\theoremstyle{plain}
\newtheorem{theorem}{Theorem}[section]
\newtheorem{lemma}[theorem]{Lemma}
\newtheorem{proposition}[theorem]{Proposition}
\newtheorem{corollary}[theorem]{Corollary}
\theoremstyle{definition}
\newtheorem{definition}[theorem]{Definition}
\newtheorem{remark}[theorem]{Remark}
\theoremstyle{remark}
\newcommand{\res}{\mathop{\hbox{\vrule height 7pt width .5pt depth 0pt
\vrule height .5pt width 6pt depth 0pt}}\nolimits}
\newcommand{\R}{\mathbb{R}}
\newcommand{\N}{\mathbb{N}}
\newcommand{\LL}{\mathcal{L}}
\newcommand{\HH}{\mathcal{H}}
\newcommand{\Mn}{\mathbb{M}_{sym}^{n \times n}}
\newcommand{\MD}{\mathbb{M}^{n \times n}_D}
\newcommand{\wto}{\rightharpoonup}
\newcommand{\C}{\mathbf{C}}
\newcommand{\K}{\mathbf{K}}
\newcommand{\e}{\varepsilon}
\newcommand{\A}{\mathbf{A}}
\newcommand{\Acal}{\mathcal A}
\DeclareMathOperator{\Div}{div}
\DeclareMathOperator*{\esssup}{ess\, sup}
\newcommand{\Om}{\Omega}
\newcommand{\pscal}[2]{\langle #1, #2 \rangle}
\newcommand{\hs}{\mathcal{H}}
\definecolor{verde}{RGB}{20,150,100}
\mathchardef\emptyset="001F
\numberwithin{equation}{section}
\title[Spatial regularity for general yield criteria in  perfect plasticity]
{Spatial regularity for general yield criteria in dynamic and quasi-static perfect plasticity}
\author[J.-F. Babadjian, A. Giacomini and M.G. Mora]{Jean-Fran\c cois Babadjian, Alessandro Giacomini and Maria Giovanna Mora}
\address[J.-F. Babadjian]{Universit\'e Paris-Saclay, CNRS,  Laboratoire de math\'ematiques d'Orsay, 91405, Orsay, France.}
\email{jean-francois.babadjian@universite-paris-saclay.fr}
\address[A.~Giacomini]{DICATAM, Sezione di Matematica, Universit`a degli Studi di Brescia,
Via Branze 43, 25133 Brescia, Italy}
\email{alessandro.giacomini@unibs.it}
\address[M.G.~Mora]{Dipartimento di Matematica, Universit\`a di Pavia, 
Via Ferrata 5, 27100 Pavia, Italy}
\email{mariagiovanna.mora@unipv.it}
\keywords{}
\begin{document}

\begin{abstract} 
This work addresses the question of regularity of solutions to evolutionary (quasi-static and dynamic) perfect plasticity models. Under the assumption that the elasticity set is a compact convex subset of  deviatoric matrices, with $C^2$ boundary and positive definite second fundamental form, it is proved that the Cauchy stress admits spatial partial derivatives that are locally square integrable. In the dynamic case, a similar regularity result is established for the velocity as well. In the latter case, one-dimensional counterexamples show that, although solutions are Sobolev in the interior of the domain, singularities may appear at the boundary and the Dirichlet condition may fail to be attained.
\end{abstract}
\maketitle

\tableofcontents

\section{Introduction}

Plasticity ($\pi\lambda\alpha\sigma\sigma\varepsilon\iota\nu$) is a typical inelastic behavior giving rise to permanent deformations inside materials. Beyond the reversible nature of elasticity, plasticity allows to mold, or give a shape, to a solid body. At the mesoscale, for polycrystalline solids, it turns out that deformations are highly localized due to slips along preferred directions. Those so-called plastic slips are due to the presence of defects (called dislocations) in the atomistic structure. From the macroscopic point of view, these singularities are due to a bad behavior of the dissipation energy (involving the plastic strain rate) which has a linear growth at infinity, and thus leads to possibly singular plastic deformations of measure type. One particular feature of plasticity models is therefore the creation of singularities due to concentration of deformations.\medskip

The model of small strain perfect plasticity can be described as follows (see, e.g., \cite{Lubliner}). Let $\Om \subset \R^n$ be a smooth bounded domain representing the reference configuration of an elasto-plastic body whose boundary $\partial \Om$ is partitioned into the disjoint union of a Dirichlet part $\Gamma_D$ and a Neumann part $\Gamma_N$. Let $f:\Om \times (0,T) \to \R^n$ be an external body force, $g:\Gamma_N \times (0,T) \to \R^n$ be an external traction force, and $w:\Gamma_D \times (0,T) \to \R^n$ be a  prescribed boundary displacement. A kinematically admissible displacement field $u:\Om \times (0,T) \to \R^n$ satisfies the Dirichlet boundary condition $u=w$ on $\Gamma_D \times (0,T)$, while a statically admissible Cauchy stress tensor $\sigma:\Om \times (0,T) \to \Mn$ satisfies the Neumann boundary condition $\sigma\nu=g$ on $\Gamma_N \times (0,T)$. In addition, in a dynamical framework, the following equation of motion holds:
\begin{equation}\label{eq:1}
\ddot u -\Div \sigma=f \quad \text{ in } \Om \times (0,T), 
\end{equation}
where $\ddot u$, the second partial derivative of $u$ with respect to time, represents the acceleration. In a quasi-static setting, the motion is assumed to be slow, so that the equation of motion is replaced by the equilbrium equation $-\Div \sigma=f$ in $\Om \times (0,T)$, where the acceleration is neglected. In small strain plasticity, the linearized strain tensor $Eu:=(Du+Du^T)/2$ additively decomposes as
\begin{equation}\label{eq:2}
Eu=e+p,
\end{equation}
where $e$ and $p:\Om \times (0,T) \to \Mn$ are the elastic ad plastic strains, respectively. Given a (symmetric and coercive) fourth order Hooke's tensor $\mathbf C$, the Hooke's law
\begin{equation}\label{eq:3}
\sigma=\mathbf C e
\end{equation}
gives the stress as a linear mapping of the elastic strain. In perfect plasticity, the Cauchy stress is constrained to stay in a fixed closed and convex set $\mathbf K \subset \Mn$. When $\sigma$ lies in the interior of $\mathbf K$, the material behaves elastically and no additional plastic strain is created ($\dot p=0$). On the other hand, when $\sigma$ reaches the boundary of $\mathbf K$, a plastic flow may develop in such a way that a non-trivial (permanent) plastic strain $p$ may remain after unloading. The evolution of $p$ is described by the  Prandtl-Reuss law $\dot p \in N_{\mathbf K}(\sigma)$, where $N_{\mathbf K}(\sigma)$ is the normal cone to $\mathbf K$ at $\sigma$, or equivalently by the Hill's principle of maximum plastic work
\begin{equation}\label{eq:4}
\sigma \cdot \dot p=H(\dot p),
\end{equation}
where $H(q):=\sup_{\tau \in \mathbf K}\tau\cdot q$ is the support function of $\mathbf K$. 

The problem of dynamic perfect plasticity thus consists in finding a quadruplet $(u,\sigma,e,p):\Om \times (0,T) \to \R^n \times \Mn \times \Mn \times \Mn$ satisfying the system \eqref{eq:1}--\eqref{eq:4}, supplemented by suitable initial conditions. 

It turns out that the following energy balance holds: for all $t \in [0,T]$,
\begin{multline*}
\frac12 \int_{\Om} |\dot u(t)|^2\, dx+\frac12 \int_\Om \mathbf C e(t)\cdot e(t)\, dx + \int_0^t\int_{\Om} H(\dot p)\, dx \, ds\\
= \frac12 \int_\Om|\dot u(0)|^2\, dx+ \frac12 \int_\Om \mathbf C e(0) \cdot e(0)\, dx\\
+\int_0^t \int_{\Om} f\cdot \dot u \, dx\, ds+ \int_0^t \int_{\Gamma_N}g\cdot \dot u\, d\HH^{n-1}\, ds + \int_0^t \int_{\Gamma_D}(\sigma\nu)\cdot \dot w\, d\HH^{n-1}\, ds.
\end{multline*}
It states that the variation of total energy (kinetic, elastic and cumulated dissipation energies) equals the work of external forces. Since $H(\dot p)\sim |\dot p|$, standard energy estimates lead to an $L^1$ bound on $\dot p$, which is thus expected to be a Radon measure. The natural energy space for a well-posed theory is therefore
$$(u(t),e(t),p(t)) \in BD(\Om) \times L^2(\Om;\Mn) \times \mathcal M(\Om \cup \Gamma_D;\Mn),$$
where $BD(\Om)$ is the space of functions of bounded variations introduced in \cite{Suquet2} for that purpose and
$\mathcal M(\Om \cup \Gamma_D;\Mn)$ is the space of $\Mn$-valued bounded Radon measures on $\Om \cup \Gamma_D$. Existence results have been obtained in \cite{A2,AG,T} for the static problem, in \cite{Suquet3,A,T,DMDSM,FG} in the quasi-static setting, and in \cite{AL,BMo2} in the dynamical case. The strategy of proof consists in a suitable regularization of the original model by means of visco-elastic (Kelvin-Voigt), or visco-plastic (Perzyna), or hardening models  and/or time discretization. Existence of solutions is then deduced by establishing uniform compactness estimates in the regularization parameter and then passing to the limit.\medskip

The purpose of this work is to investigate the regularity of solutions for the model of perfect plasticity, both in the quasi-static and in the dynamic settings, for smooth enough data. Up to now, most of the available results in this direction are limited to the case of a Von Mises yield criterion for which the elasticity set is explicitly given by
\begin{equation}\label{eq:VM}
\mathbf K=\{\sigma \in \Mn : \quad |\sigma_D| \leq 1\}.
\end{equation}
Here $\sigma_D:=\sigma-\frac{{\rm tr}(\sigma)}{n} {\rm Id}$ denotes the deviatoric part of $\sigma$. The first regularity result is the one in \cite{S1}, 
where by means of a duality method it was shown that the stress tensor $\sigma$  in the static case belongs to $H^1_{loc}(\Om;\Mn)$.
An extension to the quasi-static setting was then proved in \cite{BF} using a power law approximation (see also \cite{Dem}). Variants of this result have been obtained in \cite{Dav-Mor2} for perfect elasto-plastic plates and in \cite{BMo} for a pressure-dependent Drucker-Prager plasticity model. In dynamics, the only contributions discussing regularity are \cite{GMS} and \cite{Mi}. The paper  \cite{GMS} is an extension of \cite{Dav-Mor2} to dynamic perfect elasto-plastic plates, whereas \cite{Mi} is based on a completely different method (a comparison principle similar to the Kato inequality in the context of the wave equation) which allows to establish regularity in short time for compactly supported initial data (see also \cite{BMi}). In both articles $H^1_{loc}$-regularity in space is obtained not only for the Cauchy stress $\sigma(t)$, but also for the velocity $\dot u(t)$. However, contrary to \cite{S1,BF,Dem} which only hold for the Von Mises yield criterion \eqref{eq:VM}, the result of \cite{Mi} is valid for any convex set $\mathbf K$ (possibly depending on the hydrostatic pressure).

In the present article we prove Sobolev regularity results for the Cauchy stress (and also for the velocity in the dynamic case) for a wide class of convex elasticity regions of the form
$$\mathbf K=\{\sigma \in \Mn : \quad \sigma_D \in K\},$$
where $K$ is a compact and convex subset of $\MD$ (the space of symmetric $n \times n$ deviatoric matrices) with $C^2$ boundary and positive definite second fundamental form on $\partial K$, containing $0$ as an interior point. This type of yield criterion, which is invariant in the direction of hydrostatic matrices, applies to most of metals and alloys for which the influence of the mean stress on yielding is in general negligible. In such a case, the plastic strain $p$ becomes a deviatoric measure. Materials obeying this kind of law do not develop permanent volumetric changes and the displacement field admits only tangential discontinuities.

The case of the Von Mises yield criterion \eqref{eq:VM} appears as a particular case when $K$ is the closed unit ball of $\MD$. Our results also applies to anisotropic generalizations of the Von Mises criterion, due to Hill, where
$$K=\{\sigma \in \MD : \quad (\mathbf B \sigma)\cdot\sigma \leq 1\},$$
and $\mathbf B\in \mathcal L(\MD,\MD)$ is a fourth order self-adjoint and coercive tensor (see \cite{Lubliner}). Another example, which is covered by our result, is the Hosford criterion for which
$$
K=\bigg\{\sigma \in \MD : \quad \sum_{1 \leq i <j \leq n}|\sigma_i-\sigma_j|^p \leq 1\bigg\},
$$
where $\sigma_1,\ldots,\sigma_n$ are the eigenvalues of $\sigma$ and $p\geq 2$ (see Appendix, Proposition~\ref{prop:Hosford}). For $p=\infty$ the Hosford criterion reduces to the so-called Tresca criterion which instead does not fit within our framework. To our knowledge, it remains an open question to know whether the regularity of $\sigma$ (and $\dot u$ in the dynamic case) still holds in that case. However, as we mentioned earlier, at least short time regularity is ensured by the result in \cite{Mi}.\medskip

We now explain our proof strategy. For that, we simplify the model assuming the elasticity tensor $\mathbf C$ to be the identity, the body force $f$ to be zero, and the problem to be stationary. The system \eqref{eq:1}--\eqref{eq:4} thus formally simplifies to 
\begin{equation}\label{eq:minsigma}
\begin{cases}
v-\Div \sigma=0 & \text{ in }\Om,\\
Ev=\sigma+p & \text{ in }\Om,\\
p \in \partial I_{\mathbf K}(\sigma) & \text{ in }\Om,
\end{cases}
\end{equation}
where $I_{\mathbf K}$ is the indicator function of $\mathbf K$ ($I_{\mathbf K}=0$ in $\mathbf K$ and $I_{\mathbf K}=+\infty$ outside) and $\partial I_{\mathbf K}$ is the subdifferential of $I_{\mathbf K}$. This system is complemented by suitable boundary conditions that we do not consider here for simplicity. The idea of proof consists in approximating $I_{\mathbf K}$ by the power law function
$$\gamma_\alpha(\sigma):=\frac{\alpha}{\alpha+1}(1+d(\sigma)^2)^{\frac{\alpha+1}{2\alpha}},$$
where $0<\alpha \ll 1$ and $d$ denotes the distance function to $\mathbf K$ (see \cite{Tem}). The system \eqref{eq:minsigma} is then replaced by the so-called Norton-Hoff model
\begin{equation}\label{eq:minsigma2}
\begin{cases}
v_\alpha-\Div \sigma_\alpha=0 & \text{ in }\Om,\\
Ev_\alpha=\sigma_\alpha+D\gamma_\alpha(\sigma_\alpha) & \text{ in }\Om.
\end{cases}
\end{equation}
Elliptic regularity estimates show that solutions $(v_\alpha,\sigma_\alpha)$ exist and belong to $H^1(\Om;\R^n) \times H^1(\Om;\Mn)$. In order to control the spatial derivatives of $v_\alpha$ and $\sigma_\alpha$ uniformly with respect to $\alpha$, we formally take the partial derivative of the equations in \eqref{eq:minsigma2} with respect to the $x_k$ variable and test the first equation
with $\psi \partial_k v_\alpha$ and the second one with $\psi \partial_k \sigma_\alpha$ (here $\psi$ is a suitable cut-off function which is needed to avoid boundary terms). One is then faced with the application of the chain rule in Sobolev spaces to differentiate the nonlinear term $D\gamma_\alpha(\sigma_\alpha)$, which would require $D\gamma_\alpha$ to grow linearly with respect to its argument. Since this is not the case, we introduce a further approximation parameter $\lambda\gg 1$ and the regularized potential 
$$\gamma_{\alpha,\lambda}(\xi):= \frac{\alpha}{\alpha+1}(1+ (d^2(\xi){\,\wedge\,} \lambda^2))^{\frac{\alpha+1}{2\alpha}}
+  \frac{1}{2} (1+\lambda^2)^{\frac1{2\alpha}-\frac12} (d^2(\xi)  -\lambda^2)_+ .$$
We can now apply the previous program to the solution $(v_{\alpha,\lambda},\sigma_{\alpha,\lambda})$ of the system 
$$\begin{cases}
v_{\alpha,\lambda}-\Div \sigma_{\alpha,\lambda}=0 & \text{ in }\Om,\\
Ev_{\alpha,\lambda}=\sigma_{\alpha,\lambda}+D\gamma_{\alpha,\lambda}(\sigma_{\alpha,\lambda}) & \text{ in }\Om.
\end{cases}$$
Several integration by parts lead to the main estimate \eqref{eq:estim2}, where the obstruction to conclude the argument is the presence of a term of the form 
$$\int_\Om |Ev_{\alpha,\lambda}| |\nabla (\sigma_{\alpha,\lambda})_D|\psi\, dx$$
on the right-hand side. In fact, $Ev_{\alpha,\lambda}$ is only uniformly bounded in $L^1(\Om;\Mn)$. 
We show (see \eqref{eq:ref:intro}) that the integral above can be absorbed by the coercive term
$$\int_\Om \psi \partial_k D\gamma_{\alpha,\lambda}(\sigma_{\alpha,\lambda})\cdot\partial_k \sigma_{\alpha,\lambda} \, dx$$
appearing in the left-hand side of \eqref{eq:estim2}. It is at this stage that the regularity of $\partial K$ and the sign condition on its second fundamental form are used, providing a uniform bound (with respect to the parameters $\alpha$ and $\lambda$) of $\|\nabla v_{\alpha,\lambda}\|_{L^2_{loc}(\Om)}$ and $\|\nabla \sigma_{\alpha,\lambda}\|_{L^2_{loc}(\Om)}$. Passing to the limit as $\alpha \to 0$ and $\lambda \to +\infty$ finally yields the desired $H^1_{loc}$ estimate of $v$ and $\sigma$.

This general strategy is adapted to the dynamic case leading to Sobolev regular solutions to the system \eqref{eq:1}--\eqref{eq:4}. In particular, the flow rule \eqref{eq:4}, which is usually formulated in a measure theoretical sense due to the lack of regularity of the solutions in the energy space (see \cite{KT,FG}), can now be expressed in a pointwise almost every sense. In the quasi-static case this method gives $H^1_{loc}$ regularity for the stress only (no regularity is obtained for the velocity). Using a capacitary argument and the $H^1$-quasi-continuous representative of the stress, this result allows one to give a pointwise form to the flow rule, as previously observed in \cite{FGM}.\medskip

The manuscript is organised as follows. Section~\ref{sec:prel} is devoted to notation as well as mathematical preliminary results used throughout the paper (in particular, functional spaces and basic properties of the distance function to a convex set). In Section~\ref{sec:setting} we introduce the dynamic perfect plasticity model as well as its Norton-Hoff approximation. We state and prove our first main regularity result, Theorem~\ref{prop:reg-sN2}, and derive a well-posedness result, Theorem~\ref{thm:hreg2}, of strong solutions. Section~\ref{sec:qst} concerns the quasi-static perfect plasticity model. Our second main regularity result is stated in Theorem~\ref{prop:reg-sN}, from which we deduce an existence result for quasi-static evolutions with a locally Sobolev regular Cauchy stress, Theorem~\ref{thm:hreg3}. In Section~\ref{sec:ex} we present two one-dimensional examples of dynamic solutions (in the stationary and non-stationary cases) that are smooth in the interior of the domain, but develop discontinuities at the boundary. Finally, in the Appendix we collect some supplemental material. In particular, we prove a general well-posedness result that avoids the usual time discretization method and uses instead an ODE argument based on the Cauchy-Lipschitz Theorem. Moreover, we show that the Hosford criterion for $p\geq2$ satisfies all the assumptions of our regularity results.

\section{Preliminaries}
\label{sec:prel}

\subsection{Notation}

\subsubsection{Linear algebra}
	
If $a$ and $b \in \R^n$, we write $a \cdot b:=\sum_{i=1}^n a_i b_i$ for the Euclidean scalar product, and we denote by $|a|:=\sqrt{a \cdot a}$ the corresponding norm. 
	
We denote by $\mathbb M^{n\times n}$ the set of $n \times n$ matrices and by $\Mn$ the space of symmetric $n \times n$ matrices. The set of all (deviatoric) trace free symmetric matrices will be denoted by $\MD$. The space $\mathbb M^{n\times n}$ is endowed with the Frobenius scalar product $A\cdot B:={\rm tr}(A^T B)$ and with the corresponding Frobenius norm $|A|:=\sqrt{A\cdot A}$. If $a \in \R^n$ and $b \in \R^n$, we denote by $a \odot b:=(ab^T+b^T a)/2 \in  \Mn$ their symmetric tensor product.
	
If $A \in \Mn$, there exists an orthogonal decomposition of $A$ with respect to the Frobenius scalar product as follows
$$ A = A_D + \frac{1}{n} ({\rm tr} A){\rm Id} ,$$
where $A_D \in \MD$ stands for the deviatoric part of $A$. 
	
\subsubsection{Measures}

The Lebesgue measure in $\R^n$ is denoted by $\mathcal L^n$ and the $(n-1)$-dimensional Hausdorff measure by $\mathcal H^{n-1}$. If $X \subset \R^n$ is a locally compact Borel set and $Y$ is an Euclidean space, we denote by $\mathcal M(X;Y)$ the space of $Y$-valued bounded Radon measures in $X$ endowed with the norm $\|\mu\|:=|\mu|(X)$, where $|\mu|$ is the variation of the measure $\mu$. If $Y=\R$ we simply write $\mathcal M(X)$ instead of $\mathcal M(X;\R)$. If $\mu=f \LL^n$ for some (locally integrable) function $f$, we will often identify the measure $\mu$ to its density $f$.

By the Riesz representation theorem, $\mathcal M(X;Y)$ can be identified with the dual space of $C_0(X;Y)$, the space of continuous functions $\varphi:X \to Y$ such that $\{|\varphi|\geq \varepsilon\}$ is compact for every $\varepsilon>0$. The (vague) weak* topology of $\mathcal M(X;Y)$ is defined using this duality. 
	
Let $\mu \in \mathcal M(X;Y)$ and $h:Y \to [0,+\infty]$ be a convex, positively one-homogeneous function. Using the theory of convex functions of measures developed in \cite{DT1,GS}, we introduce the nonnegative {Borel measure $h(\mu)$}, defined by 
$$h(\mu)=h\left(\frac{\mu}{|\mu|}\right)|\mu|\,,$$
where $\frac{\mu}{|\mu|}$ stands for the Radon-Nikod\'ym derivative of $\mu$ with respect to $|\mu|$.
	
\subsubsection{Functional spaces}
	
We use standard notation for Lebesgue spaces ($L^p$) and Sobolev spaces ($W^{s,p}$ and $H^s=W^{s,2}$).
	
Let $\Omega\subset\R^n$ be an open set. The space of functions of bounded deformation is defined by
$$BD(\Omega)=\{u \in L^1(\Omega;\R^n) : \; E u \in \mathcal M(\Omega;\Mn)\}\,,$$
where $E u:=(Du+Du^T)/2$ stands for the distributional symmetric gradient of $u$.  We recall (see \cite{Bab,T}) that, if $\Omega$ has a Lipschitz boundary, every function $u \in BD(\Omega)$ admits a trace, still denoted by $u$, which belongs to $L^1(\partial\Omega;\R^n)$, and such that the integration by parts formula holds: for all $\varphi \in C^1(\overline \Omega;\Mn)$,
$$\int_{\partial\Omega} u\cdot (\varphi\nu)\, d\mathcal H^{n-1}=\int_\Omega {\rm div} \varphi \cdot u\, dx + \int_\Omega \varphi \cdot d E u,$$
where $\nu$ is the outer unit normal to $\partial \Omega$.
	
Let us define 
$$H_{\Div}(\Omega;\Mn)=\{\sigma \in L^2(\Omega;\Mn) :\;  {\rm div} \sigma \in L^2(\Omega;\R^n)\}\,.$$
If $\Omega$ has Lipschitz boundary, for any $\sigma \in H_{\Div}(\Omega;\Mn)$ we can define the normal trace $\sigma\nu$ as an element of $H^{-\frac12}(\partial \Omega;\R^n)$  (cf., e.g.,\ \cite[Theorem~1.2, Chapter~1]{T}) by setting
$$\langle \sigma \nu, \psi \rangle_{H^{-\frac12}(\partial \Omega;\R^n),H^{\frac12}(\partial \Omega;\R^n)}:= \int_\Omega \psi \cdot {\rm div} \sigma \, dx + \int_\Omega \sigma \cdot  E \psi \, dx$$
for every $\psi \in H^1(\Omega;\R^n)$. 

\subsection{Banach space-valued functions}

Let $X$ be a Banach space and let $1 \leq p \leq \infty$. If $X$ is a reflexive (respectively, the dual of a separable) space, we define $L^p(0,T;X)$ as the space of all strongly (respectively, weakly*) measurable functions $f:(0,T) \to X$ such that $t \mapsto \|f(t)\|_X$ belongs to $L^p(0,T)$. According to the Riesz representation Theorem, if $1 \leq p <\infty$ and $X$ is reflexive, the dual space of $L^p(0,T;X)$ is isometrically isomorphic to $L^q(0,T;X')$ with $\frac1p+\frac1q=1$. If $X$ is the dual of a separable Banach space $Y$ and $1 \leq p <\infty$, the dual space of $L^p(0,T;Y)$ is isometrically isomorphic to $L^q(0,T;X)$ with $\frac1p+\frac1q=1$. In particular, $L^q(0,T;X)$ can be naturally endowed with a weak* topology. This result will be in particular applied to $X=BD(\Omega)$, which can be identified with the dual of a separable space (see \cite[Proposition~2.5]{ST} and also \cite[Remark~3.12]{AFP} in the $BV$ case). We refer to \cite[Section~2.3]{FL} for a general theory of Lebesgue spaces on Banach spaces. 

We further denote by $AC([0,T];X)$ the space of absolutely continuous functions $f : [0,T] \to X$. 
Let $X$ be a reflexive (respectively, the dual of a separable) space. By \cite[Appendix]{Br} (respectively, \cite[Theorem 7.1]{DMDSM}),
if $f \in AC([0,T];X)$, then the time derivative $\dot f$ exists $\mathcal L^1$-a.e.\ in $(0,T)$ with respect to the strong (respectively, weak*) convergence in $X$ and $\dot f \in L^1(0,T;X)$. We further define the Sobolev space 
$$W^{1,p}(0,T;X)=\big\{f \in AC([0,T];X) : \; \dot f \in L^p(0,T;X)\big\}.$$
We will use the notation $H^1(0,T;X)$ to denote $W^{1,2}(0,T;X)$.

\subsection{Spaces of admissible fields and duality between stress and plastic strain}

Let $\Omega \subset \R^n$ be a connected bounded open set. We consider an open subset $\Gamma_D \subset \partial\Omega$  with respect to the relative topology of $\partial \Omega$ which stands for the Dirichlet part of $\partial \Omega$. We also define $\Gamma_N=\partial\Omega \setminus \overline \Gamma_D$ which corresponds to the Neumann part of the boundary.

Given a prescribed boundary displacement $w \in H^1(\Om;\R^n)$, we will consider the following space of kinematically admissible fields. In the dynamic case we set 
\begin{multline*}
\mathcal A^{\rm dyn}_w:=\Big\{(v,\eta,q) \in [BD(\Om)\cap L^2(\Om;\R^n)] \times L^2(\Om;\Mn) \times \mathcal M(\Om \cup \Gamma_D;\MD) :\\
Ev=\eta+q \text{ in }\Om, \quad q=(w-v) \odot \nu  \HH^{n-1} \text{ on }\Gamma_D\Big\},
\end{multline*}
where $\nu$ is the outer unit normal to $\partial\Om$. In the quasistatic case the absence of the kinetic energy prevents an $L^2$ control on the displacement (or velocity), so that the previous space is replaced by
\begin{multline*}
\mathcal A^{\rm qst}_w:=\Big\{(v,\eta,q) \in BD(\Om) \times L^2(\Om;\Mn) \times \mathcal M(\Om \cup \Gamma_D;\MD) :\\
Ev=\eta+q \text{ in }\Om, \quad q=(w-v) \odot \nu  \HH^{n-1} \text{ on }\Gamma_D\Big\}.
\end{multline*}

Given a surface traction $g \in L^\infty(\Gamma_N;\R^n)$, the space of statically admissible stresses in the dynamical case is defined by
$$\mathcal S^{\rm dyn}_g :=\{ \tau \in H_{\Div}(\Om;\Mn) : \; \tau_D \in L^\infty(\Om;\MD),\, \tau\nu=g \text{ on }\Gamma_N\},$$
while in the quasistatic case
$$\mathcal S^{\rm qst}_g :=\{ \tau \in L^2(\Om;\Mn) : \; \Div \tau \in L^n(\Om;\R^n), \, \tau_D \in L^\infty(\Om;\MD),\, \tau\nu=g \text{ on }\Gamma_N\}.$$
Note that, if $\Om$ is a $C^2$ domain, then, for any $\tau \in \mathcal S^{\rm qst}_g$ (resp. $\mathcal S^{\rm dyn}_g$), the tangential part of the normal stress $(\sigma\nu)_\tau:=\sigma\nu - \langle\sigma\nu,\nu\rangle \nu \in L^\infty(\partial\Om;\R^n)$ (see \cite[Lemma 2.4]{KT} or \cite[Section~1.2]{FG}).

The duality pairing between stresses and plastic strains is {\em a priori} not well defined, since the former are only squared Lebesgue integrable, while the latter are (possibly singular) measures. Using an integration by parts formula as in \cite{KT,FG}, it is possible to give a meaning to this duality pairing in terms of a distribution, and even of a measure.

\begin{definition}\label{def:duality}
Let $w \in H^1(\Om;\R^n)$ and $g \in L^\infty(\Gamma_N;\R^n)$. For all $(u,e,p) \in \mathcal A^{\rm dyn}_w$ (resp. $ \mathcal A^{\rm qst}_w$) and all $\sigma \in \mathcal S^{\rm dyn}_g$  (resp. $ \mathcal S^{\rm qst}_g$), we define the distribution $[\sigma_D \cdot p]$ on $\R^n$ as
\begin{multline}
\label{eq:duality-distr}
\langle [\sigma_D\cdot p], \varphi\rangle = \int_\Om \varphi(w-u)\cdot\Div\sigma\, dx +\int_\Om \sigma\cdot [(w-u)\odot\nabla\varphi]\, dx\\
+\int_\Om \sigma\cdot (Ew-e)\varphi\, dx+\int_{\Gamma_N}\varphi g \cdot (u-w) \, d \HH^{n-1}
\end{multline}
for every $\varphi\in C^\infty_c(\R^n)$. 
\end{definition}

According to \cite[Theorems~6.3 \& 6.5]{FG}, $[\sigma_D\cdot p]$ extends to a bounded Radon measure in $\R^n$ with\footnote{The proof of \cite{FG} actually requires that $\Div\sigma \in L^n(\Om;\R^n)$. In the dynamic case, the same conclusion holds if we only suppose $\Div \sigma \in L^2(\Om;\R^n)$ and $u \in L^2(\Om;\R^n)$.}
$$|[\sigma_D\cdot p]| \leq \|\sigma_D\|_{L^\infty(\Om)} |p| \quad \text{ in }\mathcal M(\R^n)$$
and
\begin{equation}\label{intbyp}
\langle \sigma_D,p\rangle:=[\sigma_D\cdot p](\R^n)=\int_\Om (w-u)\cdot\Div\sigma\, dx+\int_\Om \sigma\cdot (Ew-e)\, dx+\int_{\Gamma_N} g \cdot (u-w) \, d \HH^{n-1}.
\end{equation}

\subsection{Some results concerning the distance function from a convex set} 

Let $K\subseteq \R^N$ be a closed convex set such that
\begin{equation}
\label{eq:Kballs}
B(0,r_K)\subset K\subset B(0,R_K)
\end{equation}
for some $r_K,R_K>0$. Let $d_K:\R^N \to [0,+\infty)$ and $\Pi_K:\R^N\to K$ be the distance function to $K$ and the projection onto $K$, respectively.

\begin{theorem}
\label{thm:distance}
 Let $K\subseteq \R^N$ be a closed convex set satisfying \eqref{eq:Kballs}. The following properties hold true.
\begin{itemize}
\item[(a)] The function $d_K^2$ is differentiable on $\R^N$ and for every $x\in\R^N$
$$
\nabla d_K^2(x)=2(x-\Pi_K(x)).
$$
\item[(b)] For every $x\in\R^n$
$$
(x-\Pi_K(x))\cdot x\geq d_K^2(x)\qquad\text{and}\qquad (x-\Pi_K(x))\cdot x\geq r_Kd_K(x). 
$$
\item[(c)] If $\partial K$ is of class $C^2$ and its second fundamental form is positive definite at every point of $\partial K$, then $\Pi_K$ is of class $C^1$ in $\R^N \setminus K$ and there exists $C_K>0$ such that
$$
D\Pi_K(x)v\cdot v \leq  \frac1{1+C_K d_K(x)} |v|^2 
$$
for every $x\in \R^N\setminus K$ and $v\in\R^N$.
\end{itemize}
\end{theorem}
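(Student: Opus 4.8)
The three parts are of increasing difficulty, so I would handle them in order. For part~(a), the standard starting point is that $d_K$ is $1$-Lipschitz and that the projection $\Pi_K$ is characterized variationally by $(x-\Pi_K(x))\cdot(y-\Pi_K(x))\le 0$ for all $y\in K$, which in particular gives single-valuedness and $1$-Lipschitz continuity of $\Pi_K$. Writing $d_K^2(x)=|x-\Pi_K(x)|^2$ and using the first-order optimality, a direct computation of the difference quotient $d_K^2(x+h)-d_K^2(x)$, bracketing it between $2(x-\Pi_K(x))\cdot h$ plus terms controlled by $|h|^2$ (via the obtuse-angle inequality applied at both $x$ and $x+h$), shows differentiability with $\nabla d_K^2(x)=2(x-\Pi_K(x))$; this is classical (Moreau). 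For part~(b), the first inequality follows from the obtuse-angle condition with the choice $y=0\in K$: $(x-\Pi_K(x))\cdot(0-\Pi_K(x))\le 0$, i.e. $(x-\Pi_K(x))\cdot\Pi_K(x)\ge 0$, whence $(x-\Pi_K(x))\cdot x\ge(x-\Pi_K(x))\cdot(x-\Pi_K(x))=d_K^2(x)$. For the second, I use that $B(0,r_K)\subset K$, so $\Pi_K(x)-r_K\frac{x-\Pi_K(x)}{|x-\Pi_K(x)|}\in K$ (when $x\notin K$), and feed this point into the obtuse-angle inequality to get $(x-\Pi_K(x))\cdot x\ge(x-\Pi_K(x))\cdot\Pi_K(x)+r_K|x-\Pi_K(x)|\ge r_K d_K(x)$; the case $x\in K$ is trivial since both sides vanish.

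Part~(c) is the substantive one and where the curvature hypothesis enters. The plan is to fix $x\in\R^N\setminus K$, set $\bar x=\Pi_K(x)$, $\delta=d_K(x)=|x-\bar x|$, and use that under the $C^2$ assumption the nearest-point map extends the inner unit normal: $x=\bar x+\delta\,n(\bar x)$, where $n$ is the Gauss map of $\partial K$. Because the second fundamental form $\mathrm{I\!I}$ is positive definite, $\partial K$ is $C^2$ and the normal exponential/tubular-neighborhood structure shows $\Pi_K$ is $C^1$ on $\R^N\setminus K$ (the normal lines do not cross for $\delta>0$, as $K$ is convex). To estimate $D\Pi_K(x)$, differentiate the identity $x=\Pi_K(x)+d_K(x)\,n(\Pi_K(x))$: writing $P(\bar x)$ for the tangent projection at $\bar x$ and using $\nabla d_K(x)=n(\bar x)$, one obtains on the tangent space $T_{\bar x}\partial K$ the relation $\mathrm{Id}=(\mathrm{Id}+\delta\,S(\bar x))\,D\Pi_K(x)\big|_{T}$, where $S(\bar x)=Dn(\bar x)$ is the shape operator (whose quadratic form is $\mathrm{I\!I}$), while in the normal direction $n(\bar x)$ one gets $D\Pi_K(x)\,n(\bar x)=0$. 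Hence $D\Pi_K(x)=(\mathrm{Id}+\delta S(\bar x))^{-1}P(\bar x)$, and for any $v$,
\[
D\Pi_K(x)v\cdot v=(\mathrm{Id}+\delta S(\bar x))^{-1}P(\bar x)v\cdot v\le\frac{|P(\bar x)v|^2}{1+\delta\,c_K}\le\frac{|v|^2}{1+c_K d_K(x)},
\]
where $c_K>0$ is a lower bound for the eigenvalues of $S$ over the compact $\partial K$ (existence of such a bound uses compactness of $\partial K$ and continuity of $\mathrm{I\!I}$). This is the claimed inequality with $C_K=c_K$.

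The main obstacle is making the formal differentiation in part~(c) rigorous: one must justify that $\Pi_K$ is genuinely $C^1$ outside $K$, that $x\mapsto\Pi_K(x)$ hits $\partial K$ with $x-\Pi_K(x)$ parallel to the outward normal, and that the Jacobian decomposes with respect to the orthogonal splitting $\R^N=T_{\bar x}\partial K\oplus\R n(\bar x)$ exactly as above; the cleanest route is to invoke the implicit function theorem applied to the map $(\bar y,t)\mapsto\bar y+t\,n(\bar y)$ from $\partial K\times(0,\infty)$ to $\R^N\setminus K$, whose differential is invertible precisely because $\mathrm{Id}+tS(\bar y)$ is invertible for $t>0$ when $S\ge0$ — and strictly so, with the quantitative bound, when $S$ is positive definite. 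Once that diffeomorphism is in place, the eigenvalue estimate on $(\mathrm{Id}+\delta S)^{-1}$ is immediate. I would also remark that only the lower bound on $\mathrm{I\!I}$ (positive definiteness, uniform by compactness) is used for the stated inequality; no upper curvature bound is needed.
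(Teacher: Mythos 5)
Your parts (a) and (c) are fine and follow essentially the same route as the paper: (a) is Moreau's classical result, and in (c) you rederive, via the tubular-neighbourhood/implicit-function-theorem argument, the identity $D\Pi_K(x)=(\mathrm{Id}+d_K(x)\,S(\Pi_K(x)))^{-1}(\mathrm{Id}-\nu\otimes\nu)$ that the paper simply quotes from Holmes \cite{Hol}, and then use exactly the paper's eigenvalue estimate (uniform positive definiteness of the second fundamental form on the compact $\partial K$, plus $(\mathrm{Id}+\delta S)^{-1}\zeta\cdot\zeta$-type bounds and $|Pv|\le|v|$). The only point to spell out there is that $(\mathrm{Id}+\delta S)^{-1}Pv$ is tangent, so its scalar product with $v$ equals that with $Pv$; after that the bound follows from $\zeta\cdot(\mathrm{Id}+\delta S)\zeta\ge(1+\delta C_K)|\zeta|^2$ and Cauchy--Schwarz, as in the paper.

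There is, however, a genuine error in your proof of the second inequality in (b). The test point you feed into the variational characterization, $y=\Pi_K(x)-r_K\frac{x-\Pi_K(x)}{|x-\Pi_K(x)|}$, need not belong to $K$: hypothesis \eqref{eq:Kballs} puts a ball of radius $r_K$ around the origin, not around the boundary point $\Pi_K(x)$, and the inclusion you invoke amounts to a uniform interior-ball condition at every boundary point. It fails, for instance, for $K=\mathrm{conv}\big(B(0,r_K)\cup\{(R,0)\}\big)\subset\R^2$ with $R>\sqrt{2}\,r_K$, taking $\Pi_K(x)=(R,0)$ and $u=\frac{x-\Pi_K(x)}{|x-\Pi_K(x)|}$ an extreme normal at that vertex. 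Moreover, even granting $y\in K$, the obtuse-angle inequality $(x-\Pi_K(x))\cdot(y-\Pi_K(x))\le 0$ with this $y$ only yields $-r_Kd_K(x)\le 0$, which is vacuous; and the intermediate inequality you assert, $(x-\Pi_K(x))\cdot x\ge(x-\Pi_K(x))\cdot\Pi_K(x)+r_K|x-\Pi_K(x)|$, is equivalent (since the left side minus the first term on the right equals $d_K^2(x)$) to $d_K(x)\ge r_K$, which is false for $x$ close to $K$. The correct choice, which is what the paper does, is to test with $r_K\frac{x-\Pi_K(x)}{|x-\Pi_K(x)|}\in B(0,r_K)\subset K$: then $(x-\Pi_K(x))\cdot\big(x-r_K\frac{x-\Pi_K(x)}{|x-\Pi_K(x)|}\big)\ge 0$ gives directly $(x-\Pi_K(x))\cdot x\ge r_K d_K(x)$ for $x\notin K$, the case $x\in K$ being trivial.
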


\begin{proof}
Point (a) is standard (see, e.g., \cite[pp. 286]{Moreau}). Concerning point (b), we have
$$
(x-\Pi_K(x))\cdot x=|x-\Pi_K(x)|^2+(x-\Pi_K(x))\cdot \Pi_K(x)\ge |x-\Pi_K(x)|^2=d_K^2(x)
$$
since in view of the projection property, as $0\in K$,
$$
-(x-\Pi_K(x))\cdot \Pi_K(x)=(x-\Pi_K(x))\cdot (0-\Pi_K(x))\le 0.
$$
Concerning the second inequality, we may assume $x\not\in K$. Then by \eqref{eq:Kballs}
$$
r_K\frac{x-\Pi_K(x)}{|x-\Pi_K(x)|} \in K,
$$
and thus the properties of the projection imply that
$$
0\leq (x-\Pi_K(x))\cdot\Big(x-r_K \frac{x-\Pi_K(x)}{|x-\Pi_K(x)|}\Big)\\
=(x-\Pi_K(x))\cdot x-r_K  d_K(x),
$$
so that the second inequality follows.
\par
Let us come to point (c).
If  the boundary of $K$ is of class $C^2$, it turns out that the projection $\Pi_K$ is of class $C^1$ in $\R^N\setminus K$ with
$$
D\Pi_K(x) = \big(I +d_K(x) A(\Pi_K(x))\big)^{-1}\big({\rm Id}-\nu_K(\Pi_K(x))\otimes\nu_K(\Pi_K(x))\big)
$$
for every $x\in\R^N\setminus K$, where $\nu_K$ is the outer unit normal to $\partial K$ and $A=D\nu_K$ is the second fundamental form of $\partial K$ (see, e.g., \cite{Hol}). If $A$ is positive definite, we get
\begin{equation}\label{assK}
A(x)v \cdot v \geq C_K |v |^2   \quad \text{ for every $x\in \partial K$ and $v\in T_x(\partial K),$}
\end{equation}
where $T_x(\partial K)$ is the tangent space to $\partial K$ at $x$. Now let us fix $x\in\R^N\setminus K$ and for simplicity of notation let us set
$$
S:=d_K(x) A(\Pi_K(x)) \qquad\text{and}\qquad Q:={\rm Id}-\nu_K(\Pi_K(x))\otimes\nu_K(\Pi_K(x)).
$$ 
Note that $Q$ is the orthogonal projection onto the tangent space to $\partial K$ at $\Pi_K (x)$. For every $v\in\R^N$ we deduce that
\begin{equation}\label{part1}
D\Pi_K(x)v\cdot v=({\rm Id}+S)^{-1}Qv\cdot v\leq |({\rm Id}+S)^{-1}Qv|\, |v|.
\end{equation}
Setting $\zeta:=({\rm Id}+S)^{-1}Qv$ and using \eqref{assK}, we have
$$
|Qv|=|({\rm Id}+S)\zeta|\geq (1+C_K d_K(x))|\zeta|.
$$
This can be rewritten as
$$
|({\rm Id}+S)^{-1}Qv|\leq \frac1{1+C_K d_K(x)} |Qv|, 
$$
which, together with \eqref{part1}, implies the inequality of point (c).
\end{proof}

By composition, we easily infer that the following result holds true.

\begin{corollary}
\label{cor:ineq}
Let $K\subseteq \R^N$ be a closed convex set satisfying \eqref{eq:Kballs}, $\Phi:\R\to \R$ be an increasing function, and let us set $\Psi(x):=\Phi(d_K^2(x))$. 
The following properties hold true.
\begin{itemize}
\item[(a)] If $\Phi$ is of class $C^1$, then for every $x\in \R^N$
$$
\nabla \Psi(x)=2\Phi'(d_K^2(x))(x-\Pi_K(x))
$$
and 
$$
\nabla \Psi(x)\cdot x\ge 2\Phi'(d_K^2(x)) d_K^2(x)\qquad\text{and}\qquad
\nabla \Psi(x)\cdot x \ge 2\Phi'(d_K^2(x)) r_K d_K(x).
$$
\item[(b)] If $\partial K$ is of class $C^2$ and $\Phi$ is of class $C^2$, then for every $x\in\R^N\setminus K$ and $v\in\R^N$
$$
D^2\Psi(x)v\cdot v=4\Phi''(d_K^2(x))[(x-\Pi_K(x))\cdot v]^2+2\Phi'(d_K^2(x))(v-D \Pi_K(x)v)\cdot v.
$$
If in addition the second fundamental form of $\partial K$ is positive definite at every point of $\partial K$, then there exists $C_K>0$ such that for every $x\in \R^N\setminus K$ and $v\in\R^N$
$$
D^2\Psi(x)v\cdot v\ge 4\Phi''(d_K^2(x))[(x-\Pi_K(x))\cdot v]^2+2\Phi'(d_K^2(x))\frac{C_Kd_K(x)}{1+C_Kd_K(x)}|v|^2.
$$
\end{itemize}
\end{corollary}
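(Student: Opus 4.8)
The plan is to derive the statement directly from Theorem~\ref{thm:distance} by the chain and product rules, the only extra structural input being that ``$\Phi$ increasing'' forces $\Phi'\ge 0$ on its domain, so that scalar inequalities survive multiplication by the factor $2\Phi'(d_K^2(x))$. For part~(a) I would start from Theorem~\ref{thm:distance}(a): $d_K^2$ is differentiable on $\R^N$ with $\nabla d_K^2(x)=2(x-\Pi_K(x))$. Since $\Phi\in C^1$, the composition $\Psi=\Phi\circ d_K^2$ is differentiable and the chain rule gives $\nabla\Psi(x)=\Phi'(d_K^2(x))\nabla d_K^2(x)=2\Phi'(d_K^2(x))(x-\Pi_K(x))$. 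Taking the scalar product with $x$ and multiplying the two estimates of Theorem~\ref{thm:distance}(b), namely $(x-\Pi_K(x))\cdot x\ge d_K^2(x)$ and $(x-\Pi_K(x))\cdot x\ge r_Kd_K(x)$, by the nonnegative factor $2\Phi'(d_K^2(x))$ then yields the two asserted inequalities.

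For part~(b) I first note that, $\partial K$ being of class $C^2$, the projection $\Pi_K$ is of class $C^1$ on $\R^N\setminus K$ (as recalled in the proof of Theorem~\ref{thm:distance}(c); this uses only the $C^2$ regularity of $\partial K$, not the sign condition), hence $\nabla d_K^2=2(\mathrm{Id}-\Pi_K)$ is $C^1$ there and $d_K^2\in C^2(\R^N\setminus K)$, so that $\Psi$ is twice differentiable on $\R^N\setminus K$ since $\Phi\in C^2$. Differentiating $\nabla\Psi(x)=2\Phi'(d_K^2(x))(x-\Pi_K(x))$ once more by the product rule and substituting $\nabla d_K^2(x)\cdot v=2(x-\Pi_K(x))\cdot v$ gives
$$D^2\Psi(x)v=4\Phi''(d_K^2(x))\bigl((x-\Pi_K(x))\cdot v\bigr)(x-\Pi_K(x))+2\Phi'(d_K^2(x))\bigl(v-D\Pi_K(x)v\bigr),$$
and pairing with $v$ produces the claimed identity. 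For the lower bound, under the additional positivity assumption Theorem~\ref{thm:distance}(c) gives $D\Pi_K(x)v\cdot v\le\frac1{1+C_Kd_K(x)}|v|^2$, whence $(v-D\Pi_K(x)v)\cdot v=|v|^2-D\Pi_K(x)v\cdot v\ge\frac{C_Kd_K(x)}{1+C_Kd_K(x)}|v|^2$; multiplying by the nonnegative factor $2\Phi'(d_K^2(x))$ and keeping the first (squared) term unchanged gives the inequality.

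All the steps are routine applications of facts already established, so I do not anticipate a genuine obstacle. The only point deserving a word of care is the $C^2$ regularity of $d_K^2$ away from $K$ used in~(b), which is immediate from the $C^1$ regularity of $\Pi_K$, together with the systematic reliance on $\Phi'\ge 0$ — precisely the reason the monotonicity of $\Phi$ is assumed.
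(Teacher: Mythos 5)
Your proposal is correct and is exactly the argument the paper intends: the corollary is stated as an immediate consequence of Theorem~\ref{thm:distance} ``by composition,'' i.e.\ chain and product rule applied to $\Psi=\Phi\circ d_K^2$, with $\Phi'\ge 0$ preserving the inequalities from points (b) and (c) of that theorem. Your added remark that the $C^1$ regularity of $\Pi_K$ off $K$ (hence the $C^2$ regularity of $d_K^2$ there) needs only the $C^2$ boundary, not the sign condition on the second fundamental form, is accurate and consistent with the paper.
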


\section{The dynamical model}\label{sec:setting}

\subsection{Setting of the problem}

In the following we describe the setting of the model of perfect elasto-plasticity.
\vskip10pt\par\noindent{\bf $(H_1)$ Elasticity tensor.}
Let $\C$ be the elasticity tensor, considered as a symmetric positive definite linear operator $\C: \Mn\to\Mn$, 
and let $\A: \Mn\to\Mn$ be its inverse $\A:=\C^{-1}$. It follows that there exist two positive constants $\alpha_\A$ and $\beta_\A$, with $\alpha_\A \leq \beta_\A$, such that
\begin{equation}\label{coercA}
\alpha_\A |\xi|^2 \leq \frac12  (\A\xi)\cdot\xi \leq \beta_\A |\xi|^2 \quad \text{ for every } \xi \in \Mn.
\end{equation}

\vskip10pt
\noindent{\bf $(H_2)$ Reference configuration.} Let $\Om\subset \R^n$ be a bounded open set with Lipschitz boundary. We will consider a decomposition of the boundary in the form
$$
\partial\Om:=\Gamma_D \cup \Gamma_N\cup \Gamma,
$$
where $\Gamma_D$ and $\Gamma_N$ are relatively open with relative boundary given by $\Gamma$. We assume that there exists an open set $V$ containing $\Gamma$ such that $V \cap \partial \Om$ is an $(n-1)$-dimensional submanifold of $\R^n$ of class $C^2$, and that $\Gamma$ itself is an $(n-2)$-dimensional $C^2$ submanifold of $\R^n$.

\vskip10pt
\noindent{\bf $(H_3)$ Yield region.} Let $\K$ be a closed convex set in $\Mn$ of the form 
\begin{equation}
\label{eq:defK}
\K:=K+ \R\, {\rm Id},
\end{equation}
where $K$ is a closed convex subset of $\MD$. We assume that there exist two positive constants $r_K$ and $R_K$, with $r_K \leq R_K$, such that
\begin{equation} \label{boundK}
B( 0, r_K) \subset K \subset B( 0, R_K).
\end{equation}
We denote by $H:\MD \to [0,+\infty)$ the support function of $K \subset \MD$  defined by
$$H(q):=\sup_{\tau \in K} \tau \cdot q,$$
which is a convex and positively one-homogeneous function satisfying by \eqref{boundK}
$$r_K |q| \leq H(q) \leq R_K |q| \quad \text{ for all }q \in \MD.$$
\vskip10pt
\noindent{\bf $(H_4)$ Prescribed displacements.} We will consider a time-dependent boundary displacement $t \mapsto w(t)$ which is the trace on $\Gamma_D$ of a function $w(t)$ with
\begin{equation}
\label{eq:w}
w\in H^2(0,T;H^1(\Omega;\R^n)) \cap H^3(0,T;L^2(\Omega;\R^n)).
\end{equation}
\vskip10pt
\noindent{\bf $(H_5)$ External loads.} We consider time-dependent external loads made of body forces $t \mapsto f(t)$ and traction forces $t \mapsto g(t)$ applied on $\Gamma_N$ with
$$
f \in W^{1,\infty}(0,T;L^2(\Om;\R^n))\qquad\text{and}\qquad g\in W^{1,\infty}(0,T;L^\infty(\Gamma_N;\R^n))
$$
For every $v \in H^1_{\Gamma_D}(\Om;\R^n)$ (the subspace of $H^1(\Om;\R^n)$ made of all function $v \in H^1(\Om;\R^n)$ such that $v=0$ on $\Gamma_D$ in the sense of traces), we define
$$
\pscal{\LL(t)}{v}:=\int_\Om f(t)\cdot v\,dx+\int_{\Gamma_N}g(t)\cdot v\,d\hs^{n-1}.
$$
We assume that the loads can be represented by a potential
\begin{equation}
\label{eq:rhot}
\rho\in H^2(0,T;L^2(\Om ;\Mn))
\end{equation}
with
\begin{equation}
\label{eq:rhotD}
\rho_D\in W^{2,\infty}(0,T;L^\infty(\Om ;\MD)),
\end{equation}
in the following sense: 
\begin{equation}
\label{eq:LL}
\pscal{\LL(t)}{v}=\int_\Om \rho(t)\cdot Ev\,dx \quad \text{ for every }v\in H^1_{\Gamma_D}(\Om;\R^n),
\end{equation}
i.e., $\rho\in W^{1,\infty}(0,T;H_{\Div}(\Om;\Mn))$ and satisfies
$$
\begin{cases}
-{\rm div}\rho(t)=f(t)&\text{ in $L^2(\Om;\R^n)$,}\\
\rho(t)\nu=g(t) &\text{ in }L^\infty(\Gamma_N;\R^n).
\end{cases}
$$
We further assume that it satisfies the uniform safe load condition: for all $t \in [0,T]$
\begin{equation}
\label{eq:rhoK}
r_K-\|\rho_D(t)\|_{L^\infty(\Om)} \ge c>0,
\end{equation}
where $r_K$ is the constant appearing in \eqref{boundK}.

\vskip10pt

\noindent{\bf $(H_6)$ Initial conditions.} 
Let $\sigma_0$ be an initial stress such that
\begin{equation}
\label{eq:sigma0S1}
\sigma_0\in H_{\Div}(\Om;\Mn),\qquad \sigma_0(x)\in \K \text{ for a.e. $x\in \Om$,}
\end{equation}
and
\begin{equation}
\label{eq:sigma0S2}
\begin{cases}
-\Div \sigma_0=f(0)&\text{in }\Om,\\
\sigma_0\nu=g(0)&\text{on }\Gamma_N.
\end{cases}
\end{equation}
In the dynamical case, we also consider an initial velocity $v_0\in H^1(\Omega;\R^n)$ satisfying $v_0=\dot w(0)$ on~$\Gamma_D$. Note that this last condition is irrelevant in the quasistatic case.

\par

\subsection{Norton-Hoff in dynamical perfect plasticity}
Let $\K=K+ \R\, {\rm Id}$ be the yield region defined in \eqref{eq:defK}. For $\xi\in\Mn$ we define 
$$
d(\xi):=d_\K(\xi)=d_K(\xi_D),
$$
where $\xi=\xi_D +\frac{{\rm tr}(\xi)}{n}{\rm Id}$ is the orthogonal decomposition of $\xi$ with $\xi_D \in \MD$. 
We have clearly
$$|\xi_D|-R_K\le d(\xi)\le |\xi_D|.$$
Given $\alpha\in(0,1]$ and $\lambda>0$, we consider the $C^1$ function on $\gamma_{\alpha,\lambda}:\Mn \to \R$ given by
$$\gamma_{\alpha,\lambda}(\xi):= \frac{\alpha}{\alpha+1}(1+ (d^2(\xi){\,\wedge\,} \lambda^2))^{\frac{\alpha+1}{2\alpha}}
+  \frac{1}{2} (1+\lambda^2)^{\frac1{2\alpha}-\frac12} (d^2(\xi)  -\lambda^2)_+ .$$
Note that $\gamma_{\alpha,\lambda}$ is a convex function, since it is an increasing and convex function of $d^2(\xi)$.
We collect some useful inequalities involving the derivatives of $\gamma_{\alpha,\lambda}$ that will be employed in several estimates and are consequences of Theorem~\ref{thm:distance} and Corollary \ref{cor:ineq}.
\begin{itemize}
\item[(a)] The function $\gamma_{\alpha,\lambda}$ is continuously differentiable in $\Mn$ with gradient given by
\begin{equation}\label{Dgamma}
D\gamma_{\alpha,\lambda}(\xi)=
(1+ (d^2(\xi){\,\wedge\,} \lambda^2))^{\frac1{2\alpha}-\frac12}(\xi-\Pi_\K(\xi)).
\end{equation}
Since $\K$ is invariant in the direction of hydrostatic matrices ($\R\,  {\rm Id}$), it follows that $D\gamma_{\alpha,\lambda}(\xi) \in \MD$. Moreover, for every $\xi\in \Mn$ we have the inequalities
\begin{equation}
\label{eq:Dg-1}
D\gamma_{\alpha,\lambda}(\xi)\cdot \xi \geq  (1+d^2(\xi)\wedge \lambda^2)^{\frac1{2\alpha}-\frac12}d^2(\xi)  
\end{equation}
and
\begin{equation}\label{Dg-2}
D\gamma_{\alpha,\lambda}(\xi)\cdot \xi \geq r_K(1+d^2(\xi)\wedge \lambda^2)^{\frac1{2\alpha}-\frac12}d(\xi)=r_K|D\gamma_{\alpha,\lambda}(\xi)|  .
\end{equation}

\item[(b)] Let us assume in addition that $\partial K$ is of class $C^2$ and that its second fundamental form is positive definite at every point. 
In view of the cylindrical geometry of $\K$, we have that for all $\xi \in \Mn$
$$
\Pi_\K(\xi)=\Pi_K(\xi_D)+\frac{{\rm tr}(\xi)}{n}{\rm Id}.
$$
In particular, if $\eta \in \Mn$ and $\xi\in \Mn\setminus \K$
$$
D\Pi_\K(\xi)\eta=D\Pi_K(\xi_D)\eta_D+\frac{{\rm tr}(\eta)}{n}{\rm Id},
$$
so that
\begin{equation}
\label{eq:ineqprojK}
D\Pi_\K(\xi)\eta\cdot \eta\le \frac{1}{1+C_Kd(\xi)}|\eta_D|^2+\frac{1}{n}({\rm tr}(\eta))^2.
\end{equation}

\item[(c)] Let $\sigma\in H^1_{loc}(\Om;\Mn)$. Since by \eqref{Dgamma} the function $D\gamma_{\alpha,\lambda}$ is Lipschitz continuous and piecewise $C^1$, it follows from the generalized chain rule formula in Sobolev spaces \cite[Proposition 1.2]{MT} that $D\gamma_{\alpha,\lambda}(\sigma) \in H^1_{loc}(\Om;\Mn)$. Moreover, by point (b) in Corollary~\ref{cor:ineq} we conclude that for every $k=1,\dots,n$
\begin{equation}\label{eq:formD2gamma2}
\partial_k \big(D\gamma_{\alpha,\lambda}(\sigma)\big)\cdot \partial_k\sigma\geq  
(1+d^2(\sigma)\wedge \lambda^2)^{\frac{1}{2\alpha}-\frac12}\frac{C_Kd(\sigma)|\partial_k\sigma_D|^2}{1+C_K d(\sigma)} \qquad\text{a.e.\ in $\Om$.}
\end{equation}
\end{itemize}

In the following dynamic evolution problems, we set the mass density of the material to be identically equal to $1$. In the next theorem we consider a so-called ``Norton-Hoff'' approximation model and we prove its well-posedness and some regularity results.

\begin{theorem}[\bf Norton-Hoff approximation]
\label{thmNH-dyn}
Assume hypotheses $(H_1)$--$(H_6)$.  Then for every $\alpha\in(0,1]$ and $\lambda>0$, there exists a unique pair $(\sigma^{\alpha,\lambda}, v^{\alpha,\lambda})$ with
\begin{equation}\label{eq:sigmaal}
\sigma^{\alpha,\lambda} \in W^{1,\infty}(0,T;L^2(\Omega;\Mn)) \cap L^\infty(0,T;H_{\Div}(\Omega;\Mn)),
\end{equation}
and
\begin{equation}\label{eq:val}
v^{\alpha,\lambda} \in W^{1,\infty}(0,T;L^2(\Omega;\R^n)) \cap L^\infty(0,T;H^1(\Omega;\R^n)), 
\end{equation}
that solves the system
\begin{equation}\label{NH-pb2}
\begin{cases}
\A\dot\sigma^{\alpha,\lambda}+D\gamma_{\alpha,\lambda}(\sigma^{\alpha,\lambda})=Ev^{\alpha,\lambda} & \text{ in }\Omega \times (0,T),
\\
\dot v^{\alpha,\lambda} -\Div\sigma^{\alpha,\lambda}=f  & \text{ in }\Omega \times (0,T),
\\
v^{\alpha,\lambda}=\dot w & \text{ on }\Gamma_D \times (0,T),
\\
\sigma^{\alpha,\lambda}\nu=g & \text{ on }\Gamma_N \times (0,T),
\\
(\sigma^{\alpha,\lambda}(0),v^{\alpha,\lambda}(0))=(\sigma_0,v_0) & \text{ in }\Omega.
\end{cases}
\end{equation}
Moreover the following estimates hold:
\begin{equation}\label{est1-sN2}
\|\sigma^{\alpha,\lambda}\|_{W^{1,\infty}(0,T;L^2(\Omega))}\leq C, \qquad  \|v^{\alpha,\lambda}\|_{W^{1,\infty}(0,T;L^2(\Omega))}\leq C,
\end{equation}
\begin{equation}\label{est1-vN2}
\|v^{\alpha,\lambda}\|_{L^\infty(0,T;BD(\Omega))} \leq C,
\end{equation}
\begin{equation}\label{est2-sN2}
\sup_{t\in[0,T]}\int_\Omega \big(1+d^2(\sigma^{\alpha,\lambda}(t))\wedge \lambda^2)\big)^{\frac1{2\alpha}-\frac12} d^\ell(\sigma^{\alpha,\lambda}(t))\, dx \leq C \quad \text{ for } \ell=1,2,
\end{equation}
\begin{equation}\label{eq:energy-bound}
\sup_{t \in [0,T]}\int_\Om \gamma_{\alpha,\lambda}(\sigma^{\alpha,\lambda}(t))\, dx \leq C,
\end{equation}
and
\begin{equation}\label{est2-uni2}
\sup_{t\in[0,T]} \int_\Omega \big(1+d^2(\sigma^{\alpha,\lambda}(t))\wedge \lambda^2)\big)^{\frac1{2\alpha}+\frac12} \, dx \leq C\Big(1+\frac1\alpha\Big),
\end{equation}
where $C>0$ is a constant independent of $\alpha$ and $\lambda$.
\end{theorem}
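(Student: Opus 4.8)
The plan is to follow the classical scheme for the Norton–Hoff (power-law) approximation: solve the regularized system via an abstract evolution argument, then extract a priori estimates that are uniform in $\alpha,\lambda$ by testing the equations with the natural quantities suggested by the energy. For existence and uniqueness of $(\sigma^{\alpha,\lambda},v^{\alpha,\lambda})$ with the stated regularity \eqref{eq:sigmaal}--\eqref{eq:val}, I would observe that, for fixed $\alpha,\lambda$, the map $\xi \mapsto D\gamma_{\alpha,\lambda}(\xi)$ is globally Lipschitz (by \eqref{Dgamma}, since $1+(d^2\wedge\lambda^2)$ is bounded and $\xi-\Pi_\K(\xi)$ is $1$-Lipschitz) and monotone (as the gradient of a convex function). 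Hence the system \eqref{NH-pb2}, after subtracting off a lift of the boundary datum $w$ and the load potential $\rho$, becomes an ODE in the Hilbert space $L^2(\Om;\Mn)\times L^2(\Om;\R^n)$ of the form $\dot U + \mathcal{B}(U) = F(t)$, where $\mathcal B$ is maximal monotone (the linear wave/elastodynamics part) perturbed by the Lipschitz term. One can invoke the Cauchy–Lipschitz theorem in Banach spaces (as the authors announce they do in the Appendix) together with the compatibility of the initial data $(\sigma_0,v_0)$ guaranteed by $(H_6)$; differentiating the equation in time and using $w\in H^3(0,T;L^2)$, $f\in W^{1,\infty}(0,T;L^2)$ gives the $W^{1,\infty}(0,T;L^2)$ bounds, and the elliptic/div estimate for $\sigma^{\alpha,\lambda}$ coming from the second equation gives $L^\infty(0,T;H_{\Div})$; the symmetric gradient bound for $v^{\alpha,\lambda}$ comes from inverting the first equation, $Ev^{\alpha,\lambda}=\A\dot\sigma^{\alpha,\lambda}+D\gamma_{\alpha,\lambda}(\sigma^{\alpha,\lambda})\in L^\infty(0,T;L^2)$, plus Korn's inequality and the Dirichlet trace $v^{\alpha,\lambda}=\dot w$.

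**The energy estimate.** For the uniform bounds \eqref{est1-sN2}--\eqref{eq:energy-bound}, the key is the natural energy identity. Pair the first equation of \eqref{NH-pb2} with $\dot\sigma^{\alpha,\lambda}$ (well-defined since $\dot\sigma^{\alpha,\lambda}\in L^2$) and the second with $v^{\alpha,\lambda}-\dot w$; adding and using the integration-by-parts formula together with $\rho(t)\nu=g(t)$ on $\Gamma_N$ and $\sigma^{\alpha,\lambda}\nu=g$, the cross terms $\int_\Om Ev^{\alpha,\lambda}\cdot\dot\sigma^{\alpha,\lambda}$ and the duality $\langle \Div\sigma^{\alpha,\lambda}, v^{\alpha,\lambda}-\dot w\rangle$ combine so that one obtains
$$
\frac{d}{dt}\Big(\tfrac12\|v^{\alpha,\lambda}\|_{L^2}^2 + \tfrac12\int_\Om\A\sigma^{\alpha,\lambda}\cdot\sigma^{\alpha,\lambda}\,dx + \int_\Om\gamma_{\alpha,\lambda}(\sigma^{\alpha,\lambda})\,dx\Big) + \int_\Om D\gamma_{\alpha,\lambda}(\sigma^{\alpha,\lambda})\cdot\sigma^{\alpha,\lambda}\,dx = (\text{work of }f,g,\dot w).
$$
Wait — more carefully: testing with $v^{\alpha,\lambda}$ rather than $\dot\sigma^{\alpha,\lambda}$ on the first equation is what produces $\gamma_{\alpha,\lambda}$; one should pair the first equation with $\dot\sigma^{\alpha,\lambda}$ to get the quadratic elastic energy, and simultaneously use the safe-load splitting $\sigma^{\alpha,\lambda}=\rho + (\sigma^{\alpha,\lambda}-\rho)$ so that $D\gamma_{\alpha,\lambda}(\sigma^{\alpha,\lambda})\cdot(\sigma^{\alpha,\lambda}-\rho)$ appears; by \eqref{Dg-2} and the safe-load condition \eqref{eq:rhoK}, $D\gamma_{\alpha,\lambda}(\sigma^{\alpha,\lambda})\cdot(\sigma^{\alpha,\lambda}-\rho) \geq (r_K-\|\rho_D\|_\infty)|D\gamma_{\alpha,\lambda}(\sigma^{\alpha,\lambda})| \geq c\,|D\gamma_{\alpha,\lambda}(\sigma^{\alpha,\lambda})| \geq 0$, which is the dissipation term and is nonnegative — this is exactly the point where the safe-load condition enters. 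Handling the boundary term $\int_{\Gamma_D}(\sigma^{\alpha,\lambda}\nu)\cdot\dot w$ requires rewriting it, via the div-theorem, in terms of $\rho$ and bulk integrals (or absorbing it using $w\in H^2(0,T;H^1)$); Grönwall then closes the estimate and yields \eqref{est1-sN2}, \eqref{est1-vN2} (with Korn), \eqref{est2-sN2} (from \eqref{eq:Dg-1}--\eqref{Dg-2} applied to the dissipation integral, integrated in time and then, using that the total energy controls the sup), and \eqref{eq:energy-bound}.

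**The remaining bound and the main obstacle.** For \eqref{est2-uni2} one uses \eqref{eq:energy-bound}: since $\gamma_{\alpha,\lambda}(\xi)\geq \tfrac{\alpha}{\alpha+1}(1+(d^2(\xi)\wedge\lambda^2))^{\frac{\alpha+1}{2\alpha}}$ and $\tfrac{\alpha+1}{2\alpha} = \tfrac1{2\alpha}+\tfrac12$, the bound \eqref{eq:energy-bound} directly gives $\int_\Om(1+(d^2(\sigma^{\alpha,\lambda})\wedge\lambda^2))^{\frac1{2\alpha}+\frac12}\,dx \leq \tfrac{\alpha+1}{\alpha}C \leq C(1+\tfrac1\alpha)$. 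I expect the main obstacle to be the bookkeeping of the boundary contributions and the load potential: keeping the safe-load term on the good side of the inequality while simultaneously controlling $\int_{\Gamma_D}(\sigma^{\alpha,\lambda}\nu)\cdot\dot w\,d\HH^{n-1}$ requires either a careful choice of test function $v^{\alpha,\lambda}-\dot W$ with a good $H^1(\Om)$ lift $W$ of $\dot w$, or repeated use of the normal-trace integration-by-parts identity from the Preliminaries; one must also be careful that $\dot\sigma^{\alpha,\lambda}$ is only $L^2$ in space-time so every pairing with it should be justified by a density/regularization argument (e.g.\ first for the time-mollified system), and that the constant produced by Grönwall depends only on the data norms in $(H_1)$--$(H_6)$ and not on $\alpha,\lambda$ — which holds precisely because the only $\alpha,\lambda$-dependent term in the energy identity, the dissipation $\int D\gamma_{\alpha,\lambda}(\sigma^{\alpha,\lambda})\cdot(\sigma^{\alpha,\lambda}-\rho)$, has a favorable sign.
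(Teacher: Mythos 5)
Your plan for existence (Lipschitz $+$ monotone perturbation, Cauchy--Lipschitz) is a reasonable substitute for the paper's citation of Suquet, and your derivation of \eqref{est2-uni2} from \eqref{eq:energy-bound} is exactly the paper's. But the core of the theorem is the \emph{uniform} estimate \eqref{est1-sN2}, i.e.\ $\|\dot\sigma^{\alpha,\lambda}\|_{L^\infty(0,T;L^2)}+\|\dot v^{\alpha,\lambda}\|_{L^\infty(0,T;L^2)}\le C$ with $C$ independent of $(\alpha,\lambda)$, and this cannot come out of the first-order energy identity you propose: pairing the first equation with $\dot\sigma$ (or $\sigma-\rho$) and the second with $v-\dot w$ controls only $\sigma,v$ in $L^\infty_t L^2_x$ and a \emph{time-integrated} dissipation, never the time derivatives pointwise in time. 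You do mention ``differentiating the equation in time'' in the existence paragraph, but there the Lipschitz constant of $D\gamma_{\alpha,\lambda}$ is used, and that constant blows up as $\alpha\to0$, $\lambda\to\infty$, so the resulting $W^{1,\infty}$ bound is not uniform. The paper's actual mechanism is: take time increments $\partial_t^h$ of the whole system, test with $\partial_t^h\sigma$ and $\partial_t^h v-\partial_t^h\dot w$, and discard the nonlinear term through the \emph{sign} $\partial_t^h D\gamma_{\alpha,\lambda}(\sigma)\cdot\partial_t^h\sigma\ge0$ (monotonicity, not Lipschitzness); the load term is then handled by splitting $\partial_t^h\rho$ into trace and deviatoric parts, using $\Div\partial_t^h v={\rm tr}(\A\,\partial_t^h\dot\sigma)+\dots$ and an integration by parts in time, which leaves a term $\sup_s\|Ev(s)\|_{L^1}$; this is in turn bounded via the safe-load condition and the equation of motion by $\varepsilon(\|\dot\sigma\|_{L^2}^2+\|\dot v\|_{L^2}^2)+C_\varepsilon(\dots)$, closing the loop with an $\varepsilon$-absorption and two applications of Gronwall. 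None of this structure is in your sketch, and your claim that Gronwall on the energy identity ``yields \eqref{est1-sN2}'' is the gap.

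A second, related problem: \eqref{est2-sN2} and \eqref{eq:energy-bound} are $\sup_{t}$ bounds, whereas your dissipation term is only integrated in time; ``the total energy controls the sup'' does not repair this, because the dissipation is not one of the conserved energy terms. Moreover, in the dynamic setting your bookkeeping of $\int_\Om(Ev-E\dot w)\cdot\dot\sigma\,dx$ via the potential $\rho$ fails: here $\Div(\sigma-\rho)=\dot v\neq0$, so the integration by parts produces $\ddot v$, which is not controlled. The paper instead obtains these bounds \emph{after} \eqref{est1-sN2}, by testing the first equation with $\sigma$ and the second with $v-\dot w$ over an arbitrary interval $(t_1,t_2)$, getting $\int_{t_1}^{t_2}\int_\Om D\gamma_{\alpha,\lambda}(\sigma)\cdot\sigma\,dx\,ds\le C(t_2-t_1)$ and hence an a.e.-in-time bound on $\int_\Om D\gamma_{\alpha,\lambda}(\sigma(t))\cdot\sigma(t)\,dx$, from which \eqref{est2-sN2} follows by \eqref{eq:Dg-1}--\eqref{Dg-2} and \eqref{eq:energy-bound} by convexity. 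You would need to restructure your argument along these lines (time-differentiate first, zeroth-order test afterwards) for the proof to go through.
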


\begin{proof}
We divide the proof into three steps.

\vskip10pt\noindent{\bf Step 1: Existence and uniqueness of a solution.}
According to \cite[Proposition 1]{Suquet}, there exists a unique pair $(\sigma^{\alpha,\lambda},v^{\alpha,\lambda})$ with the regularity \eqref{eq:sigmaal} and \eqref{eq:val} solving the system \eqref{NH-pb2}. Note that, in the statement of that result, the solution is only $H^1$ regular in time. However, a careful inspection of the proof shows that the time regularity is actually $W^{1,\infty}$ (see, e.g., \cite[page 402, line below (3.27)]{Suquet}).

From now on, in order to not overburden notation, we omit the super/subscripts $(\alpha,\lambda)$ and simply write $\sigma\equiv \sigma^{\alpha,\lambda}$, $v\equiv v^{\alpha,\lambda}$, $\gamma\equiv \gamma_{\alpha,\lambda}$. Unless otherwise stated, the letter $C$ denotes a positive constant independent of the parameters $(\alpha,\lambda)$. We also denote by $Q_t:=\Om \times (0,t)$ the space-time cylinder.

\vskip10pt\noindent{\bf Step 2: $L^2$-bounds for $(\sigma,v)$ and $(\dot\sigma,\dot v)$.} 
In order to prove \eqref{est1-sN2}, we proceed as follows. We (continuously) extend $(\sigma,v)$, $w$, $f$, $g$, and $\rho$ for negative times by setting
$$
(\sigma(t),v(t))=(\sigma_0,v_0), \quad w(t)=w(0)+t\dot w(0), \quad f(t)=f(0), \quad g(t)=g(0), \quad \rho(t)=\rho(0) 
$$
for all $t<0$. For a given function $\zeta$ defined on $(-\infty,T)$ with values in a vector space, we define the difference quotient
$$
\partial_t^h \zeta(\tau):=\frac{\zeta(\tau)-\zeta(\tau-h)}{h} \quad \text{ for }\tau<T \text{ and }h>0.
$$
By taking the increments of system \eqref{NH-pb2} we can write for a.e.\ $t\in [0,T]$
\begin{equation}
\label{eq:incremented}
\begin{cases}
\A\partial_t^h\dot \sigma(t) +\partial_t^h D\gamma(\sigma(t))=E \partial_t^hv(t) +\frac1h {\bf 1}_{(0,h)}(t) Ev_0  &  \text{ in } \Om,
\smallskip\\
\partial_t^h \dot v(t) - \Div \partial_t^h\sigma(t)=\partial_t^hf(t) &  \text{ in } \Om,
\smallskip\\
\partial_t^h v(t)=\partial_t^h \dot w(t) & \text{ on }\Gamma_D,
\smallskip\\
\partial_t^h\sigma(t)\nu=\partial_t^h g(t) & \text{ on }\Gamma_N.
\end{cases}
\end{equation}
If we test the second equation with $\partial_t^hv(t)-\partial_t^h\dot w(t) \in H^1_{\Gamma_D}(\Om;\R^n)$, integrate by parts and use \eqref{eq:LL}, we get
\begin{multline*}
\int_\Om\partial_t^h\dot v(t)\cdot \left(\partial_t^h v(t)-\partial_t^h\dot w(t)\right)\, dx+\int_\Om \partial_t^h \sigma(t)\cdot \left(E\partial_t^h v(t)-E\partial_t^h\dot w(t)\right) dx\\
=\pscal{\partial_t^h \LL(t)}{\partial_t^h v(t)-\partial_t^h\dot w(t)}=\int_\Om \partial_t^h \rho(t)\cdot \left(E\partial_t^h v(t)-E\partial_t^h\dot w(t)\right) dx.
\end{multline*}
Integrating in time on $[0,t]$ and using that $\partial_t^h v(0)=0$, we deduce 
\begin{eqnarray}
\frac{1}{2}\int_\Om |\partial_t^h v(t)|^2\,dx &=&\iint_{Q_t}\left(\partial_t^h \dot v\cdot \partial_t^h \dot w-
\left(\partial_t^h\sigma-\partial_t^h \rho\right)\cdot \left(E\partial_t^h v-E\partial_t^h\dot w\right)\right)dx \,ds \nonumber \\
&=&\int_\Om \partial_t^h v(t)\cdot \partial_t^h \dot w(t)\, dx \nonumber  \\
&&-\iint_{Q_t} \left(\partial_t^h v\cdot \partial_t^h \ddot w+\left(\partial_t^h\sigma-\partial_t^h \rho\right)\cdot\left(E\partial_t^h v-E\partial_t^h \dot w\right)\right) dx\,ds.
\label{first-v}
\end{eqnarray}
Testing the first equation in \eqref{eq:incremented} with $\partial_t^h \sigma(t)$ we get 
\begin{multline*}
\int_\Om\A \partial_t^h \dot \sigma(t)\cdot \partial_t^h\sigma(t)\, dx+\int_\Om\partial_t^h D\gamma(\sigma(t))\cdot \partial_t^h\sigma(t)\, dx\\
=\int_\Om \partial_t^h\sigma(t)\cdot E\partial_t^h v(t)\, dx+\int_\Om \frac1h {\bf 1}_{(0,h)}(t) Ev_0 \cdot \partial_t^h \sigma(t)\, dx,
\end{multline*}
from which we deduce, since $\gamma$ is a convex function,
$$
\int_\Om\A \partial_t^h \dot \sigma(t)\cdot \partial_t^h\sigma(t)\, dx \le \int_\Om\partial_t^h\sigma(t)\cdot E\partial_t^h v(t)\, dx
+\int_\Om \frac1h {\bf 1}_{(0,h)}(t) Ev_0 \cdot \partial_t^h \sigma(t)\, dx.
$$
Integrating in time on $[0,t]$ and using that $\partial_t^h\sigma(0)=0$, we get
\begin{equation}\label{first-sigma}
\frac{1}{2}\int_\Om \A \partial_t^h\sigma(t)\cdot \partial_t^h\sigma(t)\, dx \leq \iint_{Q_t}\partial_t^h\sigma\cdot E\partial_t^h v\, dx\,ds+ \frac1h \iint_{Q_t} {\bf 1}_{(0,h)}(s) Ev_0 \cdot \partial_t^h \sigma \, dx\,ds.
\end{equation}
By adding up \eqref{first-v} and \eqref{first-sigma} we get
\begin{multline*}
\frac{1}{2}\int_\Om |\partial_t^h v(t)|^2\,dx+\frac{1}{2}\int_\Om \A \partial_t^h\sigma(t)\cdot \partial_t^h\sigma(t)\, dx\\
   \le \int_\Om\partial_t^h v(t)\cdot \partial_t^h \dot w(t)\, dx+\iint_{Q_t} \partial_t^h\sigma\cdot E\partial_t^h \dot w\, dx\,ds-\iint_{Q_t} \partial_t^h v\cdot \partial_t^h \ddot w\, dx \,ds
   \\
+\iint_{Q_t} \partial_t^h \rho\cdot \left(E\partial_t^h v-E\partial_t^h \dot w\right)dx\,ds
+ \frac1h \iint_{Q_t} {\bf 1}_{(0,h)}(s) Ev_0 \cdot \partial_t^h \sigma \, dx\,ds.
\end{multline*}
Concerning the last term, we have 
\begin{multline*}
\left| 
 \frac1h \iint_{Q_t} {\bf 1}_{(0,h)}(s) Ev_0 \cdot \partial_t^h \sigma \, dx\,ds
\right|\le \frac{1}{h}\int_0^h \|Ev_0\|_{L^2(\Om)} \|\partial^h_t \sigma(s)\|_{L^2(\Om)}\,ds\\
\le \|Ev_0\|_{L^2(\Om)} \sup_{r\in[0,h]} \|\partial^h_t \sigma(r)\|_{L^2(\Om)}.
\end{multline*}
We set 
$$F^h(t):=\int_\Om |\partial_t^h v(t)|^2\,dx+\int_\Om \A \partial_t^h\sigma(t)\cdot \partial_t^h\sigma(t)\, dx,$$
so that by the Young inequality, for all $\e>0$ and for all $t \in [0,T]$
\begin{equation}\label{eq:1116}
F^h(t) \leq C_\e+C \int_0^t F^h(s)\, ds +\iint_{Q_t} \partial_t^h \rho\cdot E\partial_t^h v\, dx\, ds +\e \sup_{r \in [0,h]} F^h(r)
\end{equation}
for some constants $C>0$ and $C_\e>0$ depending on $\rho$, $w$, $\sigma_0$, and $v_0$, but independent of $(\alpha,\lambda,h)$. 

We now bound the space-time integral in the right-hand side of \eqref{eq:1116}. We note that
$$
\partial_t^h \rho\cdot E\partial_t^h v = \frac1n  {\rm tr}(\partial_t^h \rho)\Div\partial_t^h v
+ \partial_t^h \rho_D\cdot E\partial_t^h v.
$$
Since  $\partial_t^h D\gamma(\sigma(t))$ is a deviatoric matrix, the first equation of \eqref{eq:incremented} gives 
$$
\Div \partial_t^h v(t)={\rm tr}(\A \partial_t^h\dot\sigma(t))-\frac1h {\bf 1}_{(0,h)}(t) \Div v_0,
$$ 
so that, integrating by parts in time yields 
\begin{eqnarray*}
\lefteqn{\iint_{Q_t} {\rm tr}(\partial_t^h \rho(s))\Div\partial_t^h v(s)\, dx\,ds}
\\
& = & - \iint_{Q_t} {\rm tr}(\partial_t^h \dot \rho(s)){\rm tr}(\A \partial_t^h\sigma(s))\, dx\,ds
+\int_\Om {\rm tr}(\partial_t^h\rho(t)){\rm tr}(\A \partial_t^h\sigma(t))\,dx
\\
&&
 -\frac1h \iint_{Q_t} {\bf 1}_{(0,h)}(s) {\rm tr}(\partial_t^h \rho(s))\Div v_0 \, dx\,ds.
\end{eqnarray*}
By the Cauchy inequality and the assumption \eqref{eq:rhot} on $\rho$ we obtain
$$
\Big|\iint_{Q_t} {\rm tr}(\partial_t^h \rho(s))\Div\partial_t^h v(s)\, dx\,ds\Big|\leq C+ \int_0^t F^h(s)\,ds
+\frac12 F^h(t)
$$
for all $t\in[0,T]$. On the other hand using the time regularity \eqref{eq:rhotD} of $\rho_D$
\begin{eqnarray*}
\iint_{Q_t} \partial_t^h \rho_D(s)\cdot E\partial_t^h v(s)\, dx\,ds
& =& \int_0^{t-h} \int_\Om \frac{2\rho_D(s)-\rho_D(s-h)-\rho_D(s+h)}{h^2} \cdot Ev(s)\, dx\,ds\\
&& +\frac{1}{h}\int_{t-h}^t \int_\Om \frac{\rho_D(s)-\rho_D(s-h)}{h}\cdot Ev(s)\, dx\,ds \\
&& -\frac1h\int_0^h \int_\Om \frac{\rho_D(s)-\rho_D(0)}{h}\cdot Ev_0\, dx\,ds\\
& \le & C\sup_{s \in [0,t]} \int_\Om |E v(s)|\,dx,
\end{eqnarray*}
where $C>0$ depends on $\rho_D$ in view of \eqref{eq:rhotD}, but is independent of $(\alpha,\lambda,h)$. Applying the above estimates in \eqref{eq:1116} leads to
$$F^h(t) \leq C_\e+C\left(\int_0^t F^h(s)\, ds +\sup_{s \in [0,t]} \int_\Om |E v(s)|\,dx\right)+\e \sup_{r \in [0,h]} F^h(r) \quad \text{ for all }t \in [0,T].$$
By the Gronwall Lemma we infer that, for all $t \in [0,T]$,
$$F^h(t) \leq e^{C T} \bigg(C_\e+C \sup_{s \in [0,t]}\|Ev(s)\|_{L^1(\Om)} + \e \sup_{r \in [0,h]} F^h(r) \bigg).$$
Choosing $\e$ small enough leads to the following estimate
$$\sup_{s \in [0,t]}F^h(s) \leq C \bigg(1+ \sup_{s \in [0,t]}\|Ev(s)\|_{L^1(\Om)}  \bigg),$$
and letting $h\to 0$ 
\begin{equation}
\label{eq:a}
\esssup_{s \in [0,t]} \left(\|\dot\sigma(s)\|^2_{L^2(\Om)}+\|\dot v(s)\|^2_{L^2(\Om)}\right) \le C \bigg(1+\sup_{s \in [0,t]} \|E v(s)\|_{L^1(\Om)}\bigg).
\end{equation}

We next bound $Ev$ in $L^\infty(0,T;L^1(\Om;\Mn))$. Thanks to \eqref{Dg-2}, the fact that $D\gamma(\sigma)$ is a deviatoric matrix and the safe load condition \eqref{eq:rhoK}, we have 
for a.e.\ $s \in [0,t]$
\begin{multline*}
\int_\Om \left(Ev(s)-\A \dot \sigma(s)\right)\cdot \left(\sigma(s)-\rho(s)\right) dx=\int_\Om D\gamma(\sigma(s))\cdot \sigma(s)\, dx-\int_\Om D\gamma(\sigma(s))\cdot \rho(s)\, dx\\
\ge r_K \int_\Om |D\gamma(\sigma(s))|\,dx-\|\rho_D(s)\|_{L^\infty(\Om)} \int_\Om |D\gamma(\sigma(s))|\,dx
\ge c\int_\Om |D\gamma(\sigma(s))|\,dx,
\end{multline*}
where $c>0$. Since $|Ev(s)|\le |\A\dot\sigma(s)|+|D\gamma(\sigma(s))|$ we obtain
$$
\int_\Om |Ev(s)|\,dx\le \int_\Om |\A\dot\sigma(s)|\,dx+\frac{1}{c}\int_\Om \left(Ev(s)-\A \dot \sigma(s)\right)\cdot \left(\sigma(s)-\rho(s)\right) dx.
$$
In order to estimate $\pscal{\sigma(s)-\rho(s)}{Ev(s)}$, we test the second equation in \eqref{NH-pb2} with $v(s)-\dot w(s) \in H^1_{\Gamma_D}(\Om;\R^n)$ obtaining
$$
\int_\Om \dot v(s)\cdot \left(v(s)-\dot w(s)\right)\, dx+\int_\Om \sigma(s)\cdot \left(Ev(s)-E\dot w(s)\right) dx=\int_\Om \rho(s)\cdot \left(Ev(s)-E\dot w(s)\right) dx,
$$
so that
$$
\int_\Om \left(\sigma(s)-\rho(s)\right) \cdot Ev(s)\, dx=-\int_\Om \dot v(s)\left( v(s)-\dot w(s)\right) dx+\int_\Om \left(\sigma(s)-\rho(s)\right) \cdot E\dot w(s)\, dx.
$$
We conclude that for all $\e>0$
\begin{multline}
\label{eq:b}
\int_\Om |Ev(s)|\,dx\le \int_\Om |\A\dot\sigma(s)|\,dx+\frac{1}{c}\Big(-\int_\Om \A \dot \sigma(s)\cdot \left(\sigma(s)-\rho(s)\right) dx\\
-\int_\Om \dot v(s) \cdot\left(v(s)-\dot w(s)\right)dx +\int_\Om\left(\sigma(s)-\rho(s)\right)\cdot E\dot w(s)\, dx\Big)\\
\leq C_\e\left(1+\|\sigma(s)\|^2_{L^2(\Om)}+\|v(s)\|^2_{L^2(\Om)}\right)+\e\left(\|\dot \sigma(s)\|^2_{L^2(\Om)}+\|\dot v(s)\|^2_{L^2(\Om)}\right),
\end{multline}
where $C_\e>0$ is independent of $(\alpha,\lambda)$. Gathering \eqref{eq:a}, \eqref{eq:b}, and choosing $\e$ small enough, we infer
for all $t \in [0,T]$,
$$
\esssup_{s \in [0,t]}\left( \|\dot \sigma(s)\|^2_{L^2(\Om)}+\|\dot v(s)\|^2_{L^2(\Om)}\right) \le C\sup_{s \in [0,t]} \left(1+\|\sigma(s)\|^2_{L^2(\Om)}+\|v(s)\|^2_{L^2(\Om)}\right).
$$

On the other hand, for every $s\in [0,t]$
\begin{eqnarray}
\|\sigma(s)\|^2_{L^2(\Om)} & \le & \left\|\sigma_0+\int_0^s \dot \sigma(r)\,dr\right\|^2_{L^2(\Om)}\nonumber\\
& \le & 2\|\sigma_0\|^2_{L^2(\Om)}+2\left( \int_0^s \|\dot \sigma(r)\|_{L^2(\Om)}\,dr \right)^2\nonumber\\
& \le & C\left(1 + \int_0^t \|\dot \sigma(r)\|^2_{L^2(\Om)}\,dr\right)\label{eq:1505}
\end{eqnarray}
and a similar computation holds for $v$. We deduce that for a.e.\ $t \in [0,T]$
$$
 \|\dot \sigma(t)\|^2_{L^2(\Om)}+\|\dot v(t)\|^2_{L^2(\Om)}\le C\int_0^t\left(1+\| \dot \sigma(s)\|^2_{L^2(\Om)}+\|\dot v(s)\|^2_{L^2(\Om)}\right)\, ds.
$$
By the Gronwall Lemma and by \eqref{eq:1505} we finally obtain that for a.e.\ $t\in [0,T]$
\begin{equation}
\label{eq:est-L2sigma-v}
\|\dot \sigma(t)\|^2_{L^2(\Om)}+\|\dot v(t)\|^2_{L^2(\Om)}+ \| \sigma(t)\|^2_{L^2(\Om)}+\| v(t)\|^2_{L^2(\Om)}\le C,
\end{equation}
where $C>0$ is independent of $(\alpha,\lambda)$. This proves \eqref{est1-sN2}.

\vskip10pt\noindent{\bf Step 3: Further bounds.} In view of \eqref{eq:b} we deduce that
\begin{equation}
\label{eq:Ev-bound}
\sup_{s \in [0,T]} \int_\Om |Ev(s)|\,dx\le C,
\end{equation}
so that  \eqref{est1-vN2} follows by the Poincar\'e-Korn inequality and the boundary condition $v=\dot w$ on $\Gamma_D$.
\par
We now prove inequalities \eqref{est2-sN2}--\eqref{est2-uni2}. Multiplying the first equation of \eqref{NH-pb2} by $\sigma(t)$, the second equation of \eqref{NH-pb2} by $v(t)-\dot w(t) \in H^1_{\Gamma_D}(\Om;\R^n)$ and integrating by parts in space-time, we get that for all $0 \leq t_1 \leq t_2 \leq T$,
\begin{eqnarray*}
\lefteqn{\int_{t_1}^{t_2} \int_\Om D\gamma(\sigma)\cdot \sigma\, dx\,ds}
\\
 & = & \int_{t_1}^{t_2} \int_\Om\big(-\A \dot \sigma\cdot \sigma- \dot v\cdot (v-\dot w)+ (\sigma-\rho)\cdot E\dot w+\rho\cdot Ev\big)\, dx\, ds\\
 & = & \int_{t_1}^{t_2} \int_\Om\big(-\A \dot \sigma\cdot \sigma- \dot v\cdot (v-\dot w)+ (\sigma-\rho)\cdot E\dot w+\frac1n{\rm tr}(\rho)\Div v
 +\rho_D\cdot Ev \big)\, dx\, ds\\
& = & \int_{t_1}^{t_2} \int_\Om\big(-\A \dot \sigma\cdot \sigma- \dot v\cdot (v-\dot w)+ (\sigma-\rho)\cdot E\dot w+\frac1n{\rm tr}(\rho){\rm tr}(\A\dot\sigma)
 +\rho_D\cdot Ev \big)\, dx\, ds\\
& \leq & C(t_2-t_1),
 \end{eqnarray*}
where we used that $\Div v={\rm tr}(\A\dot\sigma)$ by the first equation in \eqref{NH-pb2}, the a priori estimates \eqref{eq:est-L2sigma-v} and \eqref{eq:Ev-bound}, as well as the assumptions \eqref{eq:rhot}--\eqref{eq:rhotD} on $\rho$ and \eqref{eq:w} on $w$.
We thus get
$$
\sup_{t \in [0,T]}\int_\Om D\gamma(\sigma(t))\cdot \sigma(t)\, dx \leq C.
$$
Taking into account inequalities \eqref{eq:Dg-1} and \eqref{Dg-2} for $D\gamma_{\alpha,\lambda}$ it follows that
$$\sup_{t \in [0,T]} \int_\Omega(1+ (d^2(\sigma(t)){\,\wedge\,} \lambda^2))^{\frac1{2\alpha}-\frac12}d^\ell(\sigma(t)) \, dx
\leq C \quad \text{ for } \ell=1,2,$$
while, by the convexity of $\gamma$ which gives $D\gamma(\xi)\cdot\xi \geq \gamma(\xi)-\gamma(0)$, 
$$\sup_{t\in[0,T]} \int_\Omega\gamma(\sigma(t)) \, dx \leq C+\frac{\alpha}{\alpha+1}\LL^n(\Om),$$
This inequality and the definition of $\gamma_{\alpha,\lambda}$ yield, in particular, that
$$
\sup_{t\in[0,T]} \int_\Omega \big(1+d^2(\sigma(t))\wedge \lambda^2)\big)^{\frac1{2\alpha}+\frac12} \, dx \leq C\Big(1+\frac1\alpha\Big).
$$
This concludes the proof.
\end{proof}

\begin{remark}\label{rem:energy-balance}
The solution provided by Theorem~\ref{thmNH-dyn} satisfies the following energy balance: for all $t \in [0,T]$
\begin{multline*}
\frac12 \int_\Om \mathbf A \sigma^{\alpha,\lambda}(t)\cdot \sigma^{\alpha,\lambda}(t)\, dx+\frac12 \int_{\Om} |v^{\alpha,\lambda}(t)|^2\, dx\\
 +\int_0^t\int_\Om \big( \gamma_{\alpha,\lambda}(\sigma^{\alpha,\lambda})+\gamma^*_{\alpha,\lambda}\left(D\gamma_{\alpha,\lambda}(\sigma^{\alpha,\lambda})\right) -\rho_D\cdot D\gamma_{\alpha,\lambda}(\sigma^{\alpha,\lambda}) \big)\, dx \, ds\\
= \frac12 \int_\Om \mathbf A \sigma_0\cdot \sigma_0\, dx+\frac12 \int_\Om|v_0|^2\, dx+\int_0^t\int_\Om \big( \dot v^{\alpha,\lambda}\cdot\dot w+ \rho\cdot ( \A\dot \sigma^{\alpha,\lambda}- E\dot w)+E\dot w\cdot \sigma^{\alpha,\lambda}\big)\, dx\, ds,
\end{multline*}
where $\gamma^*_{\alpha,\lambda}$ stands for the convex conjugate of $\gamma_{\alpha,\lambda}$.

Indeed, omitting again the dependence on $\alpha$ and $\lambda$, multiplying the first equation in \eqref{NH-pb2} by $\sigma$ and using the second equation we get for a.e.\ $s \in (0,T)$
\begin{eqnarray*}
\lefteqn{\int_\Om \A \dot \sigma(s)\cdot \sigma(s)\, dx+\int_\Om D\gamma(\sigma(s))\cdot \sigma(s)\, dx} \\
& = & \int_\Om\left(Ev(s)-E\dot w(s)\right)\cdot \sigma(s)\, dx+\int_\Om E\dot w(s)\cdot \sigma(s)\, dx\\
& = & -\int_\Om \dot v(s)\cdot \left(v(s)-\dot w(s)\right)dx+\pscal{\LL(s)}{v(s)-\dot w(s)}+\int_\Om E\dot w(s)\cdot \sigma(s)\, dx.
\end{eqnarray*}
The convexity and differentiability of $\gamma$ ensure that
$$D\gamma(\sigma(s))\cdot \sigma(s)=\gamma(\sigma(s))+\gamma^*(D\gamma(\sigma(s))) \quad \text{ a.e.\ in }\Om,$$
while \eqref{eq:LL} together with the fact that $D\gamma(\sigma) \in \MD$ a.e.\ in $\Om \times (0,T)$ imply that
\begin{eqnarray*}
\pscal{\LL(s)}{v(s)-\dot w(s)} & = & \int_\Om \rho(s)\cdot \left(Ev(s)-E\dot w(s)\right) dx\\
& = &\int_\Om \rho_D(s)\cdot D\gamma(\sigma(s))\, dx+\int_\Om \rho(s)\cdot \left(\A\dot \sigma(s)-E\dot w(s)\right) dx.
\end{eqnarray*}
In this way we arrive at the identity
\begin{multline*}
\int_\Om \dot v(s)\cdot v(s)\, dx +\int_\Om \A \dot \sigma(s)\cdot \sigma(s)\, dx+\int_\Om\big(\gamma(\sigma(s))+\gamma^*(D\gamma(\sigma(s)))-\rho_D(s)\cdot D\gamma(\sigma(s))\big)\,dx\\
= \int_\Om\big(\dot v(s)\cdot \dot w(s)+\rho(s)\cdot \left(\A\dot \sigma(s)- E\dot w(s)\right)+E\dot w(s)\cdot \sigma(s)\big)\, dx.
\end{multline*}
Integrating with respect to time between $0$ and $t$ yields the announced energy equality.
\end{remark}

We now prove higher spatial regularity of the solution $(\sigma^{\alpha,\lambda},v^{\alpha,\lambda})$ of the Norton-Hoff dynamic problem \eqref{NH-pb2}, with uniform estimates with respect to the parameters $\alpha$ and $\lambda$.

\begin{theorem}[\bf Higher spatial regularity]
\label{prop:reg-sN2}
In addition to the assumptions of Theorem~\ref{thmNH-dyn} suppose further that 
\begin{itemize}
\item $\partial K$ is of class $C^2$ and its second fundamental form is positive definite at every point of $\partial K$;
\item $\sigma_0\in H^1_{loc}(\Omega;\Mn)$;
\item $f\in L^2(0,T;H^1_{loc}(\Om;\R^n))$.
\end{itemize}
Then, for every open set $\omega\subset\subset\Omega$ there exists a constant $C_\omega>0$, depending on $\omega$ but independent of $(\alpha,\lambda)$, such that
\begin{equation}\label{prop-eq2}
\sup_{t\in[0,T]}\big( \|\nabla \sigma^{\alpha,\lambda}(t)\|_{L^2(\omega)}+ \|\nabla v^{\alpha,\lambda}(t)\|_{L^2(\omega)}\big) \leq C_\omega.
\end{equation}
\end{theorem}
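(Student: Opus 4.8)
The plan is to derive \eqref{prop-eq2} by an energy method, differentiating the Norton--Hoff system in space. Fix $\alpha\in(0,1]$ and $\lambda>0$ and, as in the proof of Theorem~\ref{thmNH-dyn}, drop the indices, writing $(\sigma,v)$ and $\gamma=\gamma_{\alpha,\lambda}$. For fixed parameters $D\gamma$ is globally Lipschitz and monotone, so a standard interior regularity argument --- difference quotients in the space variables, or applying \cite{Suquet} to the linear system obtained by differentiating \eqref{NH-pb2} in a space direction --- shows that $\sigma$ and $v$ are regular enough in space that all the manipulations below are legitimate; only the independence of the constants of $(\alpha,\lambda)$ is at stake. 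Fix $\omega\subset\subset\omega'\subset\subset\Om$ and a cut-off $\psi\in C^\infty_c(\Om)$ with $0\le\psi\le1$, $\psi\equiv1$ on $\omega$, $\operatorname{supp}\psi\subset\omega'$, taken of the form $\psi=\chi^2$ with $\chi\in C^\infty_c(\Om)$, and set $E_\psi(t):=\tfrac12\int_\Om\big(\A\nabla\sigma(t)\cdot\nabla\sigma(t)+|\nabla v(t)|^2\big)\psi\,dx$, so that $E_\psi(t)\ge c\big(\|\nabla\sigma(t)\|^2_{L^2(\omega)}+\|\nabla v(t)\|^2_{L^2(\omega)}\big)$ by \eqref{coercA}.

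The core computation is to differentiate the two equations of \eqref{NH-pb2} with respect to $x_k$, to test the stress equation with $\psi\,\partial_k\sigma$ and the momentum equation with $\psi\,\partial_k v$, to integrate by parts in space, and to sum over $k=1,\dots,n$; the two ``elastic'' cross terms $\pm\int_\Om\psi\,E\partial_k v\cdot\partial_k\sigma\,dx$ cancel. After several further integrations by parts, using the stress equation to re-express $E\partial_k v$, one reaches a master inequality of the form
\[
\frac{d}{dt}E_\psi(t)+\sum_{k=1}^n\int_\Om\psi\,\partial_k\big(D\gamma(\sigma)\big)\cdot\partial_k\sigma\,dx\ \le\ \int_\Om\psi\,\nabla f\cdot\nabla v\,dx+R_{\mathrm{cut}}(t)+C\int_\Om|Ev|\,|\nabla\sigma_D|\,\psi\,dx,
\]
where $R_{\mathrm{cut}}(t)$ collects the commutator terms carrying a derivative of $\psi$. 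By \eqref{eq:formD2gamma2} the dissipation term on the left is bounded below by $\sum_k\int_\Om\psi\,w\,|\partial_k\sigma_D|^2\,dx\ge0$ with the \emph{degenerate} weight $w:=\big(1+d^2(\sigma)\wedge\lambda^2\big)^{\frac1{2\alpha}-\frac12}\dfrac{C_Kd(\sigma)}{1+C_Kd(\sigma)}$; since $D\gamma$ is flat inside $\K$ and its Hessian degenerates like $d(\sigma)$ near $\partial\K$, this weighted coercivity is all that is available.

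The crux is the absorption of the critical term $\int_\Om|Ev|\,|\nabla\sigma_D|\,\psi\,dx$, in which $Ev$ is only \emph{uniformly} bounded in $L^1$ by \eqref{est1-sN2} and \eqref{eq:Ev-bound}. One cannot afford Young's inequality against a fixed weight; one uses $w$ itself. Writing $Ev=\A\dot\sigma+D\gamma(\sigma)$ and recalling $|D\gamma(\sigma)|=\big(1+d^2(\sigma)\wedge\lambda^2\big)^{\frac1{2\alpha}-\frac12}d(\sigma)$ from \eqref{Dgamma}, one has the algebraic identity
\[
\frac{|D\gamma(\sigma)|^2}{w}=\big(1+d^2(\sigma)\wedge\lambda^2\big)^{\frac1{2\alpha}-\frac12}\,\frac{d(\sigma)\big(1+C_Kd(\sigma)\big)}{C_K},
\]
whose $\psi$-integral is $\le C$ uniformly in $(\alpha,\lambda)$ by \eqref{est2-sN2} (used with both $\ell=1$ and $\ell=2$); hence $\int_\Om|D\gamma(\sigma)|\,|\nabla\sigma_D|\,\psi\le\varepsilon\sum_k\int_\Om\psi\,w\,|\partial_k\sigma_D|^2+C_\varepsilon C$, and the first term is absorbed into the dissipation. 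The remaining piece $\int_\Om|\A\dot\sigma|\,|\nabla\sigma_D|\,\psi$ is estimated by Young's inequality, its factor $\A\dot\sigma$ being uniformly $L^2$ by \eqref{est1-sN2} and its factor $\nabla\sigma_D$ being reabsorbed into $E_\psi$ and the dissipation. This is precisely the step where the $C^2$-regularity and the positivity of the second fundamental form of $\partial K$ are used (through Corollary~\ref{cor:ineq}(b) and \eqref{eq:formD2gamma2}) and where the exponents in the definition of $\gamma_{\alpha,\lambda}$ are tuned so that the remainder matches the bounds of Theorem~\ref{thmNH-dyn} already at hand; it is the main obstacle of the proof.

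It remains to close the Gronwall loop. One has $\int_\Om\psi\,\nabla f\cdot\nabla v\,dx\le\tfrac12\|\nabla f\|^2_{L^2(\omega')}+CE_\psi(t)$, with $\int_0^T\|\nabla f(s)\|^2_{L^2(\omega')}\,ds\le C$ by the hypothesis $f\in L^2(0,T;H^1_{loc}(\Om))$. The commutators $R_{\mathrm{cut}}$ are controlled by Young's inequality combined with a Caccioppoli-type iteration over a finite chain of intermediate open sets between $\omega$ and $\Om$ with adapted cut-offs, the contributions supported on $\operatorname{supp}\nabla\psi$ being reabsorbed at the previous step by using the uniform lower-order bounds \eqref{est1-sN2}, \eqref{est1-vN2}, \eqref{eq:Ev-bound} on $\sigma$, $v$ and $Ev$; this is routine but a little delicate, and produces a constant depending only on $\omega$ and the data. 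Discarding the (nonnegative) leftover dissipation, one is left with $\frac{d}{dt}E_\psi(t)\le C\big(1+E_\psi(t)\big)$ with $C$ independent of $(\alpha,\lambda)$; since $E_\psi(0)\le C$ because $\sigma_0\in H^1_{loc}(\Om)$ and $v_0\in H^1(\Om)$ are fixed data independent of $(\alpha,\lambda)$, Gronwall's lemma yields $\sup_{t\in[0,T]}E_\psi(t)\le C_\omega$, which is \eqref{prop-eq2}.
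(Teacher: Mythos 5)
Your overall strategy is the paper's: differentiate \eqref{NH-pb2} in space, test with a cut-off, use the weighted coercivity \eqref{eq:formD2gamma2} coming from the positive second fundamental form, and absorb the critical term $\int|Ev|\,|\nabla\sigma_D|$ by writing $Ev=\A\dot\sigma+D\gamma(\sigma)$ and playing $|D\gamma(\sigma)|$ against the degenerate weight via \eqref{est2-sN2} with $\ell=1,2$. That absorption step, which is the geometric heart of the theorem, is correct as you present it (your weighted Cauchy--Schwarz with $|D\gamma(\sigma)|^2/w$ is just a repackaging of the paper's split into $\{d(\sigma)\le 1\}$ and $\{d(\sigma)>1\}$ followed by H\"older and Cauchy). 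The genuine gap is upstream: you never derive the ``master inequality'', and the only dangerous term produced by the cut-off, namely the commutator $\sum_k\int_\Om \partial_k\sigma\cdot(\partial_k v\odot\nabla\psi)\,dx$, is left inside $R_{\mathrm{cut}}$ and dispatched by a ``Caccioppoli-type iteration over a finite chain of intermediate open sets''. That mechanism does not work here: this term is quadratic in the gradients, hence of the same order as $E_\psi$ itself, only supported on a larger annulus; the uniform bounds you invoke to ``reabsorb it at the previous step'' (\eqref{est1-sN2}, \eqref{est1-vN2}, \eqref{eq:Ev-bound}) control $\sigma,v,\dot\sigma,\dot v$ in $L^2$ and $Ev$ in $L^1$ but no spatial gradients, so each step of the chain needs a uniform gradient bound on a strictly larger set, and after finitely many steps one reaches a neighbourhood of $\partial\Om$ where the only available $H^1$ information is the non-uniform regularity \eqref{eq:sigmaal}--\eqref{eq:val} of the Norton--Hoff solution (this is exactly the ``bad bound'' of the paper's Step 1, with constant $C_\alpha$). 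The iterated Gronwall constants blow up with the number of cut-offs, so no uniform-in-$(\alpha,\lambda)$ bound comes out.

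What is actually needed, and what the paper spends most of its Step 2 on, is an exact reduction of this commutator before any absolute values are taken: write $\partial_k v_i=2(Ev)_{ik}-\partial_i v_k$, integrate the skew part by parts onto the cut-off, use the equation of motion $\dot v=\Div\sigma+f$, and decompose $\partial_k\sigma_{ij}=\Sigma_{ijk}+(\dot v_k-f_k)\delta_{ij}$ with $\Sigma_{ijk}:=\partial_k(\sigma_D)_{ij}-(\Div\sigma_D)_k\delta_{ij}$, so that the only surviving quadratic-gradient contribution is $\Sigma_{ijk}(Ev)_{ik}\partial_j\varphi^4$, i.e.\ $Ev$ paired with $\nabla\sigma_D$ \emph{only} -- this is essential, since \eqref{eq:formD2gamma2} gives no control of $\nabla({\rm tr}\,\sigma)$ -- and all remaining commutators contain at most one gradient factor multiplied by quantities bounded uniformly in $L^\infty(0,T;L^2)$, with derivatives of the single cut-off (chosen as a power $\varphi^4$ so that its higher derivatives retain factors of $\varphi$). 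After this reduction your own absorption argument applies verbatim and a single cut-off plus Gronwall closes the estimate; no chain of nested sets is needed, and none would help. As written, the proposal asserts the conclusion of this reduction but replaces its proof by an argument that fails, so the proof is incomplete at its second most delicate point.
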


\begin{proof}
Throughout the proof $\omega$ denotes an open set compactly supported in $\Omega$. Let $\omega'$ be a further open set such that
$ \omega \subset\subset \omega' \subset\subset \Omega$, and let $\varphi\in C^\infty_c(\omega';[0,1])$ be a cut-off function satisfying $\varphi=1$ in $\omega$. For every $1 \leq k \leq n$, $x \in \omega'$, and $h<{\rm dist}(\omega',\partial \Omega)$, we will use the notation
$$\partial_k^h \zeta(x)=\frac{\zeta(x+he_k)-\zeta(x)}{h}$$
for the difference quotient of a general function $\zeta$ defined on $\Omega$ with values in a vector space.

\par

As in the proof of Theorem~\ref{thmNH-dyn}, we omit the super/subscripts $(\alpha,\lambda)$ and simply write $\sigma\equiv \sigma^{\alpha,\lambda}$, $v\equiv v^{\alpha,\lambda}$, $\gamma\equiv \gamma_{\alpha,\lambda}$. We also denote by $Q_t:=\Om \times (0,t)$ the space-time cylinder.

\par

We divide the proof into two steps.

\vskip10pt\noindent{\bf Step 1: Bad non-uniform bounds.} 
We first show that
\begin{equation}\label{eq:1800}
\sigma \in L^\infty(0,T;H^1_{\rm loc}(\Omega;\Mn)).
\end{equation}
We have
$$
\A \partial_k^h \dot \sigma+ \partial_k^h D\gamma(\sigma)=\partial_k^h Ev
\quad \text{ a.e.\ in }\omega' \times (0,T).
$$
Multiplying the previous equation by $\varphi^4\partial_k^h \sigma$ and integrating over $\Omega$ yields for a.e.\ $t \in (0,T)$,
$$\int_\Om\varphi^4 \A \partial_k^h \dot \sigma(t)\cdot \partial_k^h \sigma(t) \, dx + \int_\Om \varphi^4 \partial_k^h D\gamma(\sigma(t))\cdot \partial_k^h \sigma(t) \, dx= \int_\Om \varphi^4 \partial_k^h Ev(t)\cdot\partial_k^h \sigma(t) \, dx.$$
We rewrite the right-hand side using the second equation of \eqref{NH-pb2} tested with $\varphi^4\partial_k^h v(t) \in H^1_0(\Om;\R^n)$ (note that it vanishes on the whole $\partial\Om$, so there are no boundary terms when integrating by parts): since
$$
\int_\Om \partial_k^h \dot v(t)\cdot \varphi^4\partial_k^h v(t)\, dx+\int_\Om \partial_k^h \sigma(t)\cdot E(\varphi^4\partial_k^h v(t))\, dx=\int_\Om \partial_k^h f(t)\cdot \varphi^4\partial_k^h v(t)\, dx
$$
we obtain
\begin{multline*}
\int_\Om \varphi^4 \A \partial_k^h \dot \sigma(t)\cdot \partial_k^h \sigma(t) \, dx+ \int_\Om \varphi^4 \partial_k^h D\gamma(\sigma(t))\cdot \partial_k^h \sigma(t)\, dx\\
 =-\int_\Om \partial_k^h \dot v(t)\cdot \varphi^4 \partial_k^h v(t)\, dx- \int_\Om \partial_k^h \sigma(t)\cdot \left(\partial_k^h v(t) \otimes \nabla \varphi^4\right) dx
 + \int_\Om \partial_k^h f(t)\cdot \varphi^4\partial_k^h v(t)\, dx.
 \end{multline*}
Integrating with respect to time yields for all $t \in (0,T)$,
\begin{multline}\label{eq:diff-quot}
\frac12\int_\Om \varphi^4 \A \partial_k^h \sigma(t)\cdot \partial_k^h \sigma(t) \, dx+\frac12\int_\Omega \varphi^4 \partial_k^h v(t)\cdot \partial_k^h v(t)\, dx\\
+\iint_{Q_t} \varphi^4 \partial_k^h D\gamma(\sigma)\cdot \partial_k^h \sigma \, dx\, ds
=  
\frac12\int_\Om \varphi^4 \A \partial_k^h \sigma_0\cdot \partial_k^h \sigma_0 \, dx+\frac12\int_\Omega \varphi^4 \partial_k^h v_0\cdot \partial_k^h v_0 \, dx \\
- \iint_{Q_t} \partial_k^h \sigma\cdot \left(\partial_k^h v\otimes \nabla \varphi^4 \right) dx\, ds
+\iint_{Q_t}\partial_k^h f\cdot \varphi^4\partial_k^h v\, dx\,ds.
\end{multline}
Owing to the convexity of $\gamma$, the third term in the left-hand side of the previous equation is nonnegative. Using that $(\sigma_0,v_0) \in H^1_{loc}(\Omega;\Mn) \times H^1(\Omega;\R^n)$, $v\in L^\infty(0,T;H^1(\Om;\R^n))$, and the H\"older inequality, we get that
\begin{multline*}
\int_\Om \varphi^4 \A \partial_k^h \sigma(t)\cdot \partial_k^h \sigma(t) \, dx+\int_\Omega \varphi^4 \partial_k^h v(t)\cdot \partial_k^h v(t)\, dx \\
 \le C\Bigg[1+\|\nabla \varphi\|_{L^\infty(\Omega)} \|\nabla v\|_{L^{\infty}(0,T;L^2(\Omega))} \left(\sup_{s \in [0,T]}\int_\Om \varphi^4 \A \partial_k^h \sigma(s)\cdot \partial_k^h \sigma(s) \, dx\right)^{1/2}\\
  + \|f\|^2_{L^2(0,T;H^1(\omega'))}+\|\nabla v\|^2_{L^{\infty}(0,T;L^2(\Omega))}  \Bigg],
\end{multline*}
where the constant $C>0$ is independent of $h$. Invoking the Cauchy inequality yields 
\begin{equation}
\label{eq:unif-alpha}
\sup_{t \in [0,T]} \int_\Om \varphi^4 \A \partial_k^h \sigma(t)\cdot \partial_k^h \sigma(t) \, dx \leq C_{\alpha},
\end{equation}
where $C_{\alpha}>0$ depends on $\alpha$ (through the $L^2$ norm of $\nabla v(t)$), but is independent of $h$. This proves \eqref{eq:1800}.

Since $D\gamma:\Mn \to \R$ is Lipschitz continuous, piecewise $C^1$, and $D\gamma(0)=0$, it follows that $D\gamma(\sigma)\in L^\infty(0,T;H^1_{loc}(\Om;\MD))$. As a consequence,
for a.e.\ $t\in [0,T]$
$$
\begin{cases}
\partial_k^h v \to \partial_k v & \text{ strongly in } L^2_{\rm loc}(\Omega;\R^n),\\
\partial_k^h \sigma \to \partial_k \sigma& \text{ strongly in } L^2_{\rm loc}(\Omega;\Mn),\\
\partial_k^hD\gamma(\sigma) \to \partial_k D\gamma(\sigma) & \text{ strongly in } L^2_{\rm loc}(\Omega;\MD),
\end{cases}$$
as $h \to 0$, and passing to the limit in \eqref{eq:diff-quot}, thanks to the uniform bound \eqref{eq:unif-alpha}, we infer
\begin{multline}\label{eq:estim1}
\frac12\int_\Om \varphi^4 \A \partial_k \sigma(t)\cdot \partial_k \sigma(t) \, dx+\frac12\int_\Omega \varphi^4 |\nabla v(t)|^2\, dx\\
+\iint_{Q_t} \varphi^4 \partial_k D\gamma(\sigma)\cdot \partial_k \sigma \, dx\, ds=
\frac12\int_\Om \varphi^4 \A \partial_k \sigma_0\cdot \partial_k \sigma_0 \, dx+\frac12\int_\Omega \varphi^4 |\nabla v_0|^2\, dx\\
 -\iint_{Q_t}  \partial_k \sigma\cdot \left(\partial_k v \otimes \nabla \varphi^4 \right) dx\, ds+ \iint_{Q_t} \partial_k f\cdot \varphi^4\partial_k v\, dx\,ds.
\end{multline}

\vskip10pt\noindent{\bf Step 2: Uniform bounds.} From now on, $C>0$ will always stand for a general constant independent of the parameters $(\alpha,\lambda)$. 
We start with an estimate for the term 
$$
\iint_{Q_t}  \partial_k \sigma\cdot \left(\partial_k v \otimes \nabla \varphi^4 \right) dx\, ds
$$
in the right-hand side of \eqref{eq:estim1}. By the second equation in \eqref{NH-pb2} we have that for all $1 \leq k \leq n$
$$
\dot v_k=(\Div \sigma)_k+ f_k=(\Div \sigma_D)_k + \frac1n \partial_k ({\rm tr}\sigma)+ f_k,
$$ 
hence
$$\partial_k \sigma_{ij} = \partial_k (\sigma_D)_{ij} + \frac1n \partial_k({\rm tr}\sigma) \delta_{ij}\\
= \Sigma_{ijk} + (\dot v_k- f_k ) \delta_{ij},$$
where
$$\Sigma_{ijk}:=\partial_k (\sigma_D)_{ij} - (\Div \sigma_D)_{k}\delta_{ij}.$$
Note for future use that $\Sigma_{ijk}$ is a linear function of the first partial derivative of $\sigma_D$ only. Using the summation convention over repeated indexes, we have
\begin{eqnarray*}
\lefteqn{\iint_{Q_t}  \partial_k \sigma\cdot \left(\partial_k v \otimes \nabla \varphi^4 \right) dx\, ds=  \iint_{Q_t} \partial_k \sigma_{ij}\partial_k v_i \partial_j \varphi^4\, dx \, ds}\\
&=&2\iint_{Q_t} \partial_k \sigma_{ij}(E v)_{ik} \partial_j \varphi^4\, dx \, ds - \iint_{Q_t} \partial_k \sigma_{ij}\partial_i v_k \partial_j \varphi^4\, dx \, ds\\
&=&2 \iint_{Q_t}\Sigma_{ijk}(E v)_{ik} \partial_j \varphi^4\, dx \, ds +2\iint_{Q_t} (E v)_{ik} (\dot v_k- f_k) \partial_i \varphi^4\, dx \, ds\\
&&- \iint_{Q_t} \partial_k \sigma_{ij}\partial_i v_k \partial_j \varphi^4\, dx \, ds.
\end{eqnarray*}
Integrating the last term by parts in space we get
\begin{eqnarray*}
\int_\Om\partial_k \sigma_{ij}\partial_i v_k \partial_j \varphi^4\, dx & =& -\langle \partial_{ki} \sigma_{ij},  v_k \partial_j \varphi^4\rangle_{H^{-1}(\omega')\times H^1_0(\omega')}
-\int_\Om v_k \partial_k\sigma_{ij}\partial_{ij}\varphi^4\,dx\\
& =& \int_\Om \partial_k v_k \partial_{i} \sigma_{ij}\partial_j\varphi^4\,dx +
\int_\Om v_k \partial_{i} \sigma_{ij}\partial_{jk}\varphi^4 dx
-\int_\Om v_k\partial_k\sigma_{ij}\partial_{ij}\varphi^4\,dx\\
& =& \int_\Om (Ev)_{kk} (\dot v_j- f_j)\partial_j\varphi^4\,dx
+\int_\Om v_k \partial_{i} \sigma_{ij}\partial_{jk}\varphi^4\,dx\\
&&-\int_\Om v_k\partial_k \sigma_{ij}\partial_{ij}\varphi^4\,dx,
\end{eqnarray*}
which yields
\begin{eqnarray}
\lefteqn{\iint_{Q_t}  \partial_k \sigma\cdot \left(\partial_k  v \otimes \nabla \varphi^4\right) dx\, ds} \nonumber
\\
& = & 2 \iint_{Q_t}\Sigma_{ijk}(E v)_{ik} \partial_j \varphi^4\, dx \, ds+2\iint_{Q_t} (E v)_{ik}(\dot v_k- f_k) \partial_i \varphi^4\, dx \, ds \nonumber \\
&& -\iint_{Q_t}  (Ev)_{kk} (\dot v_j- f_j)\partial_j\varphi^4\,dx\, ds
-\iint_{Q_t}  v_k \partial_{i} \sigma_{ij}\partial_{jk}\varphi^4\,dx\, ds \nonumber \\
&& +\iint_{Q_t}  v_k\partial_k \sigma_{ij}\partial_{ij}\varphi^4\,dx\, ds.
\label{hr-mg0}
\end{eqnarray}
The second term at the right-hand side can be bounded in the following way:
\begin{multline*}
\left|\iint_{Q_t}(E v)_{ik} (\dot v_k-f_k) \partial_i \varphi^4\,dx\, ds\right|\\
\le C\left[\iint_{Q_t} \varphi^4|\nabla v|^2\,dx\,ds+\|\dot v\|_{L^\infty(0,T; L^2(\Om))}^2+\|\varphi f\|_ {L^2(0,T;L^2(\Om))}^2\right].
\end{multline*}
The same estimate holds for the third term at the right-hand side of \eqref{hr-mg0}.
Since $\partial_{jk}\varphi^4=\varphi^2(12\partial_j\varphi \partial_k\varphi+4\varphi \partial_{jk}\varphi)$, we have
$$
\left|\iint_{Q_t}v_k \partial_{i} \sigma_{ij}\partial_{jk}\varphi^4\,dx\, ds\right|\le C\left[ \|v\|^2_{L^\infty(0,T;L^2(\Om))}+\iint_{Q_t} \varphi^4\A\partial_k \sigma\cdot \partial_k \sigma\, dx\,ds\right]
$$
and the same bound holds for the last term in \eqref{hr-mg0}. Applying the above estimates in \eqref{hr-mg0} and using that  $\A\dot\sigma+D\gamma(\sigma)=Ev$, we conclude that
\begin{eqnarray*}
\lefteqn{-\iint_{Q_t}  \partial_k \sigma\cdot \left(\partial_k v\otimes \nabla \varphi^4\right) dx\, ds}
\\
& \le & -2 \iint_{Q_t}\Sigma_{ijk} [\A \dot \sigma+ D\gamma(\sigma)]_{ik} \partial_j \varphi^4\, dx \, ds
+C\bigg(
 \|v\|^2_{W^{1,\infty}(0,T;L^2(\Omega))} + \| \varphi  f\|^2_{L^2(0,T;L^2(\Om))}
\\
&& {}+\iint_{Q_t}\varphi^4|\nabla v|^2\, dx \, ds  +\iint_{Q_t}\varphi^4\A\partial_k \sigma\cdot \partial_k \sigma\, dx\,ds
\bigg),
\end{eqnarray*}
so that from \eqref{eq:estim1} we obtain
\begin{multline}
\label{eq:estim2}
\frac12\int_\Om \varphi^4 \A \partial_k \sigma(t)\cdot \partial_k \sigma(t) \, dx+\frac12\int_\Omega \varphi^4 |\nabla v(t)|^2\, dx+\iint_{Q_t} \varphi^4 \partial_k D\gamma(\sigma)\cdot\partial_k \sigma \, dx\, ds\\
\leq C\bigg(\|\varphi \sigma_0 \|^2_{H^1(\Omega)}+\|v_0\|^2_{H^1(\Omega)} +  \|v\|^2_{W^{1,\infty}(0,T;L^2(\Omega))}
 +\|\varphi f\|_{L^2(0,T;L^2(\Om))}^2
 \\
{} +\|\varphi \nabla f\|_{L^2(0,T;L^2(\Om))}^2+\iint_{Q_t}\varphi^4|\nabla v|^2\, dx \, ds + \iint_{Q_t}\varphi^4 \A\partial_k \sigma\cdot \partial_k \sigma\, dx\, ds \bigg)\\
-2 \iint_{Q_t}\Sigma_{ijk} [\A \dot \sigma + D\gamma(\sigma)]_{ik} \partial_j \varphi^4\, dx \, ds.
\end{multline}

We now focus on the last term in the right-hand side of \eqref{eq:estim2}. Since $\Sigma_{ijk}$ 
 is a linear function of the first order partial derivatives of $\sigma_D$, thanks to the Cauchy inequality, we have
$$
 \iint_{Q_t}\Sigma_{ijk} \A \dot \sigma_{ik} \partial_j \varphi^4\, dx \, ds \leq C\left( \iint_{Q_t} \varphi^4 \A \partial_k \sigma\cdot \partial_k \sigma\, dx\, ds +\|\dot \sigma\|^2_{L^\infty(0,T;L^2(\Omega))}\right).
$$
Let us come to the term
$$
\iint_{Q_t}\Sigma_{ijk}  [D\gamma(\sigma)]_{ik} \partial_j \varphi^4\, dx \, ds.
$$
Since by \eqref{Dgamma}
$$
|D\gamma(\sigma)|=\left( 1+d^2(\sigma)\wedge \lambda^2\right)^{\frac{1}{2\alpha}-\frac{1}{2}}d(\sigma)
$$
while $|\Sigma_{ijk}| \leq C|\nabla \sigma_D|$, we may write
\begin{eqnarray}\label{eq:ref:intro}
&&\left| \iint_{Q_t} [D\gamma(\sigma)]_{ik}\Sigma_{ijk}\partial_j\varphi^4\, dx \, ds \right|\\
&&\hspace{2cm} \leq  C \iint_{Q_t} (1+d^2(\sigma) \wedge \lambda^2)^{\frac1{2\alpha}-\frac12}d(\sigma) |\nabla \sigma_D| |\partial_j \varphi^4|\, dx \, ds\nonumber
\\
&&\hspace{2cm}\leq   C \left(\int_0^t \int_{\{d(\sigma)\leq 1\}} \varphi^4 (1+d^2(\sigma) \wedge \lambda^2)^{\frac1{2\alpha}-\frac12}d(\sigma) |\nabla \sigma_D|^2\, dx \, ds\right)^{1/2}\nonumber
\\
&&\hspace{2.5cm} 
{}+ C \left(\int_0^t \int_{\{d(\sigma)> 1\}} \varphi^4 (1+d^2(\sigma) \wedge \lambda^2)^{\frac1{2\alpha}-\frac12} |\nabla \sigma_D|^2\, dx \, ds\right)^{1/2},\nonumber
\end{eqnarray}
where in the last step we used the H\"older inequality and \eqref{est2-sN2} with $\ell=1$ and $\ell=2$. Applying the Cauchy inequality, we get
\begin{eqnarray*}
&&\left| \iint_{Q_t} [D\gamma(\sigma)]_{ik}\Sigma_{ijk}\partial_j\varphi^4\, dx \, ds \right|\\
&&\hspace{2cm}\leq   \frac{C_K}{1+C_K} \int_0^t \int_{\{d(\sigma)\leq 1\}} \varphi^4 (1+d^2(\sigma) \wedge \lambda^2)^{\frac1{2\alpha}-\frac12}d(\sigma) |\nabla \sigma_D|^2\, dx \, ds
\\
&&\hspace{2.5cm} 
+  \frac{C_K}{1+C_K}\int_0^t \int_{\{d(\sigma)> 1\}} \varphi^4 (1+d^2(\sigma) \wedge \lambda^2)^{\frac1{2\alpha}-\frac12} |\nabla \sigma_D|^2\, dx \, ds +C,
\end{eqnarray*}
where $C_K>0$ is the same constant as in \eqref{eq:formD2gamma2}. Since $\partial K$ is of class $C^2$ and its second fondamental form is positive definite at every point, by \eqref{eq:formD2gamma2}
we have
$$ \partial_k D\gamma(\sigma)\cdot \partial_k \sigma  \geq (1+d^2(\sigma) \wedge \lambda^2)^{\frac{1}{2\alpha}-\frac12}\frac{C_K d(\sigma)}{1+C_K d(\sigma)}|\partial_k\sigma_D|^2,
$$
so that we conclude
\begin{multline*}
\frac12\int_\Om \varphi^4 \A \partial_k \sigma(t)\cdot\partial_k \sigma(t) \, dx+\frac12\int_\Omega \varphi^4 |\nabla v(t)|^2\, dx\\
+\iint_{Q_t} (1+d^2(\sigma) \wedge \lambda^2)^{\frac{1}{2\alpha}-\frac12}\frac{C_Kd(\sigma)}{1+C_Kd(\sigma)}\varphi^4 |\nabla\sigma_D|^2\, dx\,ds\\
\leq 
C\bigg(1+\|\varphi\sigma_0\|^2_{H^1(\Omega)}+\|v_0\|^2_{H^1(\Omega)} +  \|v\|^2_{W^{1,\infty}(0,T;L^2(\Omega))}+ \|\sigma\|^2_{W^{1,\infty}(0,T;L^2(\Omega))}
\\
+ \|\varphi f\|^2_{L^2(0,T;L^2(\Om))}+\|\varphi \nabla f\|^2_{L^2(0,T;L^2(\Om))}+\int_0^t \left(\int_\Om \varphi^4\A\partial_k \sigma\cdot \partial_k \sigma\, dx+ \int_{\Om}\varphi^4|\nabla v|^2\, dx\right) ds\bigg)\\
+\frac{C_K}{1+C_K} \int_0^t \int_{\{d(\sigma)\leq 1\}} (1+d^2(\sigma) \wedge \lambda^2)^{\frac1{2\alpha}-\frac12}d(\sigma) \varphi^4|\nabla \sigma_D|^2 \, dx \, ds\\
+\frac{C_K}{1+C_K} \int_0^t \int_{\{d(\sigma)> 1\}} (1+d^2(\sigma) \wedge \lambda^2)^{\frac1{2\alpha}-\frac12} \varphi^4 |\nabla \sigma_D|^2\, dx \, ds,
\end{multline*}
where the constant in front of the last two integrals was chosen in such a way that both terms are compensated by the third integral on the left-hand side. By the coercivity of $\A$ and \eqref{est1-sN2} we conclude that for all $t \in 0,T)$,
$$\|\varphi^2 \nabla \sigma(t)\|^2_{L^2(\Omega)}+\|\varphi^2 \nabla v(t)\|^2_{L^2(\Omega)} 
\leq C_1 \int_0^t \left(\|\varphi^2 \nabla \sigma\|^2_{L^2(\Omega)}+\|\varphi^2 \nabla v\|^2_{L^2(\Omega)}\right) ds + C_2,$$
where $C_1$, $C_2>0$ are independent of $(\alpha,\lambda)$. By the Gronwall Lemma \eqref{prop-eq2} follows, and the proof is concluded.
\end{proof}

\subsection{Existence of a strong solution in dynamical perfect plasticity}

In this section we recover the existence of strong evolutions in dynamic perfect plasticity by considering the Norton-Hoff approximations of the previous sections, and by letting the parameters $\alpha \to 0^+$ and $\lambda\to +\infty$.

\begin{theorem}[\bf Strong evolutions in dynamic perfect plasticity]
\label{thm:hreg2}
Under the assumptions of Theorem~\ref{prop:reg-sN2}, let $u_0\in H^1(\Om;\R^n)$ be such that
$\Div u_0={\rm tr}(\mathbf A\sigma_0)$ in $\Omega$. Then there exists a unique triplet $(u,e,p)$ with  regularity
$$
\begin{cases}
u \in W^{2,\infty}(0,T;L^2(\Omega;\R^n)) \cap W^{1,\infty}(0,T;H^1_{\rm loc}(\Omega;\R^n)) \cap W^{1,\infty}(0,T; BD(\Omega)),\\
e \in W^{1,\infty}(0,T;L^2(\Omega;\Mn)) \cap L^\infty(0,T;H^1_{\rm loc}(\Omega;\Mn)),\\
p \in W^{1,\infty}(0,T;\mathcal M(\Omega\cup \Gamma_D;\MD)) \cap W^{1,\infty}(0,T;L^2_{\rm loc}(\Omega;\MD))
\end{cases}
$$
satisfying the following conditions:

\medskip

\noindent {\bf Kinematic compatibility:} for all $t \in [0,T]$,
\begin{equation}\label{eq:kc}
Eu(t)=e(t)+p(t) \text{ $\LL^n$-a.e.\ in }\Om, \quad p(t)=(w(t)-u(t))\odot \nu \HH^{n-1} \text{ on }\Gamma_D;
\end{equation}

\medskip

\noindent {\bf Stress constraint:} for all $t\in [0,T]$,
\begin{equation}
\label{eq:sigmaK}
\sigma(t):=\mathbf C e(t) \in \K \quad \text{$\LL^n$-a.e.\ in }\Om;
\end{equation}

\medskip

\noindent {\bf Equation of motion:} for a.e.\ $t\in [0,T]$
\begin{equation}
\label{eq:motion}
\ddot u-\Div \sigma=f \quad \text{$\LL^n$-a.e.\ in }\Om;
\end{equation}

\medskip

\noindent {\bf Neumann boundary condition:} for a.e.\ $t\in (0,T)$,
\begin{equation}
\label{eq:Neumann}
\sigma(t)\nu=g(t)\quad  \HH^{n-1}\text{-a.e.\ on }\Gamma_N;
\end{equation}

\medskip

\noindent {\bf Flow rule:} for a.e.\ $t\in (0,T)$,
\begin{equation}\label{eq:fr}
\begin{cases}
H(\dot p(t))=\sigma_D(t)\cdot \dot p(t) & \text{$\LL^n$-a.e.\ in $\Om$,} \\
H((\dot w(t)-\dot u(t))\odot \nu)=(\sigma(t)\nu)_\tau \cdot (\dot w(t)-\dot u(t)) &\text{$\HH^{n-1}$-a.e.\ on $\Gamma_D$;}
\end{cases}
\end{equation}
\medskip

\noindent {\bf Initial condition:} 
\begin{equation}\label{eq:ic}
(u(0),\dot u(0),e(0),p(0))=(u_0,v_0,\mathbf A\sigma_0,Eu_0-\mathbf A\sigma_0).
\end{equation}
\end{theorem}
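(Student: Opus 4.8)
The strategy is to construct the strong solution $(u,e,p)$ as the limit, as $\alpha\to0^+$ and $\lambda\to+\infty$, of the Norton--Hoff approximations of Theorem~\ref{thmNH-dyn}, relying on the uniform bounds \eqref{est1-sN2}--\eqref{est2-uni2} and, crucially, on the uniform spatial regularity estimate \eqref{prop-eq2} of Theorem~\ref{prop:reg-sN2}. Set $u^{\alpha,\lambda}(t):=u_0+\int_0^t v^{\alpha,\lambda}(s)\,ds$, $e^{\alpha,\lambda}:=\mathbf A\sigma^{\alpha,\lambda}$, and, for the plastic strain rate, $\dot p^{\alpha,\lambda}:=Ev^{\alpha,\lambda}-\mathbf A\dot\sigma^{\alpha,\lambda}=D\gamma_{\alpha,\lambda}(\sigma^{\alpha,\lambda})\in\MD$, and define $p^{\alpha,\lambda}(t):=(Eu_0-\mathbf A\sigma_0)+\int_0^t\dot p^{\alpha,\lambda}(s)\,ds$. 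By \eqref{Dg-2} and the safe-load condition \eqref{eq:rhoK} (as in the proof of Theorem~\ref{thmNH-dyn}) the family $\dot p^{\alpha,\lambda}$ is bounded in $L^\infty(0,T;\mathcal M(\Om\cup\Gamma_D;\MD))$, and by \eqref{prop-eq2} together with \eqref{est1-sN2} it is also bounded in $L^\infty(0,T;L^2_{\rm loc}(\Om;\MD))$; moreover $\sigma^{\alpha,\lambda}$ is bounded in $W^{1,\infty}(0,T;L^2)\cap L^\infty(0,T;H^1_{\rm loc})\cap L^\infty(0,T;H_{\Div})$ and $v^{\alpha,\lambda}$ in $W^{1,\infty}(0,T;L^2)\cap L^\infty(0,T;H^1_{\rm loc})\cap L^\infty(0,T;BD)$. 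Along a subsequence I would pass to the weak$^*$ limits $\sigma^{\alpha,\lambda}\wto\sigma$, $v^{\alpha,\lambda}\wto v$, $u^{\alpha,\lambda}\wto u$, $\dot p^{\alpha,\lambda}\wto\dot p$, $p^{\alpha,\lambda}\wto p$ in the corresponding spaces, set $e:=\mathbf A\sigma$, and check that $u(t)=u_0+\int_0^t v$, that $p(t)=Eu(t)-e(t)$ in $\Om$, and that $(u(t),e(t),p(t))\in\mathcal A^{\rm dyn}_{w(t)}$, the boundary relation $p(t)=(w(t)-u(t))\odot\nu\,\HH^{n-1}$ on $\Gamma_D$ following from the $BD$-trace of the sequence $u^{\alpha,\lambda}(t)$. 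The announced regularity of $(u,e,p)$ is then read off from the estimates: the $W^{1,\infty}$-in-time bounds give the time regularity (in particular $u\in W^{2,\infty}(0,T;L^2)$ since $\dot u=v$), \eqref{prop-eq2} gives $\sigma,v\in L^\infty(0,T;H^1_{\rm loc})$, and $\dot p=Ev-\mathbf A\dot\sigma\in L^\infty(0,T;L^2_{\rm loc})$.

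The kinematic compatibility \eqref{eq:kc} holds by construction. The equation of motion \eqref{eq:motion} and the Neumann condition \eqref{eq:Neumann} pass to the limit directly from \eqref{NH-pb2}, since $\dot v^{\alpha,\lambda}\wto\dot v=\ddot u$ and $\Div\sigma^{\alpha,\lambda}\wto\Div\sigma$ in $L^\infty(0,T;L^2)$ and the normal-trace operator is continuous on $H_{\Div}$. The stress constraint \eqref{eq:sigmaK} follows from \eqref{est2-uni2}: by Chebyshev's inequality, for fixed $\delta>0$ and $\lambda>\delta$, $\mathcal L^n(\{d(\sigma^{\alpha,\lambda}(t))>\delta\})\le C\bigl(1+\tfrac1\alpha\bigr)(1+\delta^2)^{-\frac1{2\alpha}-\frac12}\to0$ as $\alpha\to0$, which together with the a.e.\ convergence of a subsequence of $\sigma^{\alpha,\lambda}$ forces $d(\sigma(t))=0$, i.e.\ $\sigma(t)\in\K$, $\mathcal L^n$-a.e.\ in $\Om$. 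Finally, the initial conditions \eqref{eq:ic} are recovered from the weak time-continuity of the limits.

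The core of the proof is the flow rule \eqref{eq:fr}. For the interior relation I would use the Fenchel identity $\sigma^{\alpha,\lambda}_D\cdot\dot p^{\alpha,\lambda}=\gamma_{\alpha,\lambda}(\sigma^{\alpha,\lambda})+\gamma^*_{\alpha,\lambda}(\dot p^{\alpha,\lambda})$, valid a.e., together with $\gamma_{\alpha,\lambda}\ge0$ and the lower bound $\gamma^*_{\alpha,\lambda}(q)\ge H(q)-\tfrac{\alpha}{\alpha+1}$ for every $q\in\MD$ (a consequence of the representation $\gamma_{\alpha,\lambda}(\xi)=\Theta_{\alpha,\lambda}(d(\xi))$ with $\Theta_{\alpha,\lambda}$ increasing and convex, and of the cylindrical geometry of $\K$). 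Since, by \eqref{prop-eq2} and the bound on $\dot\sigma^{\alpha,\lambda}$ in $L^2(0,T;L^2)$, an Aubin--Lions argument yields $\sigma^{\alpha,\lambda}\to\sigma$ strongly in $L^2_{\rm loc}(\Om\times(0,T);\Mn)$, while $\dot p^{\alpha,\lambda}\wto\dot p$ weakly in $L^2_{\rm loc}$, one obtains for every nonnegative $\varphi\in C^\infty_c(\Om\times(0,T))$ the chain $\int_0^T\!\!\int_\Om\varphi\,\sigma_D\cdot\dot p=\lim\int_0^T\!\!\int_\Om\varphi\,\sigma^{\alpha,\lambda}_D\cdot\dot p^{\alpha,\lambda}\ge\liminf\int_0^T\!\!\int_\Om\varphi\,\gamma^*_{\alpha,\lambda}(\dot p^{\alpha,\lambda})\ge\liminf\int_0^T\!\!\int_\Om\varphi\,H(\dot p^{\alpha,\lambda})\ge\int_0^T\!\!\int_\Om\varphi\,H(\dot p)$, the last step by weak lower semicontinuity of $q\mapsto\int\varphi\,H(q)$; since $\sigma_D(t)\in K$ a.e.\ gives the opposite pointwise inequality $\sigma_D\cdot\dot p\le H(\dot p)$, the first relation in \eqref{eq:fr} follows. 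The boundary relation is the delicate point, because the Norton--Hoff plastic strain rates carry no mass on $\Gamma_D$: here I would pass to the limit in the energy balance of Remark~\ref{rem:energy-balance} (using the strong $L^2$ convergence of $\sigma^{\alpha,\lambda}(t)$ and $v^{\alpha,\lambda}(t)$, the inequalities $\gamma_{\alpha,\lambda}\ge0$ and $\gamma^*_{\alpha,\lambda}(q)\ge H(q)-\tfrac{\alpha}{\alpha+1}$, and lower semicontinuity of the dissipation under weak$^*$ convergence of measures) and compare the resulting limiting inequality with the energy inequality satisfied a priori by $(u,e,p)$ — obtained by testing \eqref{eq:motion} with $\dot u-\dot w$, using \eqref{eq:kc}, the duality formula \eqref{intbyp} and the measure inequality $[\sigma_D\cdot\dot p]\le H(\dot p)$ — to conclude that the global duality identity $\langle\sigma_D(t),\dot p(t)\rangle=\int_{\Om\cup\Gamma_D}H(\dot p(t))$ holds for a.e.\ $t$. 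The interior flow rule already equates the restrictions of this identity to $\Om$, and on $\Gamma_D$ one has $(\sigma(t)\nu)_\tau\cdot(\dot w(t)-\dot u(t))=\sigma_D(t)\cdot\bigl((\dot w(t)-\dot u(t))\odot\nu\bigr)\le H\bigl((\dot w(t)-\dot u(t))\odot\nu\bigr)$ $\HH^{n-1}$-a.e.\ (because $(\dot w-\dot u)\odot\nu$ is deviatoric, hence $\dot w-\dot u$ is tangential, and $\sigma_D\in K$); therefore equality must hold on $\Gamma_D$ as well, which is the second relation in \eqref{eq:fr}. I expect this energy-comparison step, together with the identification of the boundary part of $p$, to be the main obstacle.

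Uniqueness follows from the classical convexity argument: for two solutions, testing the difference of the equations of motion with the difference of the velocities, integrating by parts through the duality pairings of Definition~\ref{def:duality}, and using the monotonicity inequality $(\sigma_{1,D}-\sigma_{2,D})\cdot(\dot p_1-\dot p_2)\ge0$ (immediate from the flow rule and $\sigma_{i,D}\in K$, read in the sense of the pairing $[\,\cdot\,]$ up to $\Gamma_D$) together with Gr\"onwall's lemma, one gets that the kinetic and elastic energies of the difference vanish, whence $u_1=u_2$, $\sigma_1=\sigma_2$, $e_1=e_2$ and $p_1=p_2$.
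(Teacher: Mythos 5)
Your proposal follows essentially the same route as the paper: pass to the limit in the Norton--Hoff approximations using the uniform bounds \eqref{est1-sN2}--\eqref{est2-uni2} and the uniform spatial estimate \eqref{prop-eq2}, read off the stress constraint from \eqref{est2-uni2}, and obtain the flow rule (in particular its boundary part) by passing to the limit in the energy balance of Remark~\ref{rem:energy-balance} and comparing with the duality formula \eqref{intbyp} and the measure inequality $[\sigma_D\cdot \dot p]\le H(\dot p)$, which is exactly the paper's Step~5. Your minor variants (Chebyshev plus a.e.\ convergence for the stress constraint, a Fenchel-inequality derivation of the interior flow rule, a sketched Gronwall uniqueness instead of citing the literature) are sound, with the caveat that the lower semicontinuity of the time-integrated dissipation under the merely $L^\infty(0,T;\mathcal M(\Om\cup\Gamma_D;\MD))$-weak* convergence \eqref{eq:conv-pdot} is not a one-line Reshetnyak appeal but requires the localization and supremum argument the paper carries out via Lemma~\ref{lemma:AFP}.
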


\begin{proof}
Let $\alpha_j\to 0^+$ and $\lambda_j\to +\infty$
and let us denote with $(\sigma_j,v_j)$ the evolution given by Theorem~\ref{thmNH-dyn} with the choice $\alpha:=\alpha_j$ and $\lambda=\lambda_j$, i.e., such that
\begin{equation}\label{NH-pb3}
\begin{cases}
\A\dot\sigma_j+D\gamma_{\alpha_j,\lambda_j}(\sigma_j)=Ev_j & \text{ in }\Omega \times (0,T),
\\
\dot v_j -\Div\sigma_j=f  & \text{ in }\Omega \times (0,T),
\\
v_j=\dot w & \text{ on }\Gamma_D \times (0,T),
\\
\sigma_j\nu=g & \text{ on }\Gamma_N \times (0,T),
\\
(\sigma_j(0),v_j(0))=(\sigma_0,v_0) & \text{ in }\Omega
\end{cases}
\end{equation}
We divide the proof into six steps.

\vskip10pt\noindent{\bf Step 1: Compactness.}
By applying the Ascoli-Arzel\`a Theorem we deduce from \eqref{est1-sN2} that 
there exists 
$$
(\sigma,v) \in W^{1,\infty}(0,T;L^2(\Omega;\Mn))\times W^{1,\infty}(0,T;L^2(\Omega;\R^n))
$$ 
such that, up to subsequences,
\begin{equation}\label{conv-sN2}
\begin{cases}
\sigma_j(t)\wto\sigma(t) & \text{weakly in } L^2(\Omega;\Mn),\\
v_j(t)\wto v(t) & \text{weakly in } L^2(\Omega;\R^n)
\end{cases}
\end{equation}
for every $t\in[0,T]$ and
\begin{equation}\label{wconv-sN2}
\begin{cases}
\sigma_j\wto\sigma & \text{weakly* in } W^{1,\infty}(0,T;L^2(\Omega;\Mn)),\\
v_j\wto v & \text{weakly* in } W^{1,\infty}(0,T;L^2(\Omega;\R^n)).
\end{cases}
\end{equation}
In addition, by \eqref{est1-vN2} we have that
\begin{equation}
\label{eq:bdv}
v_j\wto v \quad \text{weakly* in } L^\infty(0,T;BD(\Omega)).
\end{equation}
From Theorem~\ref{prop:reg-sN2} and the Urysohn property we also get that for all open sets $\omega\subset\subset\Omega$ and every $t\in[0,T]$
$$\begin{cases}
\sigma_j(t)\wto\sigma(t) \quad \text{weakly in }H^1(\omega;\Mn),\\
v_j(t)\wto v(t) \quad \text{weakly in } H^1(\omega;\R^n),
\end{cases}$$
so that $\sigma \in L^\infty(0,T;H^1_{\rm loc}(\Omega;\Mn))$ and $v \in L^\infty(0,T;H^1_{\rm loc}(\Omega;\R^n)) \cap L^\infty(0,T;BD(\Omega))$.

\vskip10pt\noindent{\bf Step 2: The limit evolution.}
We now set for every $t\in[0,T]$
$$
u(t):=u_0+\int_0^t v(s)\, ds
$$
as a	Bochner integral in $L^2(\Omega;\R^n)$. It follows that $u(0)=u_0$, $\dot u=v$ and
$$u \in W^{2,\infty}(0,T;L^2(\Omega;\R^n)) \cap W^{1,\infty}(0,T;H^1_{\rm loc}(\Omega;\R^n)) \cap W^{1,\infty}(0,T; BD(\Omega)).$$
We define 
$$e:=\A\sigma \in W^{1,\infty}(0,T;L^2(\Omega;\Mn)) \cap L^\infty(0,T;H^1_{\rm loc}(\Omega; \Mn)),$$
and, for all $t \in [0,T]$,
$$p(t):=
\begin{cases}
Eu(t)-e(t) & \text{ in }\Om,\\
(w(t)-u(t))\odot \nu \HH^{n-1} & \text{ on }\Gamma_D,
\end{cases}
$$
so that
$$p \in W^{1,\infty}(0,T;\mathcal M(\Omega\cup \Gamma_D;\Mn)) \cap W^{1,\infty}(0,T;L^2_{\rm loc}(\Omega;\Mn))$$
and the kinematic compatibility conditions \eqref{eq:kc} are satisfied. Moreover from the first equation in \eqref{NH-pb3} and taking into account \eqref{wconv-sN2} and \eqref{eq:bdv} we infer
\begin{equation}
\label{eq:conv-pdot}
D\gamma_{\alpha_j,\lambda_j}(\sigma_j)\wto \dot p\qquad\text{weakly* in }L^\infty(0,T;{\mathcal M}(\Om \cup \Gamma_D;\MD)).
\end{equation}
Together with the assumption $\Div u_0={\rm tr}(\mathbf A\sigma_0)$ in $\Omega$, this implies in particular that
$p(t)\in \MD$ for every $t\in[0,T]$.

\vskip10pt\noindent{\bf Step 3: The linear limit equations.}
By \eqref{conv-sN2} we have that $(\sigma(0),v(0))=(\sigma_0,v_0)$ and by the definition of $u$, $e$ and $p$, we also get that $(u(0),e(0),p(0))=(u_0,\mathbf A \sigma_0,Eu_0- \mathbf A \sigma_0)$, so that the initial condition \eqref{eq:ic} is satisfied. 

According to the second equation in \eqref{NH-pb3}, together with \eqref{wconv-sN2}, we have that 
 $\Div \sigma_j \wto \Div \sigma$ weakly* in $L^\infty(0,T;L^2(\Om;\R^n))$, so that $\sigma \in L^\infty(0,T;H_{\Div}(\Om;\Mn))$ and the equation of motion \eqref{eq:motion} is satisfied. Moreover, as $\sigma_j\nu \wto \sigma\nu$ weakly* in $L^\infty(0,T;H^{-\frac12}(\partial\Om;\R^n))$, we also get the validity of the Neumann boundary condition \eqref{eq:Neumann}.

\vskip10pt\noindent{\bf Step 4: The stress constraint.} 
Let  us fix $q>1$ and $\lambda>0$. By the H\"older inequality we have that for $j$ large and all $t \in [0,T]$,
\begin{eqnarray*}
\lefteqn{\int_\Omega \big[(1+d^2(\sigma_j(t))\wedge \lambda^2)^{q}+q(1+\lambda^2)^{q-1}(d^2(\sigma_j(t))-\lambda^2)_+\big] \, dx}\\
& \leq & \int_\Omega \big[(1+d^2(\sigma_j(t))\wedge \lambda_j^2)^{q}+q(1+\lambda_j^2)^{q-1}(d^2(\sigma_j(t))-\lambda_j^2)^+\big] \, dx\\
& \leq & \left(\int_\Omega (1+d^2(\sigma_j(t))\wedge \lambda_j^2)^{\frac{1}{2\alpha_j} + \frac12}\, dx\right)^{\frac{2q\alpha_j}{\alpha_j+1}}\LL^n(\Omega)^{1-\frac{2q\alpha_j}{\alpha_j+1}}\\
&& +\frac{q}{(1+\lambda_j^2)^{\frac{1}{2\alpha_j}+\frac12 -q}}\int_\Omega (1+\lambda_j^2)^{\frac{1}{2\alpha_j}-\frac12}(d^2(\sigma_j(t))-\lambda_j^2)^+\,dx.
\end{eqnarray*}
Using next the uniform estimates \eqref{eq:energy-bound} and \eqref{est2-uni2}, we infer that
\begin{multline*}
\int_\Omega \big[(1+d^2(\sigma_j(t))\wedge \lambda^2)^{q}+q(1+\lambda^2)^{q-1}(d^2(\sigma_j(t))-\lambda^2)_+\big] \, dx\\
\leq \left( C\frac{\alpha_j+1}{\alpha_j}\right)^{\frac{2q\alpha_j}{\alpha_j+1}} \LL^n(\Omega)^{1-\frac{2q\alpha_j}{\alpha_j+1}}+\frac{2qC}{(1+\lambda_j^2)^{\frac{1}{2\alpha_j}+\frac12 -q}}.
\end{multline*}
By convexity of the function $\xi \mapsto (1+d^2(\xi)\wedge \lambda^2)^{q}+q(1+\lambda^2)^{q-1}(d^2(\xi)-\lambda^2)_+$ 
 and the weak convergence  \eqref{conv-sN2}, we have for every $t\in [0,T]$, $q>1$ and $\lambda>0$
$$\int_\Omega \big[(1+d^2(\sigma(t))\wedge \lambda^2)^{q}+q(1+\lambda^2)^{q-1}(d^2(\sigma(t))-\lambda^2)_+\big] \, dx\leq \LL^n(\Omega).$$
Letting first $\lambda \to \infty$, the Fatou Lemma yields
$$\int_\Omega (1+d^2(\sigma(t))^{q} \, dx\leq \LL^n(\Omega).$$
Passing then to the limit as $q \to \infty$, we deduce that $d(\sigma(t))=0$ $\LL^n$-a.e. in $\Omega$, hence the stress constraint \eqref{eq:sigmaK}.

\vskip10pt\noindent{\bf Step 5: The flow rule.} 
In order to conclude, it remains to prove that the flow rule holds true. To this aim we will show that for a.e. $t\in [0,T]$
\begin{equation}
\label{eq:equalityH}
H(\dot p(t))=[\sigma_D(t)\cdot \dot p(t)]\qquad \text{as measures on $\Om\cup \Gamma_D$},
\end{equation}
where the left-hand side is interpreted as a convex function of measure, while the right-hand side stands for the duality pairing between $\sigma_D(t)$ and $\dot p(t)$ (see Definition~\ref{def:duality}). Note that for a.e.\ $t\in[0,T]$ we have $\sigma(t)\in \mathcal S^{\rm dyn}_g$  by \eqref{eq:sigmaK}--\eqref{eq:Neumann} and $(\dot u(t), \dot e(t), \dot p(t) )\in \mathcal A^{\rm dyn}_{\dot w(t)}$  by kinematic compatibility, so that the duality pairing is well defined.  

Equality \eqref{eq:equalityH} immediately implies the flow rule \eqref{eq:fr}. Indeed, since $\dot u(t) \in H^1_{loc}(\Om;\R^n)$ and $\dot p(t) \in L^2_{loc}(\Om;\MD)$, it follows from the integration by parts formula in Sobolev spaces that  $[\sigma_D(t)\cdot \dot p(t)]\res \Om=\sigma_D(t)\cdot \dot p(t)$ in $\Om$, so that by \eqref{eq:equalityH}
$$H(\dot p(t))=\sigma(t)\cdot \dot p(t) \quad \LL^n\text{-a.e. in }\Om.$$
On the other hand, according to \cite[Lemma 3.8]{FG}, we have that $[\sigma_D(t)\cdot \dot p(t)]\res \Gamma_D=(\sigma(t)\nu)_{\tau} \cdot (\dot w(t)-\dot u(t))$ on $\Gamma_D$, so that \eqref{eq:equalityH} on $\Gamma_D$ rewrite as
$$H((\dot w(t)-\dot u(t))\odot \nu)=(\sigma(t)\nu)_\tau \cdot (\dot w(t)-\dot u(t)) \quad \HH^{n-1}\text{-a.e. on }\Gamma_D.$$

Since $\sigma \in \mathbf K$ a.e. in $\Om \times (0,T)$, by \cite[Proposition 2.4]{DMDSM}, the inequality $H(\dot p(t)) \geq [\sigma_D(t)\cdot \dot p(t)]$ holds as measures in $\R^n$ for a.e. $t \in [0,T]$. Thus, in order to prove \eqref{eq:equalityH} it is sufficient to show that
\begin{equation}
\label{eq:ineq-check}
\hs(\dot p(t)):=\int_{\Om\cup \Gamma_D}H\left(\frac{\dot p(t)}{|\dot p(t)|}\right)\,d|\dot p(t)|=[\sigma_D(t)\cdot \dot p(t)](\R^n)=\pscal{\sigma_D(t)}{\dot p(t)}.
\end{equation}
We will obtain this relation by means of an energy inequality.
\par
First of all let us note that for every $\xi\in \MD$
$$
\gamma^*_{\alpha_j,\lambda_j}(\xi)=\sup\big\{ \tau\cdot \xi -\gamma_{\alpha_j,\lambda_j}(\tau)\big\}\ge \sup_{\tau \in \K}\big\{ \tau_D\cdot \xi -\gamma_{\alpha_j,\lambda_j}(\tau_D)\big\}=H(\xi)-\frac{\alpha_j}{\alpha_j+1}.
$$
Using the energy balance in Remark \ref{rem:energy-balance}, we get that 
\begin{multline}
\label{eq:Dgamma1}
\frac12 \int_\Om \mathbf A \sigma_j(t)\cdot\sigma_j(t)\, dx + \frac12 \int_{\Om} |v_j(t)|^2\, dx
+  \int_0^t \int_\Om \big( H\left(D\gamma_{\alpha_j,\lambda_j}(\sigma_j)\right)-  \rho_D\cdot D\gamma_{\alpha_j,\lambda_j}(\sigma_j) \big)\, dx \,  ds\\
\leq \frac12 \int_\Om \mathbf A \sigma_0\cdot \sigma_0\, dx+\frac12 \int_\Om|v_0|^2\, dx\\
+\int_0^t \int_\Om\big( \dot v_j\cdot \dot w+\rho\cdot( \A\dot \sigma_j- E\dot w)+E\dot w\cdot \sigma_j\big)\,dx\, ds+\frac{\alpha_j}{\alpha_j+1} \LL^n(\Om)T.
\end{multline}
We claim that
\begin{multline}
\label{eq:DMDEM-lsc}
 \int_0^t \big( \HH(\dot p(s))-\langle \rho_D(s),\dot p(s)\rangle \big)\, ds\\
 \le \liminf_{j\to+\infty}
\int_0^t \int_\Om \big( H(D\gamma_{\alpha_j,\lambda_j}(\sigma_j))-  \rho_D\cdot D\gamma_{\alpha_j,\lambda_j}(\sigma_j)\big)\, dx \,  ds.
\end{multline}
If the claim is true, then by \eqref{conv-sN2}, \eqref{wconv-sN2}, and \eqref{est1-sN2}
we can pass to the limit in \eqref{eq:Dgamma1} and obtain
\begin{multline*}
\frac12 \int_\Om \mathbf A \sigma(t)\cdot \sigma(t)\, dx + \frac12 \int_{\Om} |v(t)|^2\, dx + \int_0^t \big( \HH(\dot p)-\langle \rho_D,\dot p\rangle\big) \, ds\\
\leq \frac12 \int_\Om \mathbf A \sigma_0\cdot\sigma_0\, dx+\frac12 \int_\Om|v_0|^2\, dx+\int_0^t \int_\Om \big( \dot v\cdot \dot w+ \rho\cdot (\A\dot \sigma-E\dot w)+E\dot w\cdot \sigma\big)\,dx\, ds,
\end{multline*}
which can be rewritten as 
\begin{multline*}
\frac12 \int_\Om \mathbf A \sigma(t)\cdot\sigma(t)\, dx - \frac12 \int_\Om \mathbf A \sigma_0\cdot\sigma_0\, dx + \int_0^t \HH(\dot p)\, ds \\
\leq \int_0^t \langle \rho_D,\dot p\rangle\, ds +\int_0^t \int_\Om \big(-\dot v\cdot (v-\dot w)+\rho\cdot( \A\dot \sigma-E\dot w)+E\dot w\cdot \sigma\big)\, dx\, ds.
\end{multline*}
By assumption $\rho(t)\in \mathcal S^{\rm dyn}_g$ for every $t\in[0,T]$. Therefore, by applying \eqref{intbyp} first to $\rho$ and then to $\sigma$ 
(recall that from \eqref{eq:motion} we have $-\Div \sigma= f-\dot v$ in $\Om$) we have
\begin{multline*}
\pscal{\rho_D}{\dot p}
=\int_\Om\big( f\cdot (v-\dot w)+\rho\cdot (E\dot w- \A\dot \sigma )\big)\, dx +\int_{\Gamma_N}g\cdot (v-\dot w)\, d\HH^{n-1}\\
=\int_\Om\big( (f-\dot v)\cdot (v-\dot w)+\rho\cdot (E\dot w- \A\dot \sigma )\big)\, dx +\int_{\Gamma_N}g\cdot (v-\dot w)\, d\HH^{n-1} + \int_\Om \dot v \cdot (v-\dot w)\, dx\\
=\pscal{\sigma_D}{\dot p}+\int_\Om \big(\sigma\cdot (\A\dot \sigma- E\dot w)+\dot v\cdot (v-\dot w)+\rho\cdot (E\dot w- \A\dot\sigma)\big)\, dx
\end{multline*}
a.e.\ in $[0,T]$.
We thus get
$$
\frac{1}{2}\int_\Om \A\sigma(t)\cdot\sigma(t)\, dx -\frac{1}{2}\int_\Om\A\sigma_0\cdot\sigma_0\, dx+\int_0^t \hs(\dot p)\,ds
\le\int_0^t\big(\pscal{\sigma_D}{\dot p}+\pscal{\sigma}{\A\dot \sigma}\big)\,ds,
$$
so that
$$
\int_0^t \big(\hs(\dot p)-\pscal{\sigma_D}{\dot p}\big)\,ds\le 0.
$$
We conclude that $\hs(\dot p(t))=\pscal{\sigma(t)}{\dot p(t)}$ for a.e. $t\in (0,T)$, that is, \eqref{eq:ineq-check} holds true.
\par

We now prove the claim \eqref{eq:DMDEM-lsc}, adapting an argument of the proof of \cite[Theorem~3.3]{DMDSM} to our evolutionary setting. Let $\varphi\in C^\infty_c(\R^n)$ be such that $0\le \varphi \le 1$ and $\varphi=0$ in a neighborhood of $\Gamma_N$.
Notice that, being the integrand nonnegative,
\begin{multline}
\label{eq:ineq-Hphi}
\int_\Om\big( H(D\gamma_{\alpha_j,\lambda_j}(\sigma_j))-  \rho_D\cdot D\gamma_{\alpha_j,\lambda_j}(\sigma_j)\big)\,dx\\
\ge
\int_\Om\big( H(\varphi D\gamma_{\alpha_j,\lambda_j}(\sigma_j))-  \rho_D\cdot D\gamma_{\alpha_j,\lambda_j}(\sigma_j)\varphi\big)\, dx.
\end{multline}
By integration by parts  we may write
\begin{multline*}
\int_\Om \rho_D\cdot D\gamma_{\alpha_j,\lambda_j}(\sigma_j)\varphi \,dx=\int_\Om \rho \cdot( E\dot w-\A\dot\sigma_j)\varphi\,dx\\
+\int_\Om \rho\cdot [(\dot w-v_j)\odot \nabla \varphi)]\,dx -\int_\Om f\cdot (\dot w-v_j)\varphi\,dx,
\end{multline*}
so that by \eqref{est1-sN2}, \eqref{wconv-sN2}, and the Dominated Convergence Theorem
\begin{equation}
\begin{split}
& \lim_{j\to+\infty}\int_0^t \int_\Om \rho_D\cdot D\gamma_{\alpha_j,\lambda_j}(\sigma_j)\varphi \,dx
\\
& =  \int_0^t \left(\int_\Om \rho \cdot( E\dot w-\A\dot\sigma)\varphi\,dx+\int_\Om \rho\cdot [(\dot w-v)\odot \nabla \varphi)]\,dx -\int_\Om f\cdot (\dot w-v)\varphi\,dx\right)ds
\\
& =  \int_0^t \pscal{[\rho_D\cdot \dot p]}{\varphi}\,ds,\label{eq:dual-0t}
\end{split}
\end{equation}
where in the last equality we used \eqref{eq:duality-distr}.
On the other hand, since the convex functional $\mathcal H$ is sequentially weakly* lower semicontinuous on $\mathcal M(\Om\cup \Gamma_D; \MD)$, 
by \cite[Proposition 2.31]{AFP} there exists a sequence $\{\eta_k\}_{k\in\N}\subset C_0(\Om\cup \Gamma_D;\MD)$ such that 
$$
\mathcal H(q)= \sup_{k\in \N} \int_{\Om\cup \Gamma_D} \eta_k\cdot dq
$$
for every $q\in \mathcal M(\Om\cup \Gamma_D;\MD)$.
Now, using \eqref{eq:conv-pdot}, for every finite family $\{A_1,\dots, A_h\}$ of open disjoint intervals in $(0,t)$ we may write
\begin{eqnarray*}
\liminf_{j\to+\infty} \int_0^t \int_\Om H(\varphi D\gamma_{\alpha_j,\lambda_j}(\sigma_j(s)))\,dx\, ds
& \ge &
\liminf_{j\to+\infty} \sum_{k=1}^h \int_{A_k} \int_\Om H(\varphi D\gamma_{\alpha_j,\lambda_j}(\sigma_j(s)))\,dx\, ds
\\
& \ge & 
\liminf_{j\to+\infty} \sum_{k=1}^h \int_{A_k} \int_{\Om} \eta_k\cdot D\gamma_{\alpha_j,\lambda_j}(\sigma_j(s))\varphi \,dx\,ds
\\
& = & \sum_{k=1}^h \int_{A_k} \int_{\Om\cup \Gamma_D} \varphi\eta_k \cdot d\dot p(s)\,ds,
\end{eqnarray*}
so that, in view of Lemma~\ref{lemma:AFP} in the Appendix (applied to $U=(0,t)$, $\lambda=\mathcal L^1$ and $s \mapsto f_k(s):= \int_{\Om\cup \Gamma_D} \varphi\eta_k \cdot d\dot p(s)$) and of the positivity of $\mathcal H$, taking the sup over $k$ yields
\begin{equation}
\label{eq:Hphi}
\liminf_{j\to+\infty} \int_0^t \int_\Om H(\varphi D\gamma_{\alpha_j,\lambda_j}(\sigma_j(s))\,dx\, ds
\ge \int_0^t \hs(\varphi \dot p(s))\,ds.
\end{equation}
Gathering \eqref{eq:ineq-Hphi}--\eqref{eq:Hphi}, we conclude
\begin{multline*}
\liminf_{j\to+\infty} \int_0^t \int_\Om\big( H(D\gamma_{\alpha_j,\lambda_j}(\sigma_j))-  \rho_D\cdot D\gamma_{\alpha_j,\lambda_j}(\sigma_j)\big)\, dx\\
\ge 
\int_0^t \hs(\varphi \dot p(s))\,ds-\int_0^t \pscal{[\rho_D\cdot \dot p]}{\varphi}\,ds,
\end{multline*}
so that \eqref{eq:DMDEM-lsc} follows by letting $\varphi$ tend to ${\bf 1}_{\Om\cup \Gamma_D}$ (recall that $|[\rho_D\cdot \dot p]|\le C|\dot p|$ as measures).

\vskip10pt\noindent{\bf Step 6: Uniqueness.} The uniqueness of the solution is standard and follows, e.g., from \cite[Section~4.5]{BL} (see also \cite{BMo2}).
\end{proof}

\section{The quasi-static model}\label{sec:qst}

The regularity result for the dynamical model can be adapted to the quasi-static setting under some slightly different assumptions on the prescribed displacements and exterior loads, with some changes in the proofs. In this section we outline the main results. 

\medskip

\noindent $(H'_4)$  We  consider boundary displacements of the form
$$w\in H^1(0,T;H^1(\Omega;\R^n)).$$

\medskip

\noindent $(H'_5)$ As in $(H_5)$, we consider exterior loads $\LL(t)$ associated to a potential $\rho(t)$ according to \eqref{eq:LL}, with regularity
$$
\rho\in H^1(0,T;H_{\Div}(\Om;\Mn)), \quad
\rho_D\in W^{1,\infty}(0,T;L^\infty(\Om ;\MD)),
$$
and satisfying the uniform safe-load condition \eqref{eq:rhoK}.

\subsection{Norton-Hoff approximation} 

We start by proving an existence result for a Norton-Hoff approsimation of the quasistatic problem.

\begin{theorem}[\bf Quasi-static Norton-Hoff approximation]
\label{thm:psialpha}
Assume hypotheses $(H_1)$--$(H_3)$, $(H'_4)$, $(H'_5)$ and $(H_6)$. Then, for every $\alpha\in(0,1]$ and $\lambda>0$ the problem
\begin{equation}\label{NH-pb}
\begin{cases}
\A\dot\sigma^{\alpha,\lambda}+D\gamma_{\alpha,\lambda}(\sigma^{\alpha,\lambda})=Ev^{\alpha,\lambda} & \text{ in }\Omega \times (0,T),
\\
-\Div\sigma^{\alpha,\lambda}=f & \text{ in }\Omega\times (0,T),\\
v^{\alpha,\lambda}=\dot w & \text{ on } \Gamma_D \times (0,T), \\
\sigma^{\alpha,\lambda}\nu=g&\text{  on }\Gamma_N \times (0,T),\\
\sigma^{\alpha,\lambda}(0)=\sigma_0 &\text{ in }\Om
\end{cases}
\end{equation}
admits one and only one solution $(\sigma^{\alpha,\lambda}, v^{\alpha,\lambda})$ with
$$
\sigma^{\alpha,\lambda}\in H^1 (0,T;H_{\Div}(\Om;\Mn))
$$
and
$$
v^{\alpha,\lambda}\in L^2(0,T;H^1(\Om;\R^n)).
$$
Moreover, the following estimates hold:
\begin{equation}\label{est1-sN}
\sup_{t\in[0,T]} \|\sigma^{\alpha,\lambda}(t)\|_{L^2(\Om)}\leq C, \qquad  \int_0^T \|\dot\sigma^{\alpha,\lambda}(t)\|^2_{L^2(\Om)}\, dt  \leq C,
\end{equation}
\begin{equation}\label{est2-sN}
\int_0^T\int_\Omega \big(1+d^2(\sigma^{\alpha,\lambda}) \wedge \lambda^2\big)^{\frac1{2\alpha}-\frac12} d^\ell(\sigma^{\alpha,\lambda})\, dx\, dt\leq C \quad \text{ for } \ell=1,2,
\end{equation}
\begin{equation}\label{est2-uni}
\int_0^T \int_\Omega \big(1+d^2(\sigma^{\alpha,\lambda}) \wedge \lambda^2\big)^{\frac1{2\alpha}+\frac12} \, dx\, dt \leq C\Big(1+\frac1\alpha\Big),
\end{equation}
and
\begin{equation}\label{est1-vN}
\|v^{\alpha,\lambda}\|_{L^2(0,T;BD(\Omega))} \leq C,
\end{equation}
where $C$ is a constant independent of $\alpha$ and $\lambda$.
\end{theorem}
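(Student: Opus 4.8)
The plan is to mimic the proof of Theorem~\ref{thmNH-dyn}, using that, without the inertial term $\dot v$, the system is parabolic in $\sigma$ and one may test directly with $\dot\sigma$ instead of with time difference quotients. Fix $\alpha\in(0,1]$ and $\lambda>0$. Using the potential $\rho$ of $(H'_5)$, I would write $\sigma=\rho+\zeta$ and note that the constraints $-\Div\sigma=f$ in $\Om$ and $\sigma\nu=g$ on $\Gamma_N$ are equivalent to $\zeta(t)\in Y$, where $Y:=\{\tau\in H_{\Div}(\Om;\Mn):\Div\tau=0,\ \tau\nu=0\text{ on }\Gamma_N\}$ coincides with the $L^2$-orthogonal complement of $\{E\psi:\psi\in H^1_{\Gamma_D}(\Om;\R^n)\}$; by $(H_6)$ one has $\zeta(0)=\sigma_0-\rho(0)\in Y$. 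Since $v-\dot w\in H^1_{\Gamma_D}(\Om;\R^n)$, testing the first equation of \eqref{NH-pb} against $\tau\in Y$ eliminates the velocity and yields, for a.e.\ $t$,
\[
\int_\Om\A\dot\zeta(t)\cdot\tau\,dx=\int_\Om\bigl(E\dot w(t)-\A\dot\rho(t)-D\gamma_{\alpha,\lambda}(\zeta(t)+\rho(t))\bigr)\cdot\tau\,dx\qquad\text{for all }\tau\in Y.
\]
By Lax--Milgram this determines $\dot\zeta(t)$ as a function of $\zeta(t)$, valued in $Y$, Lipschitz uniformly in $t$ — here the truncation $\lambda$ is essential, since by \eqref{Dgamma} the map $D\gamma_{\alpha,\lambda}$ is globally Lipschitz on $\Mn$ for fixed $(\alpha,\lambda)$ — with the inhomogeneous term in $L^2(0,T;Y)$ by $(H'_4)$--$(H'_5)$. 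The Cauchy--Lipschitz theorem in the Hilbert space $Y$ then gives a unique $\zeta\in H^1(0,T;Y)$, hence $\sigma=\rho+\zeta\in H^1(0,T;H_{\Div}(\Om;\Mn))$; by the choice of $\zeta$, $\A\dot\sigma(t)+D\gamma_{\alpha,\lambda}(\sigma(t))-E\dot w(t)\perp Y$ in $L^2$, so it equals $E\psi(t)$ for some $\psi(t)\in H^1_{\Gamma_D}(\Om;\R^n)$, and $v:=\dot w+\psi\in L^2(0,T;H^1(\Om;\R^n))$ solves \eqref{NH-pb}; uniqueness of $\sigma$ follows from Grönwall applied to the difference of two solutions. (This well-posedness may alternatively be quoted from the ODE-based result in the Appendix.)

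\textbf{Uniform estimates, part I.} From now on $C$ is independent of $(\alpha,\lambda)$. Testing the first equation of \eqref{NH-pb} with $\sigma-\rho\in Y$ and using $\int_\Om Ev\cdot(\sigma-\rho)\,dx=\int_\Om E\dot w\cdot(\sigma-\rho)\,dx$ (since $v-\dot w\in H^1_{\Gamma_D}$ and $\sigma-\rho\in Y$) gives, for a.e.\ $t$,
\[
\int_\Om\A\dot\sigma\cdot(\sigma-\rho)\,dx+\int_\Om D\gamma_{\alpha,\lambda}(\sigma)\cdot(\sigma-\rho)\,dx=\int_\Om E\dot w\cdot(\sigma-\rho)\,dx.
\]
As $D\gamma_{\alpha,\lambda}(\sigma)\in\MD$, \eqref{Dg-2} and the safe-load condition \eqref{eq:rhoK} yield $D\gamma_{\alpha,\lambda}(\sigma)\cdot(\sigma-\rho)\ge\bigl(r_K-\|\rho_D\|_{L^\infty}\bigr)|D\gamma_{\alpha,\lambda}(\sigma)|\ge c\,|D\gamma_{\alpha,\lambda}(\sigma)|$. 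Rewriting the first term as $\tfrac12\tfrac{d}{dt}\!\int_\Om\A(\sigma-\rho)\cdot(\sigma-\rho)\,dx+\int_\Om\A\dot\rho\cdot(\sigma-\rho)\,dx$, Young's inequality and Grönwall's lemma give $\sup_t\|\sigma(t)-\rho(t)\|_{L^2}\le C$ and $\int_0^T\|D\gamma_{\alpha,\lambda}(\sigma)\|_{L^1(\Om)}\,dt\le C$; on the other hand, bounding that first term directly by $\|\A\dot\sigma(t)\|_{L^2}\|\sigma(t)-\rho(t)\|_{L^2}$ and invoking the $\sup$ bound just obtained gives the sharper estimate $\|D\gamma_{\alpha,\lambda}(\sigma(t))\|_{L^1(\Om)}\le C\bigl(\|E\dot w(t)\|_{L^2}+\|\dot\sigma(t)\|_{L^2}\bigr)$ for a.e.\ $t$. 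Integrating the identity in time (using $\dot\rho\in L^2(0,T;L^2)$ and the $\sup$ bound) also yields $\int_0^T\!\!\int_\Om D\gamma_{\alpha,\lambda}(\sigma)\cdot\sigma\,dx\,dt\le C$, whence \eqref{est2-sN} by \eqref{eq:Dg-1}--\eqref{Dg-2}, and — since convexity gives $D\gamma_{\alpha,\lambda}(\sigma)\cdot\sigma\ge\gamma_{\alpha,\lambda}(\sigma)-\tfrac{\alpha}{\alpha+1}\ge\tfrac{\alpha}{\alpha+1}\bigl((1+d^2(\sigma)\wedge\lambda^2)^{\frac1{2\alpha}+\frac12}-1\bigr)$ — also \eqref{est2-uni}.

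\textbf{Uniform estimates, part II, and conclusion.} Testing the first equation of \eqref{NH-pb} with $\dot\sigma-\dot\rho=\dot\zeta\in Y$, using $\int_\Om D\gamma_{\alpha,\lambda}(\sigma)\cdot\dot\sigma\,dx=\tfrac{d}{dt}\!\int_\Om\gamma_{\alpha,\lambda}(\sigma)\,dx$ (chain rule, $\gamma_{\alpha,\lambda}$ being $C^1$ with globally Lipschitz gradient and $\sigma\in H^1(0,T;L^2)$) and $D\gamma_{\alpha,\lambda}(\sigma)\in\MD$, and integrating in time gives
\[
\int_0^t\!\!\int_\Om\A\dot\sigma\cdot\dot\sigma\,dx\,ds+\int_\Om\gamma_{\alpha,\lambda}(\sigma(t))\,dx=\int_\Om\gamma_{\alpha,\lambda}(\sigma_0)\,dx+\int_0^t\!\!\int_\Om\bigl(\A\dot\sigma\cdot\dot\rho+D\gamma_{\alpha,\lambda}(\sigma)\cdot\dot\rho_D+E\dot w\cdot(\dot\sigma-\dot\rho)\bigr)dx\,ds.
\]
Here $\int_\Om\gamma_{\alpha,\lambda}(\sigma_0)\,dx=\tfrac{\alpha}{\alpha+1}\LL^n(\Om)\le\LL^n(\Om)$ because $\sigma_0\in\K$ forces $d(\sigma_0)=0$; the $\dot\rho_D$-term is bounded by $\|\dot\rho_D\|_{L^\infty(0,T;L^\infty)}\int_0^T\|D\gamma_{\alpha,\lambda}(\sigma)\|_{L^1}\,dt\le C$; the remaining terms are absorbed by Young's inequality and the coercivity \eqref{coercA} of $\A$, using $E\dot w,\dot\rho\in L^2(0,T;L^2)$. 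This gives $\int_0^T\|\dot\sigma\|_{L^2}^2\,dt\le C$, hence (with $\rho\in C([0,T];L^2)$ and the part-I bound) $\sup_t\|\sigma(t)\|_{L^2}\le C$, i.e.\ \eqref{est1-sN}. Finally, writing $v(t)=\dot w(t)+\psi(t)$ with $\psi(t)\in H^1_{\Gamma_D}$ and $E\psi(t)=\A\dot\sigma(t)+D\gamma_{\alpha,\lambda}(\sigma(t))-E\dot w(t)$, the Poincar\'e--Korn inequality in $BD$ (vanishing trace on $\Gamma_D$) gives $\|\psi(t)\|_{BD(\Om)}\le C\|E\psi(t)\|_{L^1(\Om)}\le C\bigl(\|\dot\sigma(t)\|_{L^2}+\|D\gamma_{\alpha,\lambda}(\sigma(t))\|_{L^1}+\|E\dot w(t)\|_{L^2}\bigr)$; combining with the sharp pointwise $L^1$ bound of part I, squaring and integrating in time yields \eqref{est1-vN} from \eqref{est1-sN} and $(H'_4)$.

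\textbf{Main difficulty.} None of this is deep: the points requiring care are the functional-analytic set-up of the first step (choosing the constraint space $Y$, reconstructing $v$ through the Helmholtz-type splitting, and the global-in-time solvability of the ODE guaranteed by the $\lambda$-truncation), and keeping \emph{every} constant independent of $(\alpha,\lambda)$ — which rests on the uniform safe-load constant $c>0$ in \eqref{eq:rhoK} and on $\sigma_0\in\K$ (so that $\int_\Om\gamma_{\alpha,\lambda}(\sigma_0)\,dx$ stays bounded), exactly as in the dynamic case.
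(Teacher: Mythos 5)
Your proposal is correct, and at its core it follows the same strategy as the paper: existence and uniqueness via the Cauchy--Lipschitz/ODE argument in the affine space $\rho+Y$ (this is exactly Lemma~\ref{lm:CL} in the Appendix, which you correctly note could simply be quoted, the $\lambda$-truncation making $D\gamma_{\alpha,\lambda}$ globally Lipschitz), the safe-load condition \eqref{eq:rhoK} combined with \eqref{Dg-2} to control $\|D\gamma_{\alpha,\lambda}(\sigma)\|_{L^1}$, an energy estimate involving $\dot\sigma$ and $\gamma_{\alpha,\lambda}$ for the second bound in \eqref{est1-sN}, the convexity inequality $D\gamma_{\alpha,\lambda}(\xi)\cdot\xi\ge\gamma_{\alpha,\lambda}(\xi)-\tfrac{\alpha}{\alpha+1}$ for \eqref{est2-uni}, and Poincar\'e--Korn in $BD$ for \eqref{est1-vN}. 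The one genuine (if modest) difference is organizational: you systematically test with elements of $Y$ ($\sigma-\rho$ in part I, $\dot\sigma-\dot\rho$ in part II), which kills the velocity term outright and lets you handle the load through $D\gamma_{\alpha,\lambda}(\sigma)\cdot\dot\rho_D$ and the previously established $L^1$-in-time bound; the paper instead tests with $\dot\sigma$ itself and disposes of the spherical part of $\int Ev\cdot\dot\rho$ via the identity $\Div v={\rm tr}(\A\dot\sigma)$, which couples the estimates for $\|\sigma\|_{L^2}$, $\|\dot\sigma\|_{L^2}$ and $\|Ev\|_{L^1}$ and requires the Gronwall argument of its Step 1. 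Your ordering decouples these steps (the $\sup_t\|\sigma-\rho\|_{L^2}$ and $\int_0^T\|D\gamma_{\alpha,\lambda}(\sigma)\|_{L^1}$ bounds come first from a single Gronwall, then the $\dot\sigma$ bound follows directly), which is slightly cleaner, at the price of relying a bit more explicitly on the structural fact that $Y$ is the $L^2$-orthogonal complement of $\{E\psi:\psi\in H^1_{\Gamma_D}(\Om;\R^n)\}$ — the same fact underlying the paper's Appendix lemma. All constants you produce are indeed independent of $(\alpha,\lambda)$, and the sharp pointwise-in-time bound $\|D\gamma_{\alpha,\lambda}(\sigma(t))\|_{L^1}\le C(\|E\dot w(t)\|_{L^2}+\|\dot\sigma(t)\|_{L^2})$ plays the role of the paper's estimate \eqref{eq:S3} on $\|Ev(t)\|_{L^1}$, so \eqref{est1-vN} follows exactly as in the paper.
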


\begin{proof}
As before, in order to simplify notation, we omit  the explicit dependence on $\lambda$ and $\alpha$.
Lemma~\ref{lm:CL} in the Appendix guarantees the existence and uniqueness of a pair
$$
(\sigma,v)\in H^1(0,T; H_{\Div}(\Om;\Mn)) \times L^2(0,T; H^1(\Omega;\R^n))
$$
satisfying \eqref{NH-pb}. We now focus on the estimates. 
As before we will denote by $Q_t$ the space time cylinder $\Om\times (0,t)$.

\vskip10pt\noindent{\bf Step 1: Estimates for $\sigma$ and $\dot\sigma$.} Testing the first equation of \eqref{NH-pb} with $\dot \sigma$ and integrating in time on $[0,t]$ we get
\begin{multline}
\label{eq:energyS}
\iint_{Q_t}\A \dot \sigma \cdot \dot \sigma\, dx\,ds+\int_\Om \gamma(\sigma(t))\,dx-\int_\Om \gamma(\sigma_0)\,dx=\iint_{Q_t} Ev \cdot \dot \sigma\, dx \,ds\\
=\iint_{Q_t}\big( (Ev-E\dot w)\cdot \dot \sigma+E\dot w\cdot \dot\sigma\big)\, dx\,ds=
\iint_{Q_t} \big( (Ev-E\dot w)\cdot \dot \rho+E\dot w\cdot \dot\sigma\big)\, dx \,ds.
\end{multline}
Since  $D\gamma(\sigma)$ is a deviatoric matrix, the first equation of \eqref{NH-pb} gives $\Div v={\rm tr}(\A \dot\sigma)$, so that 
\begin{eqnarray*}
\int_\Om Ev\cdot \dot \rho\, dx & = & \int_\Om E_Dv\cdot \dot \rho_D\, dx+\frac{1}{n}\int_\Om (\Div v){\rm tr}(\dot \rho)\, dx \\
& = & \int_\Om E_Dv\cdot \dot \rho_D\, dx +\frac{1}{n}\int_\Om {\rm tr}(\A\dot\sigma) {\rm tr}(\dot \rho)\,dx,
\end{eqnarray*}
and we infer thanks to the assumptions $(H'_5)$ on $\rho$
$$
\iint_{Q_t} Ev\cdot \dot \rho\, dx \,ds\le C\int_0^t \left(\|Ev(s)\|_{L^1(\Om)}+\|\dot \rho(s)\|_{L^2(\Om)}\|\dot \sigma(s)\|_{L^2(\Om)}\right)\,ds.
$$
From \eqref{eq:energyS}, the regularity assumptions on $w$ and $\rho$ together with the Cauchy Inequality, we conclude that 
\begin{equation}
\label{eq:S1}
\int_0^t \|\dot \sigma(s)\|_{L^2(\Om)}^2\,ds\le C\left( 1+\int_0^t\|Ev(s)\|_{L^1(\Om)}\, ds\right).
\end{equation}
Let us estimate the right-hand side of \eqref{eq:S1}. Following computations similar to those of Step~2 in the proof of Theorem \ref{thmNH-dyn}, thanks to \eqref{Dg-2} and the safe load condition \eqref{eq:rhoK}, we have for a.e. $s \in (0,T)$
\begin{multline*}
\int_\Om (Ev(s)-\A \dot \sigma(s))\cdot (\sigma(s)-\rho(s))\, dx=\int_\Om D\gamma(\sigma(s))\cdot \sigma(s)\, dx -\int_\Om D\gamma(\sigma(s))\cdot \rho(s)\, dx\\
\ge r_K \int_\Om |D\gamma(\sigma(s))|\,dx-\|\rho_D(s)\|_{L^\infty(\Om)} \int_\Om |D\gamma(\sigma(s))|\,dx
\ge c\int_\Om |D\gamma(\sigma(s))|\,dx,
\end{multline*}
where $c>0$. Since $|Ev(s)|\le |\A\dot\sigma(s)|+|D\gamma(\sigma(s))|$, we obtain
\begin{equation}\label{eq:S1-1}
\int_\Om |Ev(s)|\,dx\le \int_\Om |\A\dot\sigma(s)|\,dx+\frac{1}{c}\int_\Om (Ev(s)-\A \dot \sigma(s))\cdot (\sigma(s)-\rho(s))\, dx.
\end{equation}
Let us estimate the term $\int_\Om Ev(s)\cdot (\sigma(s)-\rho(s))\, dx$. Since
$$
\int_\Om \sigma(s)\cdot (Ev(s)-E\dot w(s))\, dx =\int_\Om \rho(s)\cdot (Ev(s)-E\dot w(s))\, dx,
$$
we get
$$
\int_\Om (\sigma(s)-\rho(s))\cdot Ev(s)\, dx =\int_\Om (\sigma(s)-\rho(s))\cdot E\dot w(s)\, dx,
$$
so that, using again the assumptions on $\rho$, we obtain from \eqref{eq:S1-1} that
\begin{multline}
\label{eq:S3}
\int_\Om |Ev(s)|\,dx \le C \|\dot \sigma(s)\|_{L^2(\Om)} \\
+ C (\|\dot \sigma(s)\|_{L^2(\Om)} +\|E\dot w(s)\|_{L^2(\Om)}) (\|\sigma(s)\|_{L^2(\Om)}+\|\rho(s)\|_{L^2(\Om)}).
\end{multline}
Combining \eqref{eq:S1} and \eqref{eq:S3}, and using the Cauchy inequality we deduce
\begin{equation}\label{eq:S3-1}
\int_0^t \|\dot \sigma(s)\|_{L^2(\Om)}^2\,ds\le C\left( 1+\int_0^t \|\sigma(s)\|_{L^2(\Om)}^2\,ds\right).
\end{equation}
On the other hand, absolute continuity in time yields
$$
\|\sigma(t)\|_{L^2(\Om)}^2\le  C+2T\int_0^t \|\dot \sigma(s)\|_{L^2(\Om)}^2\,ds 
$$
for every $t\in[0,T]$, so that by \eqref{eq:S3-1}
$$
\|\sigma(t)\|_{L^2(\Om)}^2\le  C\left( 1+\int_0^t \|\sigma(s)\|_{L^2(\Om)}^2\,ds\right)
$$
for every $t\in[0,T]$.
By applying the Gronwall Lemma we finally deduce the first bound in \eqref{est1-sN}, which, in turn, together with \eqref{eq:S3-1}, implies the second bound in \eqref{est1-sN}.

\vskip10pt\noindent{\bf Step 2: Further estimates.} From \eqref{eq:S3} we deduce that 
$$
Ev \in L^2(0,T;L^1(\Om;\Mn)),
$$
so that \eqref{est1-vN} follows by the Poincar\'e-Korn inequality and the boundary condition $v=\dot w$ on~$\Gamma_D$. 
\par
We now prove \eqref{est2-sN} and \eqref{est2-uni}. We multiply the first equation of \eqref{NH-pb} by $\sigma$ and the second equation by $v$ and integrate in space. Using again that $\Div v={\rm tr}(\A \dot\sigma)$, we thus get that for a.e. $t \in (0,T)$
\begin{eqnarray*}
\int_\Om D\gamma(\sigma(t))\cdot \sigma(t)\, dx & = & -\int_\Om \A \dot \sigma(t) \cdot \sigma(t)\, dx+ \int_\Om \sigma(t)\cdot E\dot w(t)\, dx\\
&&+\int_\Om \rho(t)\cdot (Ev(t)-E\dot w(t))\, dx\\
& = & -\int_\Om \A \dot \sigma(t)\cdot \sigma(t)\, dx+ \int_\Om \sigma(t) \cdot E\dot w(t)\, dx +\int_\Om \rho_D(t)\cdot E_Dv(t)\, dx\\
&& {}+\frac{1}{n}\int_\Om {\rm tr}(\rho(t)){\rm tr}(\A\dot \sigma(t))\, dx-\int_\Om \rho(t)\cdot E\dot w(t)\, dx.
\end{eqnarray*}
It thus follows that
$$
\int_{0}^{T} \int_\Om D\gamma(\sigma)\cdot \sigma\, dx\,dt\le C.
$$
This immediately provides \eqref{est2-sN} by \eqref{eq:Dg-1} and \eqref{Dg-2}. On the other hand, the convexity of $\gamma$ ensures that $D\gamma(\xi)\cdot \xi \geq \gamma(\xi)-\frac{\alpha}{\alpha+1}$ for every $\xi\in\Mn$, hence
$$\int_0^T \int_\Omega\gamma(\sigma) \, dx\,dt \leq C+\frac{\alpha}{\alpha+1}T\LL^n(\Om),$$
from which \eqref{est2-uni} follows.
\end{proof}

\begin{remark}
As in the dynamical case (see Remark \eqref{rem:energy-balance}), the 
 following energy balance holds: for all $t \in [0,T]$,
\begin{multline*}
\frac12 \int_\Om \mathbf A \sigma^{\alpha,\lambda}(t)\cdot \sigma^{\alpha,\lambda}(t)\, dx\\
 +  \int_0^t\int_{\Om} \big(\gamma_{\alpha,\lambda}(\sigma^{\alpha,\lambda})+\gamma^*_{\alpha,\lambda}\left(D\gamma_{\alpha,\lambda}(\sigma^{\alpha,\lambda})\right) -\rho_D\cdot D\gamma_{\alpha,\lambda}(\sigma^{\alpha,\lambda})\big)\,dx \, ds\\
= \frac12 \int_\Om \mathbf A \sigma_0\cdot \sigma_0\, dx+\int_0^t\int_{\Om}\big(\rho\cdot (\A\dot \sigma^{\alpha,\lambda}- E\dot w)+E\dot w\cdot \sigma^{\alpha,\lambda}\big)\,dx\, ds,
\end{multline*}
where $\gamma^*_{\alpha,\lambda}$ stands for the convex conjugate of $\gamma_{\alpha,\lambda}$.
\end{remark}

We now prove higher regularity of the stress $\sigma^{\alpha,\lambda}$ with uniform estimates with respect to $(\alpha,\lambda)$ in dimension $n\leq 4$.

\begin{theorem}[\bf Higher spatial regularity for the stress in dimension $n\le 4$]
\label{prop:reg-sN}
In addition to the assumptions of Theorem~\ref{thm:psialpha}, suppose further that 
\begin{itemize}
\item $\partial K$ is of class $C^2$ and its second fundamental form is positive definite at every point of $\partial K$;
\item $\sigma_0\in H^1_{loc}(\Omega;\Mn)$;
\item $f\in L^\infty(0,T;W^{1,\infty}_{loc}(\Om;\R^n))$. 
\end{itemize}
If $n\le 4$, then for every open set $\omega\subset\subset\Omega$ there exists a constant $C_\omega>0$, depending on $\omega$ but independent of $\alpha$ and $\lambda$,
such that 
\begin{equation}\label{prop-eq}
\sup_{t\in[0,T]}\|\nabla \sigma^{\alpha,\lambda}(t)\|_{L^2(\omega)}\leq C_\omega.
\end{equation}
\end{theorem}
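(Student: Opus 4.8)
The plan is to adapt the proof of Theorem~\ref{prop:reg-sN2}, the one new difficulty being that in the quasi-static regime the only uniform information on the velocity is $v^{\alpha,\lambda}\in L^2(0,T;BD(\Om))$ from \eqref{est1-vN}: there is no uniform $L^2$- or $H^1$-bound on $v^{\alpha,\lambda}$, so the velocity terms must be treated by integration by parts rather than by a Cauchy--Schwarz pairing against $\nabla v$. Throughout I would write $\sigma\equiv\sigma^{\alpha,\lambda}$, $v\equiv v^{\alpha,\lambda}$, $\gamma\equiv\gamma_{\alpha,\lambda}$, $Q_t:=\Om\times(0,t)$, fix $\omega\subset\subset\omega'\subset\subset\Om$ and a cut-off $\varphi=\psi^m\in C^\infty_c(\omega';[0,1])$ with $\psi=1$ on $\omega$ and $m$ a large even integer, so that $|\nabla\varphi|^2\le C\varphi$, $|\nabla(\varphi^{1/2})|\le C$, and every cut-off factor produced below still carries a power of $\psi$ at least $m/2$. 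It suffices to bound $\nabla\sigma_D$ uniformly in $L^\infty(0,T;L^2(\omega))$, since the equilibrium equation $-\Div\sigma=f$ gives, for every $k$, $\partial_k\sigma=\Sigma_{\cdot\cdot k}-f_k\,{\rm Id}$ with $\Sigma_{ijk}:=\partial_k(\sigma_D)_{ij}-(\Div\sigma_D)_k\delta_{ij}$, whence $|\nabla\sigma|\le C(|\nabla\sigma_D|+|f|)$ pointwise. As in Step~1 of the proof of Theorem~\ref{prop:reg-sN2} — but dropping the kinetic term, using the (non-uniform) bound $v\in L^2(0,T;H^1(\Om))$ from Theorem~\ref{thm:psialpha}, taking difference quotients $\partial^h_k$ of the first equation in \eqref{NH-pb}, testing with $\varphi\,\partial^h_k\sigma$, integrating by parts via $-\Div\partial^h_k\sigma=\partial^h_kf$, discarding the nonnegative term $\iint_{Q_t}\varphi\,\partial^h_kD\gamma(\sigma)\cdot\partial^h_k\sigma$, and closing by Young's inequality — one first obtains the qualitative regularity $\sigma\in L^\infty(0,T;H^1_{loc}(\Om;\Mn))$ (with an $(\alpha,\lambda)$-dependent constant), which makes the rest of the argument rigorous.

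Next I would establish the basic identity: passing to the limit $h\to0$, multiplying the $x_k$-differentiated first equation of \eqref{NH-pb} by $\varphi\,\partial_k\sigma$, integrating over $Q_t$ and summing over $k$,
\[
\tfrac12\!\int_\Om\!\varphi\,\A\partial_k\sigma(t)\cdot\partial_k\sigma(t)\,dx+\!\iint_{Q_t}\!\!\varphi\,\partial_kD\gamma(\sigma)\cdot\partial_k\sigma\,dx\,ds=\tfrac12\!\int_\Om\!\varphi\,\A\partial_k\sigma_0\cdot\partial_k\sigma_0\,dx+\!\iint_{Q_t}\!\!\varphi\,E(\partial_kv)\cdot\partial_k\sigma\,dx\,ds,
\]
the second term on the left being bounded below by $\iint_{Q_t}\varphi\,(1+d^2(\sigma)\wedge\lambda^2)^{\frac1{2\alpha}-\frac12}\frac{C_Kd(\sigma)}{1+C_Kd(\sigma)}|\partial_k\sigma_D|^2$ thanks to \eqref{eq:formD2gamma2}. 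Integrating by parts in the symmetric-gradient direction and using $\Div\partial_k\sigma=-\partial_kf$, the last term equals $\iint_{Q_t}\varphi\,\nabla v:\nabla f-\sum_k\iint_{Q_t}\partial_kv_m\,\partial_k\sigma_{ml}\,\partial_l\varphi$. The term $\iint_{Q_t}\varphi\,\nabla v:\nabla f$ is made uniform by integrating by parts again (moving the derivative of $f$ onto $\varphi$ and onto $\partial_jv$) and invoking the algebraic identity $\Delta v=2\Div(Ev)-\nabla(\Div v)$ together with $Ev=\A\dot\sigma+D\gamma(\sigma)$ and $\Div v={\rm tr}(\A\dot\sigma)$: since $\A\dot\sigma\in L^2(Q_T)$, $D\gamma(\sigma)\in L^1(Q_T)$ and $v\in L^2(0,T;L^1(\Om))$ uniformly (by \eqref{est1-sN}, \eqref{Dg-2}, \eqref{est2-sN}, \eqref{est1-vN}) and $f\in L^\infty(0,T;W^{1,\infty}_{loc})$, all the resulting terms are uniformly controlled. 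In the other term one inserts $\partial_kv_m=2(Ev)_{km}-\partial_mv_k$ and $\partial_k\sigma_{ml}=\Sigma_{mlk}-f_k\delta_{ml}$ and integrates by parts repeatedly, always moving derivatives so as to land on $\varphi$ or on $f$, or to produce $\Div v={\rm tr}(\A\dot\sigma)$ or the combination $Ev$ (also using $\sigma_D\cdot D\gamma(\sigma)\le(1+d^2(\sigma)\wedge\lambda^2)^{\frac1{2\alpha}-\frac12}(d^2(\sigma)+R_Kd(\sigma))$, which is in $L^1(Q_T)$ by \eqref{est2-sN}). After collecting the uniformly bounded contributions and those that are multiples of $\iint_{Q_t}\varphi\,\A\partial_k\sigma\cdot\partial_k\sigma$, two genuinely critical terms survive,
\[
{\rm I}:=C\!\iint_{Q_t}\!(1+d^2(\sigma)\wedge\lambda^2)^{\frac1{2\alpha}-\frac12}d(\sigma)\,|\nabla\sigma_D|\,|\nabla\varphi|\,dx\,ds,\qquad {\rm II}:=C\!\iint_{Q_t}\!\varphi^{1/2}|\sigma_D|\,|v|\,dx\,ds.
\]

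The term ${\rm I}$ is treated exactly as its analogue \eqref{eq:ref:intro} in the dynamic proof: splitting according to $\{d(\sigma)\le1\}$ and $\{d(\sigma)>1\}$, using Cauchy--Schwarz, \eqref{est2-sN} with $\ell=1,2$, and $|\nabla\varphi|^2\le C\varphi$, then Young's inequality, ${\rm I}$ is bounded by $C$ plus a fraction strictly less than $1$ of the coercive term $\iint_{Q_t}\varphi\,(1+d^2(\sigma)\wedge\lambda^2)^{\frac1{2\alpha}-\frac12}\frac{C_Kd(\sigma)}{1+C_Kd(\sigma)}|\nabla\sigma_D|^2$ on the left-hand side, and thus absorbed — this is the only place where the $C^2$-regularity of $\partial K$ and the positivity of its second fundamental form enter. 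The term ${\rm II}$ is where the hypothesis $n\le4$ is used: by $BD(\Om)\hookrightarrow L^{n/(n-1)}(\Om)$ and Hölder, ${\rm II}\le C\int_0^t\|\varphi^{1/2}\sigma_D(s)\|_{L^n(\Om)}\|v(s)\|_{BD(\Om)}\,ds$, and since $n\le4$ one has $n\le 2^\ast=\tfrac{2n}{n-2}$ (the case $n\le2$ being trivial), so interpolation, the Sobolev inequality, $|\nabla(\varphi^{1/2})|\le C$ and the uniform bound $\|\sigma_D\|_{L^\infty(0,T;L^2)}\le C$ from \eqref{est1-sN} give $\|\varphi^{1/2}\sigma_D\|_{L^n(\Om)}\le C(1+\|\varphi^{1/2}\nabla\sigma_D\|_{L^2(\Om)}^{\theta})$ with $\theta=\tfrac{n-2}{2}\in[0,1]$; since $\theta\le1$ and $\int_0^T\|v(s)\|_{BD}^2\,ds\le C$, Young's inequality yields, for every $\varepsilon>0$, ${\rm II}\le\varepsilon\iint_{Q_t}\varphi\,|\nabla\sigma_D|^2+C_\varepsilon\le\varepsilon C\iint_{Q_t}\varphi\,\A\partial_k\sigma\cdot\partial_k\sigma+C_\varepsilon$. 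The essential point is that the $\nabla\sigma_D$ produced by the Sobolev inequality carries \emph{the same} cut-off $\varphi$, so no enlargement of the domain is introduced.

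Collecting everything, the basic identity gives $\int_\Om\varphi\,|\nabla\sigma(t)|^2\,dx\le C+C\int_0^t\big(\int_\Om\varphi\,|\nabla\sigma(s)|^2\,dx\big)\,ds$ with $C$ independent of $(\alpha,\lambda)$ (using the coercivity of $\A$ and $|\nabla\sigma|\le C(|\nabla\sigma_D|+|f|)$), and Gronwall's lemma together with $\varphi=1$ on $\omega$ yields \eqref{prop-eq}. The hard part is the estimation of $\iint_{Q_t}\varphi\,E(\partial_kv)\cdot\partial_k\sigma$: because $v$ is only $L^2(0,T;BD(\Om))$, every velocity term has to be integrated by parts until $v$ appears either bare or contracted into $Ev=\A\dot\sigma+D\gamma(\sigma)$ or $\Div v={\rm tr}(\A\dot\sigma)$ — the identity $\Delta v=2\Div(Ev)-\nabla(\Div v)$ being the key device for $\iint\varphi\,\nabla v:\nabla f$ — and the term $\iint\varphi^{1/2}|\sigma_D||v|$ that resists this reduction is exactly what forces the Sobolev embedding for $\sigma_D$ and hence the restriction $n\le4$.
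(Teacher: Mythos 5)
Your proposal is correct and follows essentially the same route as the paper: a first non-uniform $H^1_{loc}$ estimate, then the differentiated equation tested with a cut-off times $\partial_k\sigma$, repeated integration by parts so that $v$ only appears undifferentiated or through $Ev=\A\dot\sigma+D\gamma(\sigma)$ and $\Div v={\rm tr}(\A\dot\sigma)$, absorption of the term ${\rm I}$ into the coercive term coming from the positive second fundamental form, treatment of the term ${\rm II}$ (the paper's $\iint v_k\sigma_{ij}\partial_{ijk}\varphi^6$) via $BD(\Om)\hookrightarrow L^{n/(n-1)}$ and the Sobolev inequality under $n\le 2^*$, and Gronwall. Your use of the identity $\Delta v=2\Div(Ev)-\nabla(\Div v)$ and of Gagliardo--Nirenberg interpolation with exponent $\theta=(n-2)/2$ are only cosmetic variants of the paper's single unified integration-by-parts computation and of its direct estimate $\|\varphi^3\sigma\|_{L^n}\le C\|\nabla(\varphi^3\sigma)\|_{L^2}$ followed by Young and Gronwall.
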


\begin{proof}
We divide the proof into three steps and, again, omit the explicit dependence on $(\lambda,\alpha)$. As before we will denote by $Q_t$ the space time cylinder $\Om\times (0,t)$.
\par
\vskip10pt\noindent{\bf Step 1: First regularity estimates.} 
Let us prove that 
\begin{equation}
\label{eq:sigmaH1loc}
\sigma \in L^2(0,T;H^1_{\rm loc}(\Omega;\Mn)).
\end{equation}
Throughout the proof $\omega$ denotes an open set compactly contained in $\Omega$. Let $\omega'$ be a further open set such that
$ \omega \subset\subset \omega' \subset \subset \Omega$, and let $\varphi\in  C^\infty_c(\omega';[0,1])$ be a cut-off function such that $\varphi=1$ in $\omega$. For every $1 \leq k \leq n$, $x \in \omega'$, and $h<{\rm dist}(\omega',\partial \Omega)$, we recall the notation
$$\partial_k^h \zeta(x)=\frac{\zeta(x+he_k)-\zeta(x)}{h}$$
for the difference quotient of a general function $\zeta$ defined on $\Omega$ with values in a vector space.
\par
We have for a.e. $t \in (0,T)$
$$
\A \partial_k^h \dot \sigma(t) + \partial_k^h D\gamma(\sigma(t))=\partial_k^h Ev(t) \quad \text{ a.e. in }\omega'.
$$
Multiplying the previous equation by $\varphi^6 \partial_k^h \sigma(t)$ and integrating over $\Omega$ yield
$$\int_\Om \varphi^6 \A \partial_k^h \dot \sigma(t)\cdot \partial_k^h \sigma(t)\, dx  + \int_\Om \varphi^6 \partial_k^h D\gamma(\sigma(t))\cdot\partial_k^h \sigma(t) \, dx= \int_\Om \varphi^6 \partial_k^h Ev(t)\cdot \partial_k^h \sigma(t) \, dx.$$

Recalling the summation convention over repeated indexes, we may write using the second equation in \eqref{NH-pb}
\begin{eqnarray*}
\int_\Om\varphi^6 \partial_k^h Ev(t)\cdot \partial _k^h \sigma(t)\, dx & = & -\int_\Om \partial_k^h v_i(t) \partial_j(\varphi^6 \partial_k^h \sigma_{ij}(t))\,dx\\
& = & -\int_\Om \partial_k^h v_i(t) (-\varphi^6 \partial_k^h f_i(t)+\partial_k^h \sigma_{ij}(t)\partial_j \varphi^6)\,dx,
\end{eqnarray*}
so that integrating in time
\begin{multline}
\label{eq:ineq-en0}
\frac{1}{2}\int_\Om \varphi^6 \A\partial_k^h \sigma(t)\cdot \partial_k^h \sigma(t)\, dx+
\iint_{Q_t}\varphi^6 \partial_k^h D\gamma(\sigma)\cdot \partial_k^h \sigma\, dx \,ds\\
=\frac{1}{2}\int_\Om\varphi^6 \A\partial_k^h \sigma_0\cdot \partial_k^h \sigma_0\, dx-\iint_{Q_t} \partial_k^h v_i (-\varphi^6 \partial_k^h f_i+\partial_k^h \sigma_{ij}\partial_j \varphi^6)\,dx\,ds.
\end{multline}
By the convexity of $\gamma_{\alpha,\lambda}$ we obtain
\begin{multline*}
\frac{1}{2}\int_\Om \varphi^6 \A \partial_k^h \sigma(t)\cdot \partial_k^h \sigma(t) \, dx
\le \frac{1}{2}\int_\Om \varphi^6 \A \partial_k^h  \sigma_0\cdot \partial_k^h \sigma_0 \, dx\\
-\iint_{Q_t} \partial_k^h v_i (-\varphi^6 \partial_k^h f_i+\partial_k^h \sigma_{ij}\partial_j \varphi^6)\,dx\, ds,
\end{multline*}
which yields
$$
\|\varphi^3 \partial_k^h\sigma(t)\|^2_{L^2(\Om)}
\le C\left(1+ \int_0^t \left( \|\partial_k^h v(s)\|_{L^2(\Om)}(\|f(s)\|_{H^1(\omega')}+\|\varphi^3 \partial_k^h \sigma(s)\|_{L^2(\Om)} \right)\right)ds,
$$
so that \eqref{eq:sigmaH1loc} follows by the Gronwall Lemma. Note that the previous argument does not provide an estimate independent of $(\alpha,\lambda)$ because of the presence of the $L^2$ norm of $\nabla v(s)$.

\vskip10pt\noindent{\bf Step 2: Refined estimates.} 
We now refine the argument of Step~1 to deduce a uniform estimate with respect to the parameters $\alpha$ and $\lambda$.

Since $D\gamma(\sigma(t))\in H^1_{loc}(\Om;\Mn)$ by composition (because $D\gamma$ is Lipschitz continuous), passing to the limit as $h\to 0^+$ in \eqref{eq:ineq-en0} we get
\begin{multline}
\label{eq:ineq-en1}
\frac{1}{2}\int_\Om \varphi^6\A\partial_k \sigma(t)\cdot \partial_k \sigma(t)\, dx+
\iint_{Q_t} \varphi^6\partial_k D\gamma(\sigma)\cdot \partial_k \sigma\, dx \,ds\\
=\frac{1}{2}\int_\Om \varphi^6\A\partial_k \sigma_0\cdot \partial_k \sigma_0\, dx -\iint_{Q_t} \partial_k v_i ( -\varphi^6\partial_k f_i +\partial_j \varphi^6\partial_k\sigma_{ij} )\,dx\,ds.
\end{multline}
The last term on the right-hand side of \eqref{eq:ineq-en1} can be rewritten as follows:
\begin{eqnarray*}
\lefteqn{-\int_\Om \partial_k v_i ( -\varphi^6\partial_kf_i +\partial_j \varphi^6\partial_k\sigma_{ij} )\,dx} \\
&=& -\int_\Om (2(Ev)_{ik}-\partial_i v_k)( -\varphi^6\partial_kf_i +\partial_j \varphi^6\partial_k\sigma_{ij} )\,dx\\
&=& -2\int_\Om  (\A\dot\sigma+D\gamma(\sigma))_{ik}( -\varphi^6\partial_kf_i +\partial_j \varphi^6\partial_k\sigma_{ij} )\,dx\\
&&-\int_\Om v_k (- \varphi^6\partial_{ik}f_i-2\partial_kf_i \partial_i \varphi^6+\partial_k\sigma_{ij}\partial_{ij}\varphi^6)\,dx\\
&=& -2\int_\Om  (\A\dot\sigma+D\gamma(\sigma))_{ik}( -\varphi^6\partial_kf_i +\partial_j \varphi^6\partial_k\sigma_{ij} )\,dx\\
&&+\int_\Om (\A\dot\sigma)_{kk}(-\varphi^6\partial_i f_i-2f_i\partial_i \varphi^6+\sigma_{ij}\partial_{ij}\varphi^6 )\,dx\\
&&-\int_\Om v_k(\partial_i f_i \partial_k \varphi^6+2f_i \partial_{ik}\varphi^6-\sigma_{ij}\partial_{ijk}\varphi^6)\,dx,
\end{eqnarray*}
where we used again the first equation in \eqref{NH-pb} and the fact that $D\gamma(\sigma)$ is deviatoric.
Integrating in time, by \eqref{est1-sN} we have that
$$\Big| \iint_{Q_t} (\A\dot\sigma)_{kk}\sigma_{ij}\partial_{ij}\varphi^6\, dx \, ds\Big|
\leq C\|\dot\sigma\|_{L^2(0,T;L^2(\Om))}\|\sigma\|_{L^2(0,T;L^2(\Om))}\leq C$$
and
$$
\Big| \iint_{Q_t} (\A\dot\sigma)_{kk}(\varphi^6\partial_i f_i +2f_i\partial_i \varphi^6) \, ds\Big|
\leq C\|\dot\sigma\|_{L^2(0,T;L^2(\Om))}\|f\|_{L^2(0,T;H^1(\omega'))}\leq C.
$$
By \eqref{est1-vN} and the continuous embedding of $BD(\Omega)$ into $L^{\frac{n}{n-1}}(\Omega;\R^n)$
we obtain
\begin{multline*}
\Big| \iint_{Q_t} v_k(\partial_if_i \partial_k \varphi^6+2f_i\partial_{ik}\varphi^6) \, dx \, ds\Big|
\\
\leq C\|v\|_{L^2(0,T;L^{\frac{n}{n-1}}(\Om))}\left(\|\Div f\|_{L^2(0,T;L^n(\omega'))}+\|f\|_{L^2(0,T;L^n(\omega'))}\right) \leq C.
\end{multline*}
Moreover, by the second estimate in \eqref{est1-sN} and the Cauchy inequality we deduce
$$\Big| \iint_{Q_t} (\A\dot\sigma)_{ik}\partial_k\sigma_{ij}\partial_j\varphi^6\, dx  \, ds\Big| \leq C + \iint_{Q_t} \varphi^6|\partial_k\sigma|^2\, dx\, ds,$$
where we also used that $\partial_j\varphi^6=6\varphi^5 \partial_j\varphi$,  while
$$
\Big| \iint_{Q_t} \varphi^6(\A\dot\sigma)_{ik}\partial_kf_i\, dx  \, ds\Big| \leq C \|\dot\sigma\|_{L^2(0,T;L^2(\Om))}\|\nabla f\|_{L^2(0,T;L^2(\omega'))}\leq C.
$$

Concerning the term
$$
\iint_{Q_t} (D\gamma(\sigma))_{ik}(-\varphi^6\partial_k f_i+\partial_k\sigma_{ij}\partial_j\varphi^6)\, dx \, ds,
$$
by \eqref{Dgamma} we have
$$
\Big|\iint_{Q_t} (D\gamma(\sigma)_{ik}\partial_k\sigma_{ij}\partial_j\varphi^6\, dx \, ds\Big|\le 
C \iint_{Q_t} \varphi^5 (1+d^2(\sigma)\wedge\lambda^2)^{\frac1{2\alpha}-\frac12}d(\sigma) |\partial_k\sigma_{ij}|\, dx \, ds.
$$
Since $\frac{1}{n}\nabla ({\rm tr}\,\sigma)=-f- \Div\sigma_D$ in $\Omega$, we deduce that $|\nabla \sigma|\le |f|+|\nabla \sigma_D|$, which yields
\begin{eqnarray*}
\lefteqn{\Big|\iint_{Q_t} (D\gamma(\sigma))_{ik}\partial_k\sigma_{ij}\partial_j\varphi^6\, dx \, ds\Big|}\\
& \le & 
C \iint_{Q_t} \varphi^5 (1+d^2(\sigma)\wedge \lambda^2)^{\frac1{2\alpha}-\frac12}d(\sigma) (|f|+|\nabla \sigma_D|)\, dx \, ds\\
& \le  & C\|f\|_{L^\infty(0,T;L^\infty(\omega'))} \iint_{Q_t} \varphi^5 (1+d^2(\sigma)\wedge\lambda^2)^{\frac1{2\alpha}-\frac12}d(\sigma) \,dx\, ds
\\
&&{} +C \iint_{Q_t} \varphi^5 (1+d^2(\sigma)\wedge \lambda^2)^{\frac1{2\alpha}-\frac12}d(\sigma) |\nabla \sigma_D|\, dx \, ds\\
&\le & C\left( 1+ \iint_{Q_t} \varphi^5 (1+d^2(\sigma)\wedge \lambda^2)^{\frac1{2\alpha}-\frac12}d(\sigma) |\nabla \sigma_D|\, dx \, ds\right)
\end{eqnarray*}
where in the last inequality we used \eqref{est2-sN} with $\ell=1$. Again by \eqref{Dgamma} and \eqref{est2-sN} with $\ell=1$ we have
\begin{eqnarray*}
\lefteqn{\Big| \iint_{Q_t}\varphi^6 (D\gamma(\sigma))_{ik}\partial_kf_i \,dx\, ds\Big|}
\\
& \le & \iint_{Q_t} \varphi^6 (1+d^2(\sigma)\wedge \lambda^2)^{\frac{1}{2\alpha}-\frac{1}{2}}d(\sigma)|\nabla f|\,dx\, ds\\
& \le & \|\nabla f\|_{L^\infty(0,T;L^\infty(\omega'))}\iint_{Q_t} \varphi^6 (1+d^2(\sigma)\wedge \lambda^2)^{\frac{1}{2\alpha}-\frac{1}{2}}d(\sigma)\,dx\, ds \leq C.
\end{eqnarray*}
The assumptions on $K$ and \eqref{eq:formD2gamma2} guarantee that
$$\int_\Om \varphi^6\partial_k(D\gamma(\sigma)\cdot\partial_k\sigma\,dx\ge 
C_K \int_\Om \varphi^6 (1+d^2(\sigma)\wedge \lambda^2)^{\frac{1}{2\alpha}-\frac{1}{2}} \frac{d(\sigma)}{1+C_Kd(\sigma)}|\nabla \sigma_D|^2\,dx,$$
so that by \eqref{eq:ineq-en1} and all the previous estimates we finally obtain that
\begin{multline}
\label{ine-gr}
C\|\varphi^3 \nabla \sigma(t)\|^2_{L^2(\Om)}
+C_K \iint_{Q_t} \varphi^6 (1+d^2(\sigma)\wedge \lambda^2)^{\frac{1}{2\alpha}-\frac{1}{2}} \frac{d(\sigma)}{1+C_Kd(\sigma)}|\nabla \sigma_D|^2\,dx\, ds\\
\le C+\int_0^t \|\varphi^3 \nabla \sigma(s)\|^2_{L^2(\Om)} \, ds+C \iint_{Q_t} \varphi^5 (1+d^2(\sigma)\wedge \lambda^2)^{\frac1{2\alpha}-\frac12}d(\sigma) |\nabla \sigma_D|\, dx \, ds\\
+\iint_{Q_t} v_k\sigma_{ij}\partial_{ijk}\varphi^6\,dx\, ds.
\end{multline}
Applying the Cauchy-Schwarz inequality, we may write
\begin{eqnarray*}
\lefteqn{\iint_{Q_t} \varphi^5 (1+d^2(\sigma)\wedge \lambda^2)^{\frac1{2\alpha}-\frac12}d(\sigma) |\nabla \sigma_D|\, dx \, ds}
\\
& = & \int_0^t\int_{\{d(\sigma)\le 1\}} \varphi^5 (1+d^2(\sigma)\wedge \lambda^2)^{\frac1{2\alpha}-\frac12}d(\sigma) |\nabla \sigma_D|\, dx \, ds\\
&&+\int_0^t\int_{\{d(\sigma)> 1\}} \varphi^5 (1+d^2(\sigma)\wedge \lambda^2)^{\frac1{2\alpha}-\frac12}d(\sigma) |\nabla \sigma_D|\, dx \, ds\\
& \le & \left(  \int_0^t\int_{\{d(\sigma)\le 1\}} \varphi^6 (1+d^2(\sigma)\wedge \lambda^2)^{\frac1{2\alpha}-\frac12}d(\sigma) |\nabla\sigma_D|^2\, dx \, ds\right)^{\frac{1}{2}} \times \\
&&\times \left(\int_0^t\int_{\{d(\sigma)\le 1\}} (1+d^2(\sigma)\wedge \lambda^2)^{\frac1{2\alpha}-\frac12}d(\sigma)\, dx \, ds\right)^{\frac{1}{2}}\\
&&{}+\left(  \int_0^t\int_{\{d(\sigma)>1\}} \varphi^6 (1+d^2(\sigma)\wedge \lambda^2)^{\frac1{2\alpha}-\frac12}|\nabla \sigma_D|^2\, dx \, ds\right)^{\frac{1}{2}}
\times \\
&&\times \left( \int_0^t\int_{\{d(\sigma)>1\}} (1+d^2(\sigma)\wedge \lambda^2)^{\frac1{2\alpha}-\frac12}d(\sigma)^2\, dx \, ds\right)^{\frac{1}{2}}.
\end{eqnarray*}
Thus, by the Cauchy inequality and by \eqref{est2-sN} for $\ell=1,2$ we can rewrite \eqref{ine-gr} in the form
\begin{multline*}
C\|\varphi^3 \nabla\sigma(t)\|^2_{L^2(\Om)}
+C_K \iint_{Q_t} \varphi^6 (1+d^2(\sigma)\wedge \lambda^2)^{\frac{1}{2\alpha}-\frac{1}{2}} \frac{d(\sigma)}{1+C_Kd(\sigma)}|\nabla \sigma_D|^2\,dx\, ds\\
\le C+\int_0^t \|\varphi^3 \nabla \sigma(s)\|^2_{L^2(\Om)}\, ds+
\frac{C_K}{1+C_K}  \int_0^t\int_{\{d(\sigma)\leq 1\}} \varphi^6(1+d^2(\sigma)\wedge \lambda^2)^{\frac1{2\alpha}-\frac12}d(\sigma) |\nabla \sigma_D|^2 \, dx \, ds\\
+\frac{C_K}{1+C_K} \int_0^t\int_{\{d(\sigma)> 1\}} \varphi^6 (1+d^2(\sigma)\wedge \lambda^2)^{\frac1{2\alpha}-\frac12} |\nabla \sigma_D|^2\, dx \, ds
+\iint_{Q_t} v_k\sigma_{ij}\partial_{ijk}\varphi^6\,dx\, ds,
\end{multline*}
so that we arrive at the inequality
\begin{equation}
\label{eq:ine-gr2}
\|\varphi^3 \nabla\sigma(t)\|^2_{L^2(\Om)} \le C\left( 1+\int_0^t \|\varphi^3 \nabla\sigma(s)\|^2_{L^2(\Om)}\, ds
+\iint_{Q_t} v_k\sigma_{ij}\partial_{ijk}\varphi^6\,dx\, ds
\right).
\end{equation}

\vskip10pt\noindent{\bf Step 3: Conclusion.} We claim that
\begin{equation}
\label{claim:234}
\left|\iint_{Q_t} v_k\sigma_{ij}\partial^3_{ijk}\varphi^6\,dx\, ds\right|\le 
C\left(1+\int_0^t \|\varphi^3\nabla \sigma(s)\|_{L^{2}(\Om)}^2\,ds\right),
\end{equation}
so that inequality \eqref{eq:ine-gr2} entails
$$
\|\varphi^3 \nabla\sigma(t)\|^2\le C\left( 1+\int_0^t \|\varphi^3 \nabla \sigma(s)\|^2_{L^2(\Om)}\, ds\right)
$$
and the result follows by the Gronwall Lemma.
\par
It is in the proof of \eqref{claim:234} that the restriction on the dimension $n\le 4$ comes in. Indeed, if $n \leq 4$, we have
$$
n\le 2^*=\frac{2n}{n-2}
$$
and by the H\"older Inequality and  Sobolev embedding
\begin{eqnarray*}
\left|\int_\Om v_k\sigma_{ij}\partial_{ijk}\varphi^6\,dx\right| & \le & C\int_\Om |v| |\varphi^3\sigma|\,dx\\
& \le & C\|v\|_{L^{\frac{n}{n-1}}(\Om)}\|\varphi^3\sigma\|_{L^n(\Om)}\\
& \le &C \|v\|_{L^{\frac{n}{n-1}}}(\Om)\|\varphi^3\sigma\|_{L^{2^*}(\Om)}\\
& \le & C \|v\|_{L^{\frac{n}{n-1}}(\Om)}\|\nabla (\varphi^3\sigma)\|_{L^{2}(\Om)}.
\end{eqnarray*}
Taking into account \eqref{est1-vN} we deduce
\begin{eqnarray*}
\int_0^t\left|\int_\Om v_k\sigma_{ij}\partial_{ijk}\varphi^6\,dx\right|\,ds & \le & C\int_0^t \left(\|v(s)\|_{L^{\frac{n}{n-1}}(\Om)}^2+\|\nabla(\varphi^3\sigma(s))\|_{L^{2}(\Om)}^2\right) ds\\
& \le & C\int_0^t \left(\|v(s)\|_{BD(\Om)}^2+\|\varphi^3\nabla\sigma(s)\|_{L^{2}(\Om)}^2+\|\sigma(s)\|_{L^2(\Om)}^2\right)ds\\
& \le & C\left(1+\int_0^t \|\varphi^3\nabla\sigma(s)\|_{L^{2}(\Om}^2\,ds\right).
\end{eqnarray*}
The proof is now complete.
\end{proof}

\subsection{Quasi-static evolutions in perfect plasticity}
We conclude this section by an existence result for quasi-static evolutions in perfect plasticity, which follows by passing to the limit in the quasi-static Norton-Hoff problem as $\alpha \to 0^+$ and $\lambda\to +\infty$.

\begin{theorem}[\bf Quasi-static evolutions in perfect plasticity]
\label{thm:hreg3}
Under the same assumptions of Theorem~\ref{prop:reg-sN}, given $(u_0,e_0,p_0)\in {\mathcal A}^{\rm qst}_{w(0)}$ with $e_0:=\A \sigma_0$, there exists a triplet
$$
(u,e,p)\in H^1(0,T; BD(\Omega){\times} L^2(\Omega;\Mn){\times} {\mathcal M}(\Omega \cup \Gamma_D;\MD))
$$
satisfying the following condition:

\medskip

\noindent {\bf Kinematic compatibility:} for all $t \in [0,T]$,
\begin{equation}\label{eq:kc2}
Eu(t)=e(t)+p(t) \text{ in }\Om, \quad p(t)=(w(t)-u(t))\odot \nu \HH^{n-1} \text{ on }\Gamma_D;
\end{equation}

\medskip

\noindent {\bf Stress constraint:} for all $t\in [0,T]$,
\begin{equation}
\label{eq:sigmaK2}
\sigma(t):=\mathbf C e(t) \in \K \quad \text{$\LL^n$-a.e. in }\Om;
\end{equation}

\medskip

\noindent {\bf Equilibrium equation:} for all $t \in (0,T)$,
\begin{equation}
\label{eq:motion2}
-\Div \sigma(t)=f(t) \quad \text{ $\LL^{n}$-a.e. in }\Om;
\end{equation}

\medskip

\noindent {\bf Neumann boundary condition:} for a.e. $t\in (0,T)$,
\begin{equation}
\label{eq:Neumann2}
\sigma(t)\nu=g(t)\quad  \HH^{n-1}\text{-a.e. on }\Gamma_N;
\end{equation}

\medskip

\noindent {\bf Flow rule:} for a.e. $t\in (0,T)$,
\begin{equation}\label{eq:fr2}
H(\dot p(t))=[\sigma(t)\cdot \dot p(t)] \quad \text{ in }\Om \cup \Gamma_D;
\end{equation}

\medskip

\noindent {\bf Initial condition:} 
\begin{equation}\label{eq:ic2}
(u(0),e(0),p(0))=(u_0, e_0, p_0).
\end{equation}

Moreover, the stress component satisfies the following estimate: 
for every open set $\omega\subset\subset \Omega$ there exists a constant $C_\omega>0$
such that 
\begin{equation}\label{sig:hreg}
\sup_{t\in[0,T]}\|\nabla \sigma(t)\|_{L^2(\omega)}\leq C_\omega.
\end{equation}
\end{theorem}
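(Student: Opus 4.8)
The plan is to reproduce, in the quasi-static setting, the scheme used in the proof of Theorem~\ref{thm:hreg2}, constructing the evolution as a limit of the quasi-static Norton-Hoff approximations. Fix sequences $\alpha_j\to 0^+$ and $\lambda_j\to +\infty$, and let $(\sigma_j,v_j)$ be the solution of \eqref{NH-pb} given by Theorem~\ref{thm:psialpha} with $\alpha=\alpha_j$, $\lambda=\lambda_j$. By \eqref{est1-sN} the stresses $\sigma_j$ are bounded in $H^1(0,T;L^2(\Om;\Mn))$ and, via $-\Div\sigma_j=f$ and $(H'_5)$, in $L^2(0,T;H_{\Div}(\Om;\Mn))$; by \eqref{est1-vN} the velocities $v_j$ are bounded in $L^2(0,T;BD(\Om))$; and by Theorem~\ref{prop:reg-sN}, $\nabla\sigma_j$ is bounded in $L^\infty(0,T;L^2(\omega;\Mn))$ for every $\omega\subset\subset\Om$, uniformly in $j$. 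Since the second bound in \eqref{est1-sN} makes $\sigma_j$ uniformly $\frac12$-Hölder continuous in time with values in $L^2$, an Ascoli--Arzel\`a argument (exploiting also the compact embedding $H^1(\omega)\hookrightarrow\hookrightarrow L^2(\omega)$) yields, up to a subsequence, $\sigma_j(t)\wto\sigma(t)$ weakly in $H^1(\omega;\Mn)$ for every $t\in[0,T]$ and every $\omega\subset\subset\Om$, $\sigma_j\wto\sigma$ weakly in $H^1(0,T;L^2)$, and $v_j\wto v$ weakly in $L^2(0,T;BD(\Om))$. In particular \eqref{sig:hreg} is immediate from Theorem~\ref{prop:reg-sN} and the weak lower semicontinuity of the $L^2(\omega)$-norm.

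Next I would set, as in Step~2 of Theorem~\ref{thm:hreg2}, $u(t):=u_0+\int_0^t v(s)\,ds$ (a Bochner integral in $BD(\Om)$), so that $u\in H^1(0,T;BD(\Om))$ with $\dot u=v$; put $e:=\A\sigma\in H^1(0,T;L^2(\Om;\Mn))$ and define $p(t):=Eu(t)-e(t)$ in $\Om$ and $p(t):=(w(t)-u(t))\odot\nu\,\HH^{n-1}$ on $\Gamma_D$, so that $p\in H^1(0,T;\mathcal M(\Om\cup\Gamma_D;\Mn))$ and the kinematic compatibility \eqref{eq:kc2} holds by construction. Since $D\gamma_{\alpha_j,\lambda_j}(\sigma_j)$ is deviatoric, the first line of \eqref{NH-pb} gives $\Div v_j={\rm tr}(\A\dot\sigma_j)$, which in the limit (together with $p_0\in\MD$, forced by $(u_0,e_0,p_0)\in\mathcal A^{\rm qst}_{w(0)}$) shows that $p(t)\in\MD$ for every $t$ and that $D\gamma_{\alpha_j,\lambda_j}(\sigma_j)\wto\dot p$ weakly in $L^2(0,T;\mathcal M(\Om\cup\Gamma_D;\MD))$, the boundary contribution of $\dot p$ arising from the relaxation of the Dirichlet condition $v_j=\dot w$ on $\Gamma_D$. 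The initial condition \eqref{eq:ic2} follows from $\sigma_j(0)=\sigma_0$ and the definitions; the equilibrium equation \eqref{eq:motion2} and the Neumann condition \eqref{eq:Neumann2} follow by passing to the limit in \eqref{NH-pb}; and the stress constraint \eqref{eq:sigmaK2} is obtained exactly as in Step~4 of the proof of Theorem~\ref{thm:hreg2}, using now the time-integrated bounds \eqref{est2-sN}--\eqref{est2-uni}, Fatou's lemma, and the continuity $t\mapsto\sigma(t)$ to upgrade the a.e.-in-time statement to all $t$.

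The substantial point is the flow rule \eqref{eq:fr2}. Since $\sigma(t)\in\K$ $\LL^n$-a.e.\ in $\Om$, \cite[Proposition~2.4]{DMDSM} gives $H(\dot p(t))\ge[\sigma(t)\cdot\dot p(t)]$ as measures for a.e.\ $t$, so it remains to establish the reverse inequality after integration over $\Om\cup\Gamma_D$ and in time. Starting from the quasi-static energy balance in the Remark following Theorem~\ref{thm:psialpha}, using $\gamma^*_{\alpha_j,\lambda_j}(\xi)\ge H(\xi)-\tfrac{\alpha_j}{\alpha_j+1}$ for $\xi\in\MD$, I would prove, by adapting \cite[Theorem~3.3]{DMDSM} to the evolutionary setting, the lower semicontinuity estimate
\[
\int_0^t\big(\HH(\dot p(s))-\langle\rho_D(s),\dot p(s)\rangle\big)\,ds\le\liminf_{j\to+\infty}\int_0^t\!\!\int_\Om\big(H(D\gamma_{\alpha_j,\lambda_j}(\sigma_j))-\rho_D\cdot D\gamma_{\alpha_j,\lambda_j}(\sigma_j)\big)\,dx\,ds,
\]
inserting a cut-off $\varphi$ vanishing near $\Gamma_N$, identifying $\lim_j\iint\rho_D\cdot D\gamma_{\alpha_j,\lambda_j}(\sigma_j)\,\varphi$ with $\int_0^t\langle[\rho_D\cdot\dot p],\varphi\rangle\,ds$ through an integration by parts, representing $\HH$ as a countable supremum of integrals against elements of $C_0(\Om\cup\Gamma_D;\MD)$, invoking Lemma~\ref{lemma:AFP} together with the positivity of $\HH$, and finally letting $\varphi\uparrow{\bf 1}_{\Om\cup\Gamma_D}$. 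Passing to the limit in the energy balance and then applying the integration by parts formula \eqref{intbyp} first to $\rho$ and then to $\sigma$ (with $-\Div\sigma=f$) to cancel the safe-load terms yields $\int_0^t(\HH(\dot p)-\langle\sigma_D,\dot p\rangle)\,ds\le 0$, hence the measure identity $H(\dot p(t))=[\sigma(t)\cdot\dot p(t)]$ in $\Om\cup\Gamma_D$ for a.e.\ $t$, which is \eqref{eq:fr2}. I expect this lower semicontinuity claim --- together with the careful bookkeeping of the $\Gamma_D$-boundary term in the passage to the limit $D\gamma_{\alpha_j,\lambda_j}(\sigma_j)\wto\dot p$ --- to be the main obstacle; the rest is a routine transcription of the dynamic proof, with the simplification that, lacking any $H^1_{loc}$ regularity for $v$, the flow rule is obtained only in its measure-theoretic form \eqref{eq:fr2} rather than pointwise. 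Note that, unlike in Theorem~\ref{thm:hreg2}, uniqueness is not asserted, consistently with the general non-uniqueness of quasi-static evolutions.
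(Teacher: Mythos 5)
Your proposal is correct and follows essentially the same route as the paper: passing to the limit in the quasi-static Norton--Hoff approximations of Theorem~\ref{thm:psialpha} using the uniform bounds \eqref{est1-sN}, \eqref{est1-vN} and \eqref{prop-eq}, defining $u$, $e$, $p$ exactly as in Step~2 of Theorem~\ref{thm:hreg2}, and obtaining the stress constraint and the measure-theoretic flow rule by adapting Steps~4 and~5 of the dynamic proof (with the time-integrated bounds \eqref{est2-sN}--\eqref{est2-uni} and the quasi-static energy balance). In fact you spell out these adaptations in more detail than the paper itself does.
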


\begin{proof}
Let $\alpha_j\to 0^+$ and $\lambda_j \to +\infty$, and let $(\sigma_j,v_j)$ be the evolution given by Theorem~\ref{thm:psialpha} with the choice $\alpha:=\alpha_j$ and $\lambda=\lambda_j$. By applying the Ascoli-Arzel\`a Theorem we deduce from \eqref{est1-sN} that there exists $\sigma\in H^1(0,T;L^2(\Omega;\Mn))$ such that, up to subsequences, for every $t\in[0,T]$ 
$$\sigma_j(t)\wto\sigma(t) \quad \text{weakly in } L^2(\Omega;\Mn)$$
and
$$\sigma_j\wto\sigma \quad \text{weakly in } H^1(0,T;L^2(\Omega;\Mn)).$$
Moreover, by \eqref{prop-eq} we infer that $\sigma \in L^\infty(0,T;H^1_{loc}(\Om;\Mn))$ and \eqref{sig:hreg} holds. It thus follows that $\sigma(0)=\sigma_0$ and $-\Div\sigma(t)=\LL(t)$ in $[H^1_{\Gamma_D}(\Om;\R^n)]'$ for every $t\in[0,T]$. This last condition corresponds to the weak formulation of the equilibrium equation \eqref{eq:motion2} and the Neumann boundary condition \eqref{eq:Neumann2}.

Concerning the velocities, from \eqref{est1-vN} we get that up to a further subsequence
$$
v_j\wto v\qquad\text{weakly* in }L^2(0,T;BD(\Om)).
$$
We now set
$$
u(t):=u_0+\int_0^t v(s)\, ds \quad \text{ for every } t\in[0,T].
$$
We define $e(t):=\A\sigma(t)$ and $p(t):=Eu(t)-e(t)$ in $\Omega$, $p(t):=(w(t)-u(t))\odot \nu\HH^{n-1}$ on $\Gamma_D$.
It is thus clear that $(u(t),e(t),p(t))\in \Acal^{\rm qst}_{w(t)}$ from which kinematic compatibility \eqref{eq:kc2} and the initial condition \eqref{eq:ic2} follow.

The stress constraint \eqref{eq:sigmaK2} and the flow rule \eqref{eq:fr2} can be obtained by adapting Steps~4 and~5 in the proof of Theorem~\ref{thm:hreg2} for the dynamical case.
\end{proof}

\begin{remark}
The measure theoretic flow rule \eqref{eq:fr2} can be expressed in a pointwise way. Indeed, since 
$\sigma(t) \in H^1_{loc}(\Om;\R^n)$ for a.e. $t \in (0,T)$, there exists the (Borel measurable) quasi-continuous representative of $\sigma(t)$ for the $H^1$-capacity, which we denote by $\hat \sigma(t)$. Following \cite{DMDSM,FGM,BMo}, \eqref{eq:fr2} can be equivalently written as follows: for a.e. $t \in (0,T)$
$$H\left( \frac{\dot p(t)}{|\dot p(t)|} \right)= \hat \sigma(t) \cdot  \frac{\dot p(t)}{|\dot p(t)|} \quad |\dot p(t)|\text{-a.e. in }\Om.$$
\end{remark}

\section{Examples}\label{sec:ex}

In this section, we show that, in the dynamic case, although the solutions are (Sobolev) regular in the interior of the spatial domain, some jump might appear at the boundary, so that the Dirichlet boundary condition might fail to be satisfied.

We will consider two examples in the one-dimensional case where $\Omega=(0,L)$ with $L>0$. 
We will assume that $\C$ (and thus $\A$) is the identity tensor and $\K=[-1,1]\subset\R$, so that $H(\xi)=|\xi|$ for every $\xi\in\R$. As for notation, if $f:(0,T) \times (0,L) \to \R$ stands for a generic function, $f'$ denotes the partial derivative with respect to the space variable, while $\dot f$ denotes the partial derivative with respect to the time variable.

\subsection{The stationary case}

Let $w$ be a Dirichlet boundary data such that $w(0)=0$ and $w(L)=a$ (with $a \in \R$). Let $(u,\sigma,p) \in [W^{1,1}(0,L) \cap H^1_{\rm loc}(0,L)] \times [L^\infty(0,L)  \cap  H^1_{\rm loc}(0,L)] \times [\mathcal M([0,L]) \cap L^2_{\rm loc}(0,L)]$ be the unique solution of the stationary dynamic problem, i.e.,
$$
\begin{cases}
u'=\sigma+p & \text{ in } (0,L),\\
p(0)=u(0)-w(0), &\\
p(L)=w(L)-u(L),& \\
u-\sigma'=0 & \text{ in }(0,L),\\
|\sigma|\leq 1 & \text{ in } (0,L),\\
\sigma p=|p| & \text{ in } (0,L),\\
\sigma(0)(w(0)-u(0))=|w(0)-u(0)|,&\\
\sigma(L)(w(L)-u(L))=|w(L)-u(L)|.&
\end{cases}
$$
The first three conditions correspond to kinematic admissibility. The fourth equation is a stationary variant of the equation of motion, while the fifth condition is the stress constraint. The last three equations provide a stationary variant of the flow rule.

Note that the fourth equation actually shows that $\sigma'=u \in L^2(0,L)$, hence $\sigma \in H^1(0,L)$. In particular, $\sigma \in C^0([0,L])$.\medskip

{\bf Case 1.} If $|a| \leq \tanh(L)$, the (unique) solution turns out to be purely elastic, i.e., for all $x \in [0,L]$,
$$
\begin{cases}
\displaystyle u(x)=\frac{a\sinh(x)}{\sinh(L)},\\
\displaystyle\sigma(x)=u'(x)=\frac{a\cosh(x)}{\sinh(L)},\\
p\equiv 0.
\end{cases}
$$
Observe that the Dirichlet boundary condition $u=w$ on $\{0,L\}$ is satisfied in the usual sense, that is, no jumps appear at the boundary.\medskip

{\bf Case 2.} If $|a| > \tanh(L)$, we look for a solution of the form
$$
\begin{cases}
u(x)=2\alpha \sinh(x),\\
\sigma(x)=2\alpha\cosh(x),\\
p(x)=0
\end{cases}
$$
for all $x \in (0,L)$, for some some constant $\alpha \in \R$ to be specified later. The boundary condition $u(0)=0=w(0)$ is clearly satisfied. Note that the stress constraint $|\sigma|\leq 1$ imposes
$$|\alpha| \leq \frac{1}{2\cosh(L)}.$$
If $u(L)=w(L)$, then $\alpha=\frac{a}{2\sinh(L)}$, which enters into contradiction with $|a| > \tanh(L)$.  As a consequence, $u(L)\neq w(L)$ and the flow rule on the boundary implies that $|\sigma(L)|=1$, i.e.,
$$\alpha=\pm \frac{1}{2\cosh(L)},$$
and also 
$$
0\leq \sigma(L)(a-u(L))=\sigma(L)\big( a - 2\alpha \sinh(L)\Big) = \pm \big(a \mp \tanh(L)\big).
$$
Since we assume that $|a| >\tanh(L)$, we obtain that
$$\alpha= \frac{1}{2\cosh(L)} \quad \text{ if }a >\tanh(L),$$
while
$$\alpha=-\frac{1}{2\cosh(L)} \quad \text{ if }a <- \tanh(L).$$
In both these cases we thus get the existence and uniqueness of a solution that does not satisfy the Dirichlet boundary condition at $L$. The displacement $u$ experiences a jump at $x=L$ and the plastic strain is concentrated at that point, i.e., $p=(a-2\alpha\sinh(L)) \delta_L$.

\subsection{The evolutionary case}\label{subsec52}

The length $L>0$ of the space interval being fixed, let $a$ and $T$ be such that
$$0<a<\tanh(L)< a e^T.$$
Let $w$ be a Dirichlet boundary data such that $w(t,0)=0$ and $w(t,L)=a e^t$ for all $t \in [0,T]$. Let $(u,\sigma,p)$ be the unique solution of the evolutionary dynamic problem, i.e.,
\begin{equation}\label{eq:1Dpb}
\begin{cases}
u'=\sigma+p & \text{ in } (0,T) \times (0,L),\\
p(t,0)=u(t,0)-w(t,0) &\text{ for all }t \in [0,T],\\
p(t,L)=w(t,L)-u(t,L)& \text{ for all }t \in [0,T],\\
\ddot u-\sigma'=0 & \text{ in }(0,T) \times(0,L),\\
|\sigma|\leq 1 & \text{ in } (0,T) \times(0,L),\\
\sigma(t) \dot p(t)=|\dot p(t)| & \text{ in } (0,L),\quad \text{ for a.e. }t \in [0,T],\\
\sigma(t,0)(\dot w(t,0)- \dot u(t,0))=|\dot w(t,0)-\dot u(t,0)|, &\text{ for all }t \in [0,T],\\
\sigma(t,L)(\dot w(t,L)-\dot u(t,L))=|\dot w(t,L)-\dot u(t,L)| &\text{ for all }t \in [0,T].
\end{cases}
\end{equation}

Let $t_0\in (0,T)$ be such that
$$a e^{t_0}=\tanh(L).$$
On the time interval $[0,t_0]$ the solution coincides with the purely elastic solution, i.e.,
\begin{equation}\label{eq:0-t_0}
\begin{cases}
\displaystyle u(t,x)=\frac{ae^t\sinh(x)}{\sinh(L)},\\
\displaystyle\sigma(t,x)=u'(t,x)=\frac{ae^t \cosh(x)}{\sinh(L)},\\
p\equiv 0.
\end{cases}
\end{equation}

In order to compute the solution for the subsequent times $t>t_0$, we introduce the following sets:
$$A=\left\{(t,x) \in (t_0,T] \times [0,L] :\; x < \cosh^{-1}\left(\frac{\sinh(L)}{ae^t}\right)\right\},$$
$$B=\left\{(t,x) \in (t_0,T] \times [0,L] : \; x > \cosh^{-1}\left(\frac{\sinh(L)}{ae^t}\right)\right\},$$
and the curve
$$\Gamma=\left\{(t,x) \in (t_0,T] \times [0,L] :\; x =\cosh^{-1}\left(\frac{\sinh(L)}{ae^t}\right)=:\gamma(t) \right\},$$
which is the interface between $A$ and $B$ (see Fig.~\ref{figure}).

For all $(t,x) \in A$ we set
\begin{equation}\label{eq:t_0-A}
\begin{cases}
\displaystyle u(t,x)=\frac{ae^t\sinh(x)}{\sinh(L)},\\
\displaystyle\sigma(t,x)=u'(t,x)=\frac{ae^t \cosh(x)}{\sinh(L)}, \\
p\equiv 0.
\end{cases}
\end{equation}
Note that $|\sigma|<1$ in $A$ and that $A$ only contains the left Dirichlet boundary $(t_0,T] \times~\{0\}$.

We now compute the solution in $B$. We first set $\sigma\equiv 1$ in $B$. Note that this definition ensures that $\sigma$ is continuous across the curve $\Gamma$ and across the set $\{t=t_0\}$. We next use the equation of motion in $B$ to get that
$\ddot u=\sigma'=0$ in $B$, hence
$$u(t,x)=f(x) t + g(x) \quad \text{ for all }(t,x) \in B$$
for some functions $f$, $g$. 

On the other hand, since for all $t \in (t_0,T)$, the functions $x \mapsto u(t,x)$ and $x \mapsto \dot u(t,x)$ belong to $H^1_{\rm loc}(0,L)$, they must be continuous at the point $x=\gamma(t)$. Writing these conditions leads to
$$\tanh(x)=f(x) \ln \left( \frac{\sinh(L)}{a\cosh(x)}\right)+g(x) \quad \text{ and } \quad\tanh(x)=f(x),$$
or still
$$f(x)=\tanh(x), \quad g(x)=\tanh(x)\left[1-\ln\left(\frac{\sinh(L)}{a\cosh(x)}\right)\right].$$
For all $(t,x) \in B$ we thus set
\begin{equation}\label{eq:t_0-B}
\begin{cases}
\displaystyle u(t,x)=\tanh(x)\left[t+1-\ln\left(\frac{\sinh(L)}{a\cosh(x)}\right)\right],\\
\displaystyle\sigma(t,x)=1,
\end{cases}
\end{equation}
and
\begin{equation}\label{MG:exdp}
p(t,\cdot)=(u'(t,\cdot)-1)\LL^1+( w(t,L)-u(t,L))\delta_L
\end{equation}
in $B\cup\Gamma$. The continuity of $\sigma$, $u$, and $\dot u$ across $\Gamma$ and $\{t=t_0\}$ ensures that the equation of motion $\ddot u-\sigma'=0$ is satisfied in $(0,T) \times (0,L)$. By construction, the additive decomposition, as well as the stress constraint, are also satisfied. It remains to check the validity of the flow rule.

Since $p$ is continuous on $(0,T) \times (0,L)$, from its explicit expression we get that
$$
\dot p(t,x)=\left(1-\tanh^2(x)\right){\bf 1}_{{\rm int}B}(t,x) \geq 0
$$
on $(0,T) \times (0,L)$, hence $\sigma(t,x) \dot p(t,x)=|\dot p(t,x)|$ for all $(t,x) \in (0,T) \times (0,L)$. The flow rule is thus satisfied in the interior of the space-time region. 

For what concerns the flow rule on the boundary, we first notice that, according to \eqref{eq:0-t_0} and \eqref{eq:t_0-A}, we have $u(t,0)=0$, which also proves the validity of the flow rule at $x=0$. 

At the other boundary point $x=L$, we have $u(t,L)=a=w(t,L)$ for all $t \in [0,t_0]$ by \eqref{eq:0-t_0}. Moreover, for all $t\in (t_0,T]$ we have
$$w(t,L)-u(t,L)=ae^t -\tanh(L)\left[t+1-\ln\left(\frac{\tanh(L)}{a} \right)\right],$$
which is a strictly increasing function of $t$ over $(t_0,T]$. As a consequence, $\dot w(t,L)-\dot u(t,L)>0$ for all $t >t_0$, and using that $\sigma(t,L)=1$ by \eqref{eq:t_0-B} and the expression \eqref{MG:exdp}, we get the validity of the flow rule at $x=L$.

\begin{figure}
			\begin{tikzpicture}
%
\draw[->]  (-0.5,0) -- (3.5,0);
\draw[->]  (0,-0.5) -- (0,4.5);
\node[below]  at (3.6,0) {\small$x$};
\node[left] at (0,4.6) {\small$t$};
\def\ya{1}
  \def\yb{4}
  \def\xa{{3-ln(cosh(3))}}
  \filldraw[fill=red!30]  (3,0) -- (3,4) -- (0,4) -- (0,0) -- (3,0);

\filldraw[domain=\ya:\yb, smooth, variable=\y, fill=blue!30] plot({3-ln(cosh(\y-1))}, \y);
\filldraw[blue!30] (0,0) -- (3,0) -- (3,1) -- (\xa,4) -- (0,4) -- (0,0);
\draw[dashed]  (0,\ya) -- (3,\ya);
\node [left] at (0,\ya) {\small$t_0$};
\node [left] at (0,4) {\small$T$};
\draw (0,0) -- (3,0) -- (3,1);
\draw  (\xa,4) -- (0,4) -- (0,0);
\node [below left] at (0,0) {\small$0$};
\node [below] at (3,0) {\small$L$};
\node [left] at (2.5, 3.5) {\small$B$};
\node [left] at (1.5, 3) {\small$\Gamma$};
			\end{tikzpicture}
			\caption{The elastic region (in light blue) and the plastic region (in light red) in the example of Section~\ref{subsec52}.}\label{figure}
		\end{figure}
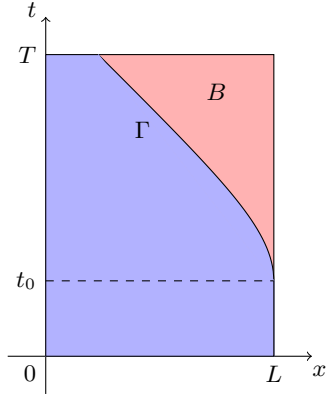

In conclusion, \eqref{eq:0-t_0}--\eqref{MG:exdp} is the unique solution to the one-dimensional evolutionary problem \eqref{eq:1Dpb}. The velocity $\dot u$  experiences a boundary jump at $x=L$ provided the final time $T$ is large enough (actually for all times $t \in (t_0,T]$). This shows the possibility to get spatial singularities at the boundary, although solutions are smooth in the interior of the space-time domain.

\section*{Appendix}

In this appendix we collect some results that were of use in the previous sections and show that the Hosford criterion, mentioned in the introduction, is covered by our analysis.

The lemma below, which was applied in the proof of Theorem~\ref{thm:hreg2}, follows by an easy adaptation of \cite[Lemma 2.35]{AFP}.

\begin{lemma}\label{lemma:AFP}
Let $\lambda$ be a positive Radon measure in an open set $U \subset\R^N$ and, for every $k \in\N$, let $f_k:U\to\R$ be a Borel function such that
$$
\int_U |f_k|\, d\lambda<+\infty.
$$
Then
$$
\int_U \left(\sup_k f_k\right)_+\,d\lambda =\sup\bigg\{ \sum_{k\in I}\int_{A_k} f_k\, d\lambda\bigg\}.
$$
where the supremum at the right-hand side is taken over all finite sets $I\subset\N$ and all families $\{A_k: k\in I\}$ of pairwise disjoint open sets with compact closure in $U$.
\end{lemma}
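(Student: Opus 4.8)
First I would check that the right-hand side is $\le \int_U (\sup_k f_k)_+\, d\lambda$. Indeed, if $\{A_k : k \in I\}$ is any finite family of pairwise disjoint open sets with compact closure in $U$, then, since $f_k \le \sup_j f_j \le (\sup_j f_j)_+$ pointwise and the $A_k$ are disjoint,
$$\sum_{k\in I}\int_{A_k} f_k\, d\lambda \le \sum_{k\in I}\int_{A_k}\Big(\sup_j f_j\Big)_+ d\lambda = \int_{\bigcup_{k\in I} A_k}\Big(\sup_j f_j\Big)_+ d\lambda \le \int_U \Big(\sup_j f_j\Big)_+ d\lambda ,$$
and taking the supremum over all such families gives one inequality.

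\textbf{Reduction to finitely many functions.} Set $g := (\sup_k f_k)_+$ and, for $n \in \N$, $g_n := \max_{k\le n}(f_k)_+ = (\max_{k\le n} f_k)_+$, so that $0 \le g_n \uparrow g$; by the monotone convergence theorem it is enough to prove that the right-hand side is $\ge \int_U g_n\, d\lambda$ for every fixed $n$. Fix $n$ and introduce the pairwise disjoint Borel sets
$$E_k := \{x\in U : (f_k)_+(x) = g_n(x) \text{ and } (f_j)_+(x) < g_n(x) \text{ for all } j < k\}, \qquad k = 1,\dots,n,$$
whose union is $U$. On $E_k \cap \{f_k > 0\}$ one has $f_k = (f_k)_+ = g_n$, while on $E_k \setminus \{f_k > 0\}$ the integrand $(f_k)_+$ vanishes, hence $\int_U g_n\, d\lambda = \sum_{k=1}^n \int_{E_k \cap \{f_k > 0\}} f_k\, d\lambda$.

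\textbf{Regularization by disjoint open sets.} Fix $\e > 0$. For each $k$ the hypothesis guarantees that $\nu_k := |f_k|\,\lambda$ is a finite Borel measure, hence a Radon measure, so it is both inner and outer regular. Choose a compact set $K_k \subset E_k \cap \{f_k > 0\}$ with $\nu_k\big((E_k \cap \{f_k > 0\}) \setminus K_k\big) < \e/n$; since $f_k = |f_k|$ on this set, this yields $\int_{K_k} f_k\, d\lambda \ge \int_{E_k \cap \{f_k > 0\}} f_k\, d\lambda - \e/n$. The compact sets $K_1,\dots,K_n$ are pairwise disjoint, so they admit pairwise disjoint open neighbourhoods; intersecting these with a large ball and with thin enough tubular neighbourhoods (using that ${\rm dist}(K_k,\partial U)>0$ when $U \ne \R^N$) we obtain pairwise disjoint open sets $U_k \supset K_k$ with $\overline{U_k} \subset U$ compact. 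By outer regularity pick open $V_k \supset K_k$ with $\nu_k(V_k \setminus K_k) < \e/n$, and set $A_k := U_k \cap V_k$. Then $\{A_k : k = 1,\dots,n\}$ is an admissible family and
$$\int_{A_k} f_k\, d\lambda \ge \int_{K_k} f_k\, d\lambda - \int_{A_k \setminus K_k} |f_k|\, d\lambda \ge \int_{E_k \cap \{f_k > 0\}} f_k\, d\lambda - \frac{2\e}{n} .$$
Summing over $k$ gives $\sum_{k=1}^n \int_{A_k} f_k\, d\lambda \ge \int_U g_n\, d\lambda - 2\e$; as $\e$ is arbitrary, the right-hand side is $\ge \int_U g_n\, d\lambda$, and letting $n\to\infty$ completes the proof.

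\textbf{Main obstacle.} The only point going beyond the nonnegative case of \cite[Lemma 2.35]{AFP} is controlling the sign of the $f_k$: one must approximate $E_k \cap \{f_k > 0\}$ from inside by compacts and only then fatten them to pairwise disjoint open sets, all the while keeping the spurious contribution $\int_{A_k \setminus K_k} |f_k|\, d\lambda$ small. This is exactly where the integrability assumption $\int_U |f_k|\, d\lambda < +\infty$ is used, through the fact that $|f_k|\,\lambda$ is then a finite, hence Radon, measure enjoying inner and outer regularity.
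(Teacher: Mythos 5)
Your proof is correct, and it is essentially the argument the paper has in mind: the paper simply states that the lemma ``follows by an easy adaptation of \cite[Lemma 2.35]{AFP}'', and your write-up is precisely that adaptation — reduce to a finite maximum, partition $U$ into the Borel sets where the maximum is first attained, restrict to $\{f_k>0\}$, and use inner/outer regularity of the finite measures $|f_k|\lambda$ to pass to pairwise disjoint open sets while keeping the spurious contribution $\int_{A_k\setminus K_k}|f_k|\,d\lambda$ small, which is exactly where the hypothesis $\int_U|f_k|\,d\lambda<+\infty$ enters. No gaps to report.
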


We now state and prove a result that was used in the proof of Theorem~\ref{thm:psialpha}.

\begin{lemma}\label{lm:CL}
Assume hypotheses $(H_1)$, $(H_2)$, $(H'_4)$, $(H'_5)$ and $(H_6)$. Let $\Psi:\Mn\to\Mn$ be a Lipschitz continuous function. 
Then the problem
\begin{equation}\label{NH-model}
\begin{cases}
\A\dot\sigma +\Psi(\sigma)=Ev & \text{ in }\Omega\times (0,T), 
\\
-\Div\sigma= f & \text{ in }\Omega\times (0,T),
\\
v=\dot w &\text{ on }\Gamma_D\times (0,T),
\\
\sigma\nu=g & \text{ on }\Gamma_N\times (0,T),
\\
\sigma(0)=\sigma_0
\end{cases}
\end{equation}
has a unique solution $(\sigma, v)\in H^1(0,T; H_{\Div}(\Omega;\Mn))\times L^2(0,T; H^1(\Omega;\R^n))$.
\end{lemma}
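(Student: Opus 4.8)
The plan is to eliminate $v$ and rewrite \eqref{NH-model} as a Lipschitz Cauchy problem for $\sigma$ alone in the Hilbert space $L^2(\Omega;\Mn)$, to solve it by the Cauchy--Lipschitz theorem (in its Carath\'eodory form for Banach-space-valued ODEs with measurable-in-time, Lipschitz-in-state right-hand side), and to recover $v$ and the full regularity at the end.

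\textbf{Building the vector field.} First I would introduce the closed subspace
$$
\mathcal{V}:=\bigl\{\tau\in H_{\Div}(\Omega;\Mn):\ \Div\tau=0\text{ in }\Omega,\ \tau\nu=0\text{ on }\Gamma_N\bigr\}.
$$
By the integration-by-parts formula for $H_{\Div}$, $\mathcal{V}$ is precisely the orthogonal complement in $L^2(\Omega;\Mn)$ of $\overline{E(H^1_{\Gamma_D}(\Omega;\R^n))}$, and on $\mathcal{V}$ the $H_{\Div}$-norm reduces to the $L^2$-norm. Since by $(H'_5)$ the potential $\rho(t)$ is statically admissible for $(f(t),g(t))$, the equilibrium equation and the Neumann condition in \eqref{NH-model} are equivalent to $\sigma(t)-\rho(t)\in\mathcal{V}$, while $(H_6)$ gives the compatibility $\sigma_0-\rho(0)\in\mathcal{V}$; the constitutive equation together with $v-\dot w(t)\in H^1_{\Gamma_D}$ forces $\A\dot\sigma+\Psi(\sigma)-E\dot w(t)\in\mathcal{V}^\perp$. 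Writing $\dot\sigma=\dot\rho(t)+\zeta$ with $\zeta\in\mathcal{V}$, this amounts to: find $\zeta\in\mathcal{V}$ with $\int_\Omega\A\zeta\cdot\tau\,dx=\int_\Omega\bigl(E\dot w(t)-\Psi(\sigma)-\A\dot\rho(t)\bigr)\cdot\tau\,dx$ for all $\tau\in\mathcal{V}$. By $(H_1)$ the form $(\zeta,\tau)\mapsto\int_\Omega\A\zeta\cdot\tau\,dx$ is continuous and coercive on $\mathcal{V}\times\mathcal{V}$, so Lax--Milgram yields a unique $\zeta=:S\bigl(E\dot w(t)-\Psi(\sigma)-\A\dot\rho(t)\bigr)$, where $S:L^2(\Omega;\Mn)\to\mathcal{V}$ is bounded linear. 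Consequently \eqref{NH-model} reduces to the Cauchy problem
$$
\dot\sigma=F(t,\sigma):=\dot\rho(t)+S\bigl(E\dot w(t)-\A\dot\rho(t)\bigr)-S\bigl(\Psi(\sigma)\bigr),\qquad\sigma(0)=\sigma_0 .
$$

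\textbf{Solving the ODE and upgrading the regularity.} Since $S$ is bounded and $\Psi$ is Lipschitz, $F(t,\cdot)$ is Lipschitz with constant independent of $t$; since $w\in H^1(0,T;H^1(\Omega;\R^n))$ and $\rho\in H^1(0,T;H_{\Div}(\Omega;\Mn))$ by $(H'_4)$--$(H'_5)$, the map $t\mapsto F(t,0)$ belongs to $L^2(0,T;L^2(\Omega;\Mn))$. The Carath\'eodory version of the Cauchy--Lipschitz theorem (equivalently a contraction in $C([0,T];L^2(\Omega;\Mn))$ with a weighted sup-norm) then produces a unique solution $\sigma\in H^1(0,T;L^2(\Omega;\Mn))$, the $H^1$-in-time regularity following from $\dot\sigma(t)=F(t,\sigma(t))$ and $\|F(t,\sigma(t))\|_{L^2}\le\|F(t,0)\|_{L^2}+L\sup_{[0,T]}\|\sigma\|_{L^2}\in L^2(0,T)$. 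Because $\dot\sigma-\dot\rho=S(\cdot)\in\mathcal{V}$ for a.e.\ $t$ and $\sigma_0-\rho(0)\in\mathcal{V}$, integration gives $\sigma(t)-\rho(t)\in\mathcal{V}$ for every $t$; as the $H_{\Div}$- and $L^2$-norms coincide on $\mathcal{V}$, this upgrades to $\sigma\in H^1(0,T;H_{\Div}(\Omega;\Mn))$, and $-\Div\sigma=f$, $\sigma\nu=g$ on $\Gamma_N$ hold by construction.

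\textbf{Recovering $v$, uniqueness, and the main difficulty.} The function $h(t):=\A\dot\sigma(t)+\Psi(\sigma(t))-E\dot w(t)$ lies in $\mathcal{V}^\perp=\overline{E(H^1_{\Gamma_D})}$ by the defining property of $S$; by Korn's inequality $E(H^1_{\Gamma_D})$ is closed in $L^2(\Omega;\Mn)$ (and $E$ is injective there, $\Gamma_D$ being non-negligible as in the quasi-static setting), so there is a unique $\psi(t)\in H^1_{\Gamma_D}(\Omega;\R^n)$ with $E\psi(t)=h(t)$ and $\|\psi(t)\|_{H^1}\le C\|h(t)\|_{L^2}$. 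Then $v:=\psi+\dot w\in L^2(0,T;H^1(\Omega;\R^n))$ satisfies $v=\dot w$ on $\Gamma_D$ and $\A\dot\sigma+\Psi(\sigma)=Ev$, so $(\sigma,v)$ solves \eqref{NH-model}. For uniqueness, any solution stress lies in $\rho(t)+\mathcal{V}$ and its constitutive equation forces $\dot\sigma$ to satisfy the same Lipschitz ODE, hence $\sigma$ is unique by Gr\"onwall, and $v$ is then unique by injectivity of $E$ on $H^1_{\Gamma_D}$. The step I expect to be the real work is the first one: pinning down the orthogonal splitting $L^2(\Omega;\Mn)=\mathcal{V}\oplus\overline{E(H^1_{\Gamma_D})}$, using the $\A$-coercive problem on $\mathcal{V}$ to build $F$, and then leveraging the special structure of $\mathcal{V}$ (fixed divergence, prescribed normal trace on $\Gamma_N$) together with Korn's inequality to lift the $L^2$-valued ODE solution to the claimed $H^1(0,T;H_{\Div}(\Omega;\Mn))$-regularity and to reconstruct $v$ with $H^1$ spatial regularity.
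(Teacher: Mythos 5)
Your proposal is correct and follows essentially the same route as the paper: both reduce \eqref{NH-model} to a Lipschitz Cauchy problem in the space of divergence-free stresses with vanishing normal trace on $\Gamma_N$ (your $\mathcal V$ is the paper's $\Sigma_0(\Omega)$), solve it by Cauchy--Lipschitz, and recover $v$ from the orthogonality of the residual to that subspace via Korn's inequality. Your Lax--Milgram operator $S$ is exactly the paper's $\A$-orthogonal projection $\Pi_0$ composed with $\A^{-1}$, so the two formulations of the ODE coincide.
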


\begin{proof}
On $L^2(\Omega;\Mn)$ we consider the scalar product
\begin{equation}\label{scpr}
\langle\sigma,\tau\rangle_{\A}:=\int_\Om \A\sigma\cdot\tau\, dx \quad \text{for every }\sigma,\tau\in L^2(\Omega;\Mn),
\end{equation}
which is topologically equivalent to the standard scalar product by \eqref{coercA}. We introduce the set
$$
\Sigma_0(\Omega):=\{ \sigma\in H_{\Div}(\Omega;\Mn) :\ \Div\sigma=0 \text{ in } \Omega,  \;\;\sigma\cdot \nu=0 \text{ on }\Gamma_N\},
$$
which is a closed subspace of $L^2(\Omega;\Mn)$, and we denote the projection onto $\Sigma_0(\Omega)$ with respect to the scalar product \eqref{scpr} by $\Pi_0$.

We consider the following problem:  find $\theta\in H^1(0,T; \Sigma_0(\Omega))$ such that
\begin{equation}\label{Cauchy-pb-L}
\begin{cases}
 \Pi_0(\dot \rho(t)+\dot\theta(t))+ \Pi_0\big(\A^{-1}\Psi( \rho(t)+\theta(t))\big)=\Pi_0\big(\A^{-1}E\dot w(t)\big) & \text{ in }\Sigma_0(\Omega),
\\
\theta(0)=\sigma_0-\rho(0).
\end{cases}
\end{equation}
Let 
$$
\Lambda:[0,T]\times\Sigma_0(\Omega)\to\Sigma_0(\Omega)
$$
given by
$$
\Lambda(t,\vartheta):= \Pi_0(\dot \rho(t))+\Pi_0\big(\A^{-1}\Psi( \rho(t)+\vartheta)\big)-\Pi_0\big(\A^{-1}E\dot w(t)\big).
$$
Since $\Lambda(t,\cdot)$ is Lipschitz continuous for a.e.\ $t\in[0,T]$ and $\Lambda(\cdot, \vartheta)\in L^2(0,T;\Sigma_0(\Omega))$ for every $\vartheta\in\Sigma_0(\Omega)$, existence and uniqueness of solutions to problem \eqref{Cauchy-pb-L} follow from the Cauchy-Lipschitz Theorem. Now, the first equation in \eqref{Cauchy-pb-L} implies that
$$
\langle \dot \rho(t)+\dot\theta(t),\tau\rangle_{\A}
+\langle \A^{-1}\Psi( \rho(t)+\theta(t)), \tau\rangle_{\A}
= \langle \A^{-1} E\dot w(t), \tau\rangle_{\A}
$$
for every $\tau\in \Sigma_0(\Omega)$, that is,
$$
\int_\Om \left(\A( \dot \rho(t)+\dot\theta(t))+ \Psi( \rho(t)+\theta(t))- E\dot w(t)\right)\cdot\tau\, dx =0
$$
for every $\tau\in \Sigma_0(\Omega)$. This implies that for a.e.\ $t\in[0,T]$ there exists a unique
 $z(t)\in H^1(\Omega;\R^n)$ with $z(t)=0$ on $\Gamma_D$ such that
$$
\A( \dot \rho(t)+\dot\theta(t))+ \Psi( \rho(t)+\theta(t)) - E\dot w(t) 
= Ez(t).
$$
We set $\sigma(t):= \rho(t)+\theta(t)$ and $v(t):= z(t)+\dot w(t)$. We observe that $v\in L^2(0,T; H^1(\Omega;\R^n))$ by construction.
Thus, we have found a pair $(\sigma, v)$ satisfying \eqref{NH-model}.
\par
On the other hand, if $(\sigma, v)$ is a solution to \eqref{NH-model}, then $\theta(t):=\sigma(t)-\rho(t)$ satisfies \eqref{Cauchy-pb-L}
and is therefore uniquely determined. Uniqueness of $v$ then follows.
\end{proof}

We conclude this section by showing that the Hosford criterion fits in with the assumptions of our regularity results.

\begin{proposition}\label{prop:Hosford}
Let $p \geq 2$. Let $F: \MD \to [0,+\infty)$ be the function defined by
$$F(\sigma)=\sum_{1 \leq i < j  \leq n}|\sigma_i-\sigma_j|^p \quad \text{ for all } \sigma\in \MD,$$
where $\sigma_1,\ldots,\sigma_n$ are the eigenvalues of $\sigma$, and let
$$K=\big\{\sigma \in \MD : \quad F(\sigma) \leq 1\big\}.$$
Then $K$ is a compact and convex subset of $\MD$. Moreover, its boundary $\partial K$ is a $C^2$ hypersurface and its second fundamental form is positive definite at every point of $\partial K$.
\end{proposition}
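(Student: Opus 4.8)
The plan is to realise $F$ as a spectral function and to reduce each of the four assertions to an elementary property of the symmetric convex function $f(x):=\sum_{1\le i<j\le n}|x_i-x_j|^p$ on $\R^n$ and of the eigenvalue map $\lambda\colon\MD\to\{x\in\R^n:\ x_1+\dots+x_n=0\}$, so that $F=f\circ\lambda$. Since $p\ge2$, the function $t\mapsto|t|^p$ is convex and of class $C^2$ on $\R$ (second derivative $p(p-1)|t|^{p-2}$), hence $f$ is convex and $C^2$ on all of $\R^n$; by the classical theory of spectral functions (Davis' theorem for convexity, and the Lewis--Sendov theorem on twice differentiable spectral functions) $F$ is then convex and $C^2$ on $\Mn$, and in particular on the subspace $\MD$. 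Convexity of $F$ gives convexity of $K=\{F\le1\}$. Compactness follows from coercivity: if $\sigma\in\MD$ has eigenvalues $x$, then $\sum_{i<j}(x_i-x_j)^2=n\,|\sigma|^2$ because $x_1+\dots+x_n=0$, and since $p\ge2$ the norm inequality $\|y\|_2\le \binom{n}{2}^{1/2-1/p}\|y\|_p$ on $\R^{\binom{n}{2}}$ yields $F(\sigma)=\sum_{i<j}|x_i-x_j|^p\ge c_{n,p}\,|\sigma|^p$ with $c_{n,p}>0$; thus $K$ is bounded, and it is closed as a sublevel set of a continuous function, hence compact. Finally $F(0)=0<1$ and $F$ is continuous, so $0$ lies in the interior of $K$, so \eqref{boundK} holds. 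When $p=2$ one has $F(\sigma)=n|\sigma|^2$, so $K$ is a ball and the remaining claims are trivial; \emph{from now on assume $p>2$}.

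To study $\partial K=\{F=1\}$, observe that $F$ is convex with $\min F=F(0)=0$, attained only at $0$: indeed $F(\sigma)=0$ forces all eigenvalues of $\sigma$ equal, i.e.\ $\sigma$ proportional to the identity, hence $\sigma=0$ in $\MD$. Consequently no point of $\partial K$ is a minimiser, and since every critical point of a convex function is a minimiser, $\nabla F\ne0$ on $\partial K$. By the implicit function theorem $\partial K$ is a $C^2$ hypersurface, and at each $\sigma\in\partial K$ its second fundamental form coincides, up to the positive factor $|\nabla F(\sigma)|^{-1}$, with the restriction of the quadratic form $v\mapsto D^2F(\sigma)[v,v]$ to the tangent space $T_\sigma(\partial K)=\ker DF(\sigma)$. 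Hence the proposition reduces to the single statement: for every $\sigma\in\partial K$, the bilinear form $D^2F(\sigma)$ is \emph{positive definite} on $\ker DF(\sigma)\subset\MD$.

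This is the core of the argument. Diagonalise $\sigma=U\,{\rm diag}(x)\,U^T$ with $x=\lambda(\sigma)$, so $x_1+\dots+x_n=0$ and $x$ is not a multiple of $\mathbf 1$ (because $F(\sigma)=1$); for $H\in\Mn$ write $\widetilde H:=U^THU$ and $\widetilde h:={\rm diag}(\widetilde H)\in\R^n$. The Lewis--Sendov Hessian formula for spectral functions reads
\[
D^2F(\sigma)[H,H]=\big\langle D^2 f(x)\,\widetilde h,\ \widetilde h\big\rangle+\sum_{i<j}b_{ij}(x)\,\widetilde H_{ij}^{\,2},\qquad b_{ij}(x):=\frac{\partial_i f(x)-\partial_j f(x)}{x_i-x_j},
\]
where $b_{ij}$ is read as the corresponding limit (a directional second derivative) when $x_i=x_j$; both summands are nonnegative because $f$ is convex. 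Two short computations upgrade this to strict positivity in the relevant cases. First, $D^2 f(x)[v,v]=p(p-1)\sum_{i<j}|x_i-x_j|^{p-2}(v_i-v_j)^2$, which for $v$ with $v_1+\dots+v_n=0$ vanishes only if $v_i=v_j$ whenever $x_i\ne x_j$; since $x$ takes at least two distinct values, the graph on $\{1,\dots,n\}$ with an edge between $i$ and $j$ exactly when $x_i\ne x_j$ is connected (it is the complete multipartite graph associated with the level sets of $x$), which forces $v$ constant, hence $v=0$ on the hyperplane --- so $D^2f(x)$ is positive definite there. Second, setting $\psi(t):=|t|^{p-2}t$ (nondecreasing for $p>2$) one gets $b_{ij}(x)=2p\,|x_i-x_j|^{p-2}+p\sum_{l\ne i,j}\frac{\psi(x_i-x_l)-\psi(x_j-x_l)}{x_i-x_j}$, a sum of nonnegative terms; if $x_i\ne x_j$ this is $\ge 2p|x_i-x_j|^{p-2}>0$, and if $x_i=x_j$ it equals $p(p-1)\sum_{l\ne i,j}|x_i-x_l|^{p-2}$, which is positive unless all the $x_l$ equal $x_i$, i.e.\ unless $x=0$; so $b_{ij}(x)>0$ for all $i\ne j$. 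Now take $0\ne H\in\ker DF(\sigma)$; since $DF(\sigma)\cdot H=\sum_i\partial_i f(x)\,\widetilde h_i$, tangency means $\sum_i\partial_i f(x)\,\widetilde h_i=0$, and $\widetilde h_1+\dots+\widetilde h_n={\rm tr}\,H=0$. If $\widetilde h\ne0$, the first term in the Hessian formula is $>0$ by the first computation (the second being $\ge0$); if $\widetilde h=0$, then $\widetilde H$ is a nonzero symmetric matrix with zero diagonal, so $\widetilde H_{ij}\ne0$ for some $i<j$, and the second term is $\ge b_{ij}(x)\,\widetilde H_{ij}^{\,2}>0$ by the second computation. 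In either case $D^2F(\sigma)[H,H]>0$, which is precisely the positive definiteness of the second fundamental form of $\partial K$ at $\sigma$.

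The main obstacle is this last step: having at hand the correct second-order formula for the spectral function $F$ --- including the limiting meaning of the divided differences $b_{ij}$ along coincident eigenvalues --- and organising the case distinction so that the tangency constraint $DF(\sigma)\cdot H=0$ is matched against the two nonnegative blocks of $D^2F(\sigma)$; the two auxiliary computations on $D^2 f$ and on $b_{ij}$ are then routine. One should also note, though it is immediate, that it suffices to test $D^2F(\sigma)$ against $H\in\MD$: $\MD$ is a linear subspace, ${\rm tr}\,\widetilde H={\rm tr}\,H$, and the eigenvalue hyperplane $\{x_1+\dots+x_n=0\}$ is exactly $\lambda(\MD)$, so no ambient-space correction arises.
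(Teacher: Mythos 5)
Your proof is correct, and it shares the paper's overall skeleton (write $F$ as a symmetric $C^2$ function of the eigenvalues, get convexity and compactness of $K$, show $DF\neq 0$ on $\{F=1\}$, and identify the second fundamental form with $|DF|^{-1}D^2F$ restricted to the tangent space), but it differs in how the two key technical inputs are obtained. The paper cites Ball's theorem for the $C^2$ regularity of the spectral function and, crucially, quotes the appendix of Huang--Man for the positive definiteness of $D^2F(\sigma)$ on $\MD$, and it gets $DF\neq 0$ on $\partial K$ from Euler's identity for the $p$-homogeneous $F$; you instead invoke Davis and Lewis--Sendov for convexity and $C^2$ smoothness, replace the homogeneity argument by the observation that a critical point of a convex function is a global minimiser (so $DF\neq0$ where $F=1>0=\min F$), and, most substantially, you prove the Hessian positivity yourself via the Lewis--Sendov formula $D^2F(\sigma)[H,H]=\langle D^2f(x)\widetilde h,\widetilde h\rangle+\sum b_{ij}\widetilde H_{ij}^2$, with the connectedness argument for the ``distinctness graph'' handling the diagonal block and the explicit lower bound $b_{ij}>0$ (using $x\neq0$) handling the off-diagonal block; note that your case analysis in fact never uses the tangency constraint, so you actually reprove the cited Huang--Man statement that $D^2F(\sigma)$ is positive definite on all of $\MD$ at every $\sigma\in\partial K$. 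The trade-off is clear: the paper's argument is shorter but leans on two external references, while yours is self-contained (including the quantitative coercivity bound $F(\sigma)\ge c_{n,p}|\sigma|^p$ for compactness and the clean reduction to the trace-free hyperplane) at the price of importing the spectral Hessian formula; the separate treatment of $p=2$ is a harmless simplification.
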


\begin{proof}
Let us define the function $f:\R^n \to [0,+\infty)$ by
$$f(x_1,\ldots,x_n)=\sum_{1 \leq i < j \leq n} |x_i-x_j|^p \quad \text{ for all } (x_1,\ldots, x_n) \in \R^n.$$
Clearly $f$ is symmetric, i.e., for any permutation $\varphi:\mathbb N^n \to \mathbb N^n$ 
$$f(x_1,\ldots,x_n)=f(x_{\varphi(1)}, \ldots, x_{\varphi(n)})\quad \text{ for all }(x_1,\ldots,x_n) \in \R^n.$$
By definition of $F$, for every $\sigma\in \MD$ we have that $F(\sigma)=f(\sigma_1,\ldots,\sigma_n)$, where $\sigma_1,\ldots,\sigma_n$ are the eigenvalues of $\sigma$. Since $f \in C^2(\R^n)$,  \cite[Theorem 5.5]{Ball} shows that $F \in C^2(\MD)$.  
Moreover, by \cite[Appendix A]{HM} we get that 
\begin{equation}
\label{eq:Fconvex}
D^2F(\sigma)\xi \cdot \xi >0 \quad \text{ for every } \xi\in \MD,
\end{equation}
so that $F$ is strictly convex on $\MD$. We infer that $K$ is a closed and convex set and, since it is bounded, it is also compact. 

Let us turn now to the regularity properties of $\partial K$.
Since $F$ is $p$-homogeneous, it follows from the Euler formula that
$$p F(\sigma)=DF(\sigma) \cdot \sigma \quad \text{ for all } \sigma \in \MD.$$
As a consequence, since $F=1$ on $\partial K$, there is an open neighborhood $U$ of $\partial K$ such that $DF \neq 0$ in $U$. In particular, $\partial K=\{F=1\}$ is a co-dimension $1$ submanifold of $\MD$ of class $C^2$. 

Let us introduce the Gauss map $N:U \to \MD$ defined by
$$N(\sigma)=\frac{DF(\sigma)}{|DF(\sigma)|} \quad \text{ for all } \sigma \in U.$$
Then, $N$ is of class $C^1$ in $U$ with
$$DN(\sigma)=\frac{D^2F(\sigma)}{|DF(\sigma)|} - \frac{DF(\sigma) \otimes [D^2F(\sigma) DF(\sigma)]}{|DF(\sigma)|^3} \quad \text{ for all } \sigma \in U.$$
In particular, for all $\xi \in \MD$ we have
$$DN(\sigma)\xi\cdot \xi=\frac{D^2F(\sigma)\xi \cdot \xi}{|DF(\sigma)|} - \frac{DF(\sigma)\cdot \xi}{|DF(\sigma)|^3}\big(D^2F(\sigma) DF(\sigma)\cdot \xi\big).$$
Using that the tangent space $T_\sigma(\partial K)$ to $\partial K$ at $\sigma \in \partial K$ is given by $T_\sigma(\partial K)=DF(\sigma)^\perp$, we deduce that for all $\xi \in T_\sigma(\partial K)$ 
$$DN(\sigma)\xi\cdot \xi=\frac{D^2F(\sigma)\xi\cdot \xi}{|DF(\sigma)|}.$$
By \eqref{eq:Fconvex} we infer that that $DN(\sigma)\xi\cdot \xi>0$ for all $\sigma \in \partial K$ and all $\xi \in T_\sigma(\partial K)$. As a consequence, the second fundamental form $A(\sigma)$ of $\partial K$ is positive definite for all $\sigma \in \partial K$.
\end{proof}

\vskip20pt\noindent
\textbf{Acknowledgements.}
JFB is supported by a public grant from the Fondation Math\'ematique Jacques Hadamard. AG and MGM acknowledge support by PRIN 2022 n.2022J4FYNJ funded by MUR, Italy, and by the European Union -- Next Generation EU.  AG and MGM are members of GNAMPA--INdAM.
\bigskip

\end{document}